\newtheorem{theorem}{Theorem}[section]
\newtheorem{proposition}[theorem]{Proposition}
\newtheorem{lemma}[theorem]{Lemma}
\newtheorem{corollary}[theorem]{Corollary}
\newenvironment{claim}{\smallskip\noindent \textbf{Claim.}\rm}{\smallskip}
\newenvironment{remark}{\smallskip\noindent \textbf{Remark.}\rm}{\smallskip}
\newenvironment{definition}{\smallskip\noindent \textbf{Definition.}\rm}{\smallskip}
\newenvironment{acknowledgment}{\smallskip{\sc Acknowledgments.}\rm}{\smallskip}
\renewenvironment{proof}[1][Proof]{\textbf{#1.} }{\ \rule{0.5em}{0.5em}}
\renewcommand{\theequation}{\thesection.\arabic{equation}}
\newenvironment{notation}{\smallskip{\sc Notation.}\rm}{\smallskip}
\numberwithin{equation}{section}
\let\pdfoutput=\undefined\fi
\chardef\@x10\chardef\@xv60
\def\tcitime{
\def\@time{%
  \@minute\time\@hour\@minute\divide\@hour\@xv
  \ifnum\@hour<\@x 0\fi\the\@hour:%
  \multiply\@hour\@xv\advance\@minute-\@hour
  \ifnum\@minute<\@x 0\fi\the\@minute
  }}%
\def\x@hyperref#1#2#3{%
   \catcode`\~ = 12
   \catcode`\$ = 12
   \catcode`\_ = 12
   \catcode`\# = 12
   \catcode`\& = 12
   \y@hyperref{#1}{#2}{#3}%
}
\def\y@hyperref#1#2#3#4{%
   #2\ref{#4}#3
   \catcode`\~ = 13
   \catcode`\$ = 3
   \catcode`\_ = 8
   \catcode`\# = 6
   \catcode`\& = 4
}
\def\QCTOpt[#1]#2{%
  \def\QCTOptB{#1}
  \def\QCTOptA{#2}
}
\def\QCTNOpt#1{%
  \def\QCTOptA{#1}
  \let\QCTOptB\empty
}
\def\Qct{%
  \@ifnextchar[{%
    \QCTOpt}{\QCTNOpt}
}
\def\QCBOpt[#1]#2{%
  \def\QCBOptB{#1}%
  \def\QCBOptA{#2}%
}
\def\QCBNOpt#1{%
  \def\QCBOptA{#1}%
  \let\QCBOptB\empty
}
\def\Qcb{%
  \@ifnextchar[{%
    \QCBOpt}{\QCBNOpt}%
}
\def\PrepCapArgs{%
  \ifx\QCBOptA\empty
    \ifx\QCTOptA\empty
      {}%
    \else
      \ifx\QCTOptB\empty
        {\QCTOptA}%
      \else
        [\QCTOptB]{\QCTOptA}%
      \fi
    \fi
  \else
    \ifx\QCBOptA\empty
      {}%
    \else
      \ifx\QCBOptB\empty
        {\QCBOptA}%
      \else
        [\QCBOptB]{\QCBOptA}%
      \fi
    \fi
  \fi
}
\def\GRAPHICSPS#1{%
 \ifcase\GRAPHICSTYPE
   \special{ps: #1}%
 \or
   \special{language "PS", include "#1"}%
 \fi
}%
\def\graffile#1#2#3#4{%
    \bgroup
	   \@inlabelfalse
       \leavevmode
       \@ifundefined{bbl@deactivate}{\def~{\string~}}{\activesoff}%
        \raise -#4 \BOXTHEFRAME{%
           \hbox to #2{\raise #3\hbox to #2{\null #1\hfil}}}%
    \egroup
}%
\def\draftbox#1#2#3#4{%
 \leavevmode\raise -#4 \hbox{%
  \frame{\rlap{\protect\tiny #1}\hbox to #2%
   {\vrule height#3 width\z@ depth\z@\hfil}%
  }%
 }%
}%
\let\nographics=\@msidraft
\newif\ifwasdraft
\def\GRAPHIC#1#2#3#4#5{%
   \ifnum\@msidraft=\@ne\draftbox{#2}{#3}{#4}{#5}%
   \else\graffile{#1}{#3}{#4}{#5}%
   \fi
}
\def\addtoLaTeXparams#1{%
    \edef\LaTeXparams{\LaTeXparams #1}}%
\newif\ifBoxFrame \BoxFramefalse
\newif\ifOverFrame \OverFramefalse
\newif\ifUnderFrame \UnderFramefalse
\def\BOXTHEFRAME#1{%
   \hbox{%
      \ifBoxFrame
         \frame{#1}%
      \else
         {#1}%
      \fi
   }%
}
\def\doFRAMEparams#1{\BoxFramefalse\OverFramefalse\UnderFramefalse\readFRAMEparams#1\end}%
\def\readFRAMEparams#1{%
 \ifx#1\end%
  \let\next=\relax
  \else
  \ifx#1i\dispkind=\z@\fi
  \ifx#1d\dispkind=\@ne\fi
  \ifx#1f\dispkind=\tw@\fi
  \ifx#1t\addtoLaTeXparams{t}\fi
  \ifx#1b\addtoLaTeXparams{b}\fi
  \ifx#1p\addtoLaTeXparams{p}\fi
  \ifx#1h\addtoLaTeXparams{h}\fi
  \ifx#1X\BoxFrametrue\fi
  \ifx#1O\OverFrametrue\fi
  \ifx#1U\UnderFrametrue\fi
  \ifx#1w
    \ifnum\@msidraft=1\wasdrafttrue\else\wasdraftfalse\fi
    \@msidraft=\@ne
  \fi
  \let\next=\readFRAMEparams
  \fi
 \next
 }%
\def\IFRAME#1#2#3#4#5#6{%
      \bgroup
      \let\QCTOptA\empty
      \let\QCTOptB\empty
      \let\QCBOptA\empty
      \let\QCBOptB\empty
      #6%
      \parindent=0pt
      \leftskip=0pt
      \rightskip=0pt
      \setbox0=\hbox{\QCBOptA}%
      \@tempdima=#1\relax
      \ifOverFrame
          \typeout{This is not implemented yet}%
          \show\HELP
      \else
         \ifdim\wd0>\@tempdima
            \advance\@tempdima by \@tempdima
            \ifdim\wd0 >\@tempdima
               \setbox1 =\vbox{%
                  \unskip\hbox to \@tempdima{\hfill\GRAPHIC{#5}{#4}{#1}{#2}{#3}\hfill}%
                  \unskip\hbox to \@tempdima{\parbox[b]{\@tempdima}{\QCBOptA}}%
               }%
               \wd1=\@tempdima
            \else
               \textwidth=\wd0
               \setbox1 =\vbox{%
                 \noindent\hbox to \wd0{\hfill\GRAPHIC{#5}{#4}{#1}{#2}{#3}\hfill}\\%
                 \noindent\hbox{\QCBOptA}%
               }%
               \wd1=\wd0
            \fi
         \else
            \ifdim\wd0>0pt
              \hsize=\@tempdima
              \setbox1=\vbox{%
                \unskip\GRAPHIC{#5}{#4}{#1}{#2}{0pt}%
                \break
                \unskip\hbox to \@tempdima{\hfill \QCBOptA\hfill}%
              }%
              \wd1=\@tempdima
           \else
              \hsize=\@tempdima
              \setbox1=\vbox{%
                \unskip\GRAPHIC{#5}{#4}{#1}{#2}{0pt}%
              }%
              \wd1=\@tempdima
           \fi
         \fi
         \@tempdimb=\ht1
         \advance\@tempdimb by -#2
         \advance\@tempdimb by #3
         \leavevmode
         \raise -\@tempdimb \hbox{\box1}%
      \fi
      \egroup%
}%
\def\DFRAME#1#2#3#4#5{%
  \vspace\topsep
  \hfil\break
  \bgroup
     \leftskip\@flushglue
	 \rightskip\@flushglue
	 \parindent\z@
	 \parfillskip\z@skip
     \let\QCTOptA\empty
     \let\QCTOptB\empty
     \let\QCBOptA\empty
     \let\QCBOptB\empty
	 \vbox\bgroup
        \ifOverFrame 
           #5\QCTOptA\par
        \fi
        \GRAPHIC{#4}{#3}{#1}{#2}{\z@}%
        \ifUnderFrame 
           \break#5\QCBOptA
        \fi
	 \egroup
  \egroup
  \vspace\topsep
  \break
}%
\def\FFRAME#1#2#3#4#5#6#7{%
  \@ifundefined{floatstyle}
    {
     \begin{figure}[#1]%
    }
    {
	 \ifx#1h
      \begin{figure}[H]%
	 \else
      \begin{figure}[#1]%
	 \fi
	}
  \let\QCTOptA\empty
  \let\QCTOptB\empty
  \let\QCBOptA\empty
  \let\QCBOptB\empty
  \ifOverFrame
    #4
    \ifx\QCTOptA\empty
    \else
      \ifx\QCTOptB\empty
        \caption{\QCTOptA}%
      \else
        \caption[\QCTOptB]{\QCTOptA}%
      \fi
    \fi
    \ifUnderFrame\else
      \label{#5}%
    \fi
  \else
    \UnderFrametrue%
  \fi
  \begin{center}\GRAPHIC{#7}{#6}{#2}{#3}{\z@}\end{center}%
  \ifUnderFrame
    #4
    \ifx\QCBOptA\empty
      \caption{}%
    \else
      \ifx\QCBOptB\empty
        \caption{\QCBOptA}%
      \else
        \caption[\QCBOptB]{\QCBOptA}%
      \fi
    \fi
    \label{#5}%
  \fi
  \end{figure}%
 }%
\def\makeactives{
  \catcode`\"=\active
  \catcode`\;=\active
  \catcode`\:=\active
  \catcode`\'=\active
  \catcode`\~=\active
}
   \gdef\activesoff{%
      \def"{\string"}%
      \def;{\string;}%
      \def:{\string:}%
      \def'{\string'}%
      \def~{\string~}%
    }
\def\FRAME#1#2#3#4#5#6#7#8{%
 \bgroup
 \ifnum\@msidraft=\@ne
   \wasdrafttrue
 \else
   \wasdraftfalse%
 \fi
 \def\LaTeXparams{}%
 \dispkind=\z@
 \def\LaTeXparams{}%
 \doFRAMEparams{#1}%
 \ifnum\dispkind=\z@\IFRAME{#2}{#3}{#4}{#7}{#8}{#5}\else
  \ifnum\dispkind=\@ne\DFRAME{#2}{#3}{#7}{#8}{#5}\else
   \ifnum\dispkind=\tw@
    \edef\@tempa{\noexpand\FFRAME{\LaTeXparams}}%
    \@tempa{#2}{#3}{#5}{#6}{#7}{#8}%
    \fi
   \fi
  \fi
  \ifwasdraft\@msidraft=1\else\@msidraft=0\fi{}%
  \egroup
 }%
\def\TEXUX#1{"texux"}
\def\func#1{\mathop{\rm #1}\nolimits}%
\long\def\QQQ#1#2{%
     \long\expandafter\def\csname#1\endcsname{#2}}%
\long\def\QQA#1#2{}%
\def\QTR#1#2{{\csname#1\endcsname {#2}}}%
\def\EXPAND#1[#2]#3{}%
\def\NOEXPAND#1[#2]#3{}%
\def\LaTeXparent#1{}%
\def\ChildStyles#1{}%
\def\ChildDefaults#1{}%
\def\QTagDef#1#2#3{}%
  \providecommand{\UNICODE}[2][]{\protect\rule{.1in}{.1in}}
  \providecommand{\U}[1]{\protect\rule{.1in}{.1in}}
\def\QQfnmark#1{\footnotemark}
 \def\abstract{%
  \if@twocolumn
   \section*{Abstract (Not appropriate in this style!)}%
   \else \small 
   \begin{center}{\bf Abstract\vspace{-.5em}\vspace{\z@}}\end{center}%
   \quotation 
   \fi
  }%
   \def\registered{\relax\ifmmode{}\r@gistered
                    \else$\m@th\r@gistered$\fi}%
 \def\r@gistered{^{\ooalign
  {\hfil\raise.07ex\hbox{$\scriptstyle\rm\text{R}$}\hfil\crcr
  \mathhexbox20D}}}}{}%
\newdimen\theight
\def\newfmtname{LaTeX2e}
  \DeclareOldFontCommand{\rm}{\normalfont\rmfamily}{\mathrm}
  \DeclareOldFontCommand{\sf}{\normalfont\sffamily}{\mathsf}
  \DeclareOldFontCommand{\tt}{\normalfont\ttfamily}{\mathtt}
  \DeclareOldFontCommand{\bf}{\normalfont\bfseries}{\mathbf}
  \DeclareOldFontCommand{\it}{\normalfont\itshape}{\mathit}
  \DeclareOldFontCommand{\sl}{\normalfont\slshape}{\@nomath\sl}
  \DeclareOldFontCommand{\sc}{\normalfont\scshape}{\@nomath\sc}
\def\alpha{{\Greekmath 010B}}%
\def\beta{{\Greekmath 010C}}%
\def\gamma{{\Greekmath 010D}}%
\def\delta{{\Greekmath 010E}}%
\def\epsilon{{\Greekmath 010F}}%
\def\zeta{{\Greekmath 0110}}%
\def\eta{{\Greekmath 0111}}%
\def\theta{{\Greekmath 0112}}%
\def\iota{{\Greekmath 0113}}%
\def\kappa{{\Greekmath 0114}}%
\def\lambda{{\Greekmath 0115}}%
\def\mu{{\Greekmath 0116}}%
\def\nu{{\Greekmath 0117}}%
\def\xi{{\Greekmath 0118}}%
\def\pi{{\Greekmath 0119}}%
\def\rho{{\Greekmath 011A}}%
\def\sigma{{\Greekmath 011B}}%
\def\tau{{\Greekmath 011C}}%
\def\upsilon{{\Greekmath 011D}}%
\def\phi{{\Greekmath 011E}}%
\def\chi{{\Greekmath 011F}}%
\def\psi{{\Greekmath 0120}}%
\def\omega{{\Greekmath 0121}}%
\def\varepsilon{{\Greekmath 0122}}%
\def\vartheta{{\Greekmath 0123}}%
\def\varpi{{\Greekmath 0124}}%
\def\varrho{{\Greekmath 0125}}%
\def\varsigma{{\Greekmath 0126}}%
\def\varphi{{\Greekmath 0127}}%
\def\nabla{{\Greekmath 0272}}
\def\FindBoldGroup{%
   {\setbox0=\hbox{$\mathbf{x\global\edef\theboldgroup{\the\mathgroup}}$}}%
}
\def\Greekmath#1#2#3#4{%
    \if@compatibility
        \ifnum\mathgroup=\symbold
           \mathchoice{\mbox{\boldmath$\displaystyle\mathchar"#1#2#3#4$}}%
                      {\mbox{\boldmath$\textstyle\mathchar"#1#2#3#4$}}%
                      {\mbox{\boldmath$\scriptstyle\mathchar"#1#2#3#4$}}%
                      {\mbox{\boldmath$\scriptscriptstyle\mathchar"#1#2#3#4$}}%
        \else
           \mathchar"#1#2#3#4%
        \fi 
    \else 
        \FindBoldGroup
        \ifnum\mathgroup=\theboldgroup 
           \mathchoice{\mbox{\boldmath$\displaystyle\mathchar"#1#2#3#4$}}%
                      {\mbox{\boldmath$\textstyle\mathchar"#1#2#3#4$}}%
                      {\mbox{\boldmath$\scriptstyle\mathchar"#1#2#3#4$}}%
                      {\mbox{\boldmath$\scriptscriptstyle\mathchar"#1#2#3#4$}}%
        \else
           \mathchar"#1#2#3#4%
        \fi     	    
	  \fi}
\newif\ifGreekBold  \GreekBoldfalse
\let\SAVEPBF=\pbf
\def\pbf{\GreekBoldtrue\SAVEPBF}%
  \newcounter{equationnumber}  
  \def\mathletters{%
     \addtocounter{equation}{1}
     \edef\@currentlabel{\theequation}%
     \setcounter{equationnumber}{\c@equation}
     \setcounter{equation}{0}%
     \edef\theequation{\@currentlabel\noexpand\alph{equation}}%
  }
    \def\BibTeX{{\rm B\kern-.05em{\sc i\kern-.025em b}\kern-.08em
                 T\kern-.1667em\lower.7ex\hbox{E}\kern-.125emX}}}{}%
\def\AmS{{\protect\usefont{OMS}{cmsy}{m}{n}%
                A\kern-.1667em\lower.5ex\hbox{M}\kern-.125emS}}}{}%
\def\@@eqncr{\let\@tempa\relax
    \ifcase\@eqcnt \def\@tempa{& & &}\or \def\@tempa{& &}%
      \else \def\@tempa{&}\fi
     \@tempa
     \if@eqnsw
        \iftag@
           \@taggnum
        \else
           \@eqnnum\stepcounter{equation}%
        \fi
     \fi
     \global\tag@false
     \global\@eqnswtrue
     \global\@eqcnt\z@\cr}
\def\TCItag{\@ifnextchar*{\@TCItagstar}{\@TCItag}}
\def\@TCItag#1{%
    \global\tag@true
    \global\def\@taggnum{(#1)}%
    \global\def\@currentlabel{#1}}
\def\@TCItagstar*#1{%
    \global\tag@true
    \global\def\@taggnum{#1}%
    \global\def\@currentlabel{#1}}
\def\ExitTCILatex{\makeatother }
\if@compatibility\message{amsmath already loaded}\fi\aftergroup\ExitTCILatex}
\if@compatibility\message{amstex already loaded}\fi\aftergroup\ExitTCILatex}
\if@compatibility\message{amsgen already loaded}\fi\aftergroup\ExitTCILatex}
\let\DOTSI\relax
\def\RIfM@{\relax\ifmmode}%
\def\FN@{\futurelet\next}%
\def\iint{\DOTSI\intno@\tw@\FN@\ints@}%
\def\iiint{\DOTSI\intno@\thr@@\FN@\ints@}%
\def\iiiint{\DOTSI\intno@4 \FN@\ints@}%
\def\idotsint{\DOTSI\intno@\z@\FN@\ints@}%
\def\ints@{\findlimits@\ints@@}%
\newif\iflimtoken@
\newif\iflimits@
\def\findlimits@{\limtoken@true\ifx\next\limits\limits@true
 \else\ifx\next\nolimits\limits@false\else
 \limtoken@false\ifx\ilimits@\nolimits\limits@false\else
 \ifinner\limits@false\else\limits@true\fi\fi\fi\fi}%
\def\multint@{\int\ifnum\intno@=\z@\intdots@                          
 \else\intkern@\fi                                                    
 \ifnum\intno@>\tw@\int\intkern@\fi                                   
 \ifnum\intno@>\thr@@\int\intkern@\fi                                 
 \int}
\def\multintlimits@{\intop\ifnum\intno@=\z@\intdots@\else\intkern@\fi
 \ifnum\intno@>\tw@\intop\intkern@\fi
 \ifnum\intno@>\thr@@\intop\intkern@\fi\intop}%
\def\intic@{%
    \mathchoice{\hskip.5em}{\hskip.4em}{\hskip.4em}{\hskip.4em}}%
\def\negintic@{\mathchoice
 {\hskip-.5em}{\hskip-.4em}{\hskip-.4em}{\hskip-.4em}}%
\def\ints@@{\iflimtoken@                                              
 \def\ints@@@{\iflimits@\negintic@
   \mathop{\intic@\multintlimits@}\limits                             
  \else\multint@\nolimits\fi                                          
  \eat@}
 \else                                                                
 \def\ints@@@{\iflimits@\negintic@
  \mathop{\intic@\multintlimits@}\limits\else
  \multint@\nolimits\fi}\fi\ints@@@}%
\def\intkern@{\mathchoice{\!\!\!}{\!\!}{\!\!}{\!\!}}%
\def\plaincdots@{\mathinner{\cdotp\cdotp\cdotp}}%
\def\intdots@{\mathchoice{\plaincdots@}%
 {{\cdotp}\mkern1.5mu{\cdotp}\mkern1.5mu{\cdotp}}%
 {{\cdotp}\mkern1mu{\cdotp}\mkern1mu{\cdotp}}%
 {{\cdotp}\mkern1mu{\cdotp}\mkern1mu{\cdotp}}}%
\def\RIfM@{\relax\protect\ifmmode}
\def\text{\RIfM@\expandafter\text@\else\expandafter\mbox\fi}
\let\nfss@text\text
\def\text@#1{\mathchoice
   {\textdef@\displaystyle\f@size{#1}}%
   {\textdef@\textstyle\tf@size{\firstchoice@false #1}}%
   {\textdef@\textstyle\sf@size{\firstchoice@false #1}}%
   {\textdef@\textstyle \ssf@size{\firstchoice@false #1}}%
   \glb@settings}
\def\textdef@#1#2#3{\hbox{{%
                    \everymath{#1}%
                    \let\f@size#2\selectfont
                    #3}}}
\newif\iffirstchoice@
\def\Let@{\relax\iffalse{\fi\let\\=\cr\iffalse}\fi}%
\def\vspace@{\def\vspace##1{\crcr\noalign{\vskip##1\relax}}}%
\def\multilimits@{\bgroup\vspace@\Let@
 \baselineskip\fontdimen10 \scriptfont\tw@
 \advance\baselineskip\fontdimen12 \scriptfont\tw@
 \lineskip\thr@@\fontdimen8 \scriptfont\thr@@
 \lineskiplimit\lineskip
 \vbox\bgroup\ialign\bgroup\hfil$\m@th\scriptstyle{##}$\hfil\crcr}%
\def\Sb{_\multilimits@}%
\def\endSb{\crcr\egroup\egroup\egroup}%
\def\Sp{^\multilimits@}%
\newdimen\ex@
\def\rightarrowfill@#1{$#1\m@th\mathord-\mkern-6mu\cleaders
 \hbox{$#1\mkern-2mu\mathord-\mkern-2mu$}\hfill
 \mkern-6mu\mathord\rightarrow$}%
\def\leftarrowfill@#1{$#1\m@th\mathord\leftarrow\mkern-6mu\cleaders
 \hbox{$#1\mkern-2mu\mathord-\mkern-2mu$}\hfill\mkern-6mu\mathord-$}%
\def\leftrightarrowfill@#1{$#1\m@th\mathord\leftarrow
\mkern-6mu\cleaders
 \hbox{$#1\mkern-2mu\mathord-\mkern-2mu$}\hfill
 \mkern-6mu\mathord\rightarrow$}%
\def\overrightarrow{\mathpalette\overrightarrow@}%
\def\overrightarrow@#1#2{\vbox{\ialign{##\crcr\rightarrowfill@#1\crcr
 \noalign{\kern-\ex@\nointerlineskip}$\m@th\hfil#1#2\hfil$\crcr}}}%
\def\overleftarrow{\mathpalette\overleftarrow@}%
\def\overleftarrow@#1#2{\vbox{\ialign{##\crcr\leftarrowfill@#1\crcr
 \noalign{\kern-\ex@\nointerlineskip}$\m@th\hfil#1#2\hfil$\crcr}}}%
\def\overleftrightarrow{\mathpalette\overleftrightarrow@}%
\def\overleftrightarrow@#1#2{\vbox{\ialign{##\crcr
   \leftrightarrowfill@#1\crcr
 \noalign{\kern-\ex@\nointerlineskip}$\m@th\hfil#1#2\hfil$\crcr}}}%
\def\underrightarrow{\mathpalette\underrightarrow@}%
\def\underrightarrow@#1#2{\vtop{\ialign{##\crcr$\m@th\hfil#1#2\hfil
  $\crcr\noalign{\nointerlineskip}\rightarrowfill@#1\crcr}}}%
\def\underleftarrow{\mathpalette\underleftarrow@}%
\def\underleftarrow@#1#2{\vtop{\ialign{##\crcr$\m@th\hfil#1#2\hfil
  $\crcr\noalign{\nointerlineskip}\leftarrowfill@#1\crcr}}}%
\def\underleftrightarrow{\mathpalette\underleftrightarrow@}%
\def\underleftrightarrow@#1#2{\vtop{\ialign{##\crcr$\m@th
  \hfil#1#2\hfil$\crcr
 \noalign{\nointerlineskip}\leftrightarrowfill@#1\crcr}}}%
\def\qopnamewl@#1{\mathop{\operator@font#1}\nlimits@}
\let\nlimits@\displaylimits
\def\setboxz@h{\setbox\z@\hbox}
\def\varlim@#1#2{\mathop{\vtop{\ialign{##\crcr
 \hfil$#1\m@th\operator@font lim$\hfil\crcr
 \noalign{\nointerlineskip}#2#1\crcr
 \noalign{\nointerlineskip\kern-\ex@}\crcr}}}}
 \def\rightarrowfill@#1{\m@th\setboxz@h{$#1-$}\ht\z@\z@
  $#1\copy\z@\mkern-6mu\cleaders
  \hbox{$#1\mkern-2mu\box\z@\mkern-2mu$}\hfill
  \mkern-6mu\mathord\rightarrow$}
\def\leftarrowfill@#1{\m@th\setboxz@h{$#1-$}\ht\z@\z@
  $#1\mathord\leftarrow\mkern-6mu\cleaders
  \hbox{$#1\mkern-2mu\copy\z@\mkern-2mu$}\hfill
  \mkern-6mu\box\z@$}
\def\projlim{\qopnamewl@{proj\,lim}}
\def\injlim{\qopnamewl@{inj\,lim}}
\def\varinjlim{\mathpalette\varlim@\rightarrowfill@}
\def\varprojlim{\mathpalette\varlim@\leftarrowfill@}
\def\varliminf{\mathpalette\varliminf@{}}
\def\varliminf@#1{\mathop{\underline{\vrule\@depth.2\ex@\@width\z@
   \hbox{$#1\m@th\operator@font lim$}}}}
\def\varlimsup{\mathpalette\varlimsup@{}}
\def\varlimsup@#1{\mathop{\overline
  {\hbox{$#1\m@th\operator@font lim$}}}}
\def\align{\@verbatim \frenchspacing\@vobeyspaces \@alignverbatim
You are using the "align" environment in a style in which it is not defined.}
\let\csname endalign*\endcsname =\endtrivlist
\def\alignat{\@verbatim \frenchspacing\@vobeyspaces \@alignatverbatim
You are using the "alignat" environment in a style in which it is not defined.}
\let\csname endalignat*\endcsname =\endtrivlist
\def\xalignat{\@verbatim \frenchspacing\@vobeyspaces \@xalignatverbatim
You are using the "xalignat" environment in a style in which it is not defined.}
\let\csname endxalignat*\endcsname =\endtrivlist
\def\gather{\@verbatim \frenchspacing\@vobeyspaces \@gatherverbatim
You are using the "gather" environment in a style in which it is not defined.}
\let\csname endgather*\endcsname =\endtrivlist
\def\multiline{\@verbatim \frenchspacing\@vobeyspaces \@multilineverbatim
You are using the "multiline" environment in a style in which it is not defined.}
\let\csname endmultiline*\endcsname =\endtrivlist
\def\arrax{\@verbatim \frenchspacing\@vobeyspaces \@arraxverbatim
You are using a type of "array" construct that is only allowed in AmS-LaTeX.}
\def\tabulax{\@verbatim \frenchspacing\@vobeyspaces \@tabulaxverbatim
You are using a type of "tabular" construct that is only allowed in AmS-LaTeX.}
\let\csname endarrax*\endcsname =\endtrivlist
\let\csname endtabulax*\endcsname =\endtrivlist
 \def\endequation{%
     \ifmmode\ifinner 
      \iftag@
        \addtocounter{equation}{-1} 
        $\hfil
           \displaywidth\linewidth\@taggnum\egroup \endtrivlist
        \global\tag@false
        \global\@ignoretrue   
      \else
        $\hfil
           \displaywidth\linewidth\@eqnnum\egroup \endtrivlist
        \global\tag@false
        \global\@ignoretrue 
      \fi
     \else   
      \iftag@
        \addtocounter{equation}{-1} 
        \eqno \hbox{\@taggnum}
        \global\tag@false%
        $$\global\@ignoretrue
      \else
        \eqno \hbox{\@eqnnum}
        $$\global\@ignoretrue
      \fi
     \fi\fi
 } 
 \newif\iftag@ \tag@false
 \def\TCItag{\@ifnextchar*{\@TCItagstar}{\@TCItag}}
 \def\@TCItag#1{%
     \global\tag@true
     \global\def\@taggnum{(#1)}%
     \global\def\@currentlabel{#1}}
 \def\@TCItagstar*#1{%
     \global\tag@true
     \global\def\@taggnum{#1}%
     \global\def\@currentlabel{#1}}
     \def\tag{\@ifnextchar*{\@tagstar}{\@tag}}
     \def\@tag#1{%
         \global\tag@true
         \global\def\@taggnum{(#1)}}
     \def\@tagstar*#1{%
         \global\tag@true
         \global\def\@taggnum{#1}}
\def\RM{\rm}
\def\qed{\hfill$\square$\par}
\def\func#1{\mathop{\mathrm{#1}}\nolimits}
\def\enddoc{

\begin{document}
\title[Heat kernel estimates and isoperimetric inequalities]{Heat kernel
estimates for an operator with a singular drift and isoperimetric
inequalities}
\author{Alexander Grigor'yan}
\address{Department of Mathematics, University of Bielefeld, 33501
Bielefeld, Germany}
\email{grigor@math.uni-bielefeld.de}
\author{Shunxiang Ouyang}
\address{Department of Mathematics, University of Bielefeld, 33501
Bielefeld, Germany}
\email{souyang@math.uni-bielefeld.de}
\author{Michael R\"ockner}
\address{Department of Mathematics, University of Bielefeld, 33501
Bielefeld, Germany}
\email{roeckner@math.uni-bielefeld.de}
\thanks{Supported by SFB 701 of the German Research Council}
\date{\today }
\subjclass[2010]{58J35, 26D10, 28A75}
\keywords{Isoperimetric inequalities, functional inequalities, weighted
measure, singular drift}

\begin{abstract}
We prove upper and lower bounds of the heat kernel for the operator $\Delta
-\nabla (\frac{1}{|x|^{\alpha }})\cdot \nabla $ in $\mathbb{R}^{n}\setminus
\left\{ 0\right\} $ where $\alpha >0$. We obtain these bounds from an
isoperimetric inequality for a measure $\mathrm{e}^{-\frac{1}{|x|^{\alpha }}%
}dx$ on $\mathbb{R}^{n}\setminus \{0\}$. The latter amounts to a certain
functional isoperimetric inequality for the radial part of this measure.
\end{abstract}

\maketitle
\tableofcontents

\section{Introduction}

Consider the following differential operator $\mathcal{L}=\Delta +\nabla
\psi \cdot \nabla $ defined on $M:=\mathbb{R}^{n}\setminus \{0\}$, with a
singular potential 
\begin{equation*}
\psi (x)=-\frac{1}{|x|^{\alpha }},\quad \alpha >0.
\end{equation*}

The purpose of this paper is to obtain uniform bounds for the heat kernel $%
p_{t}(x,y)$ of $\mathcal{L}$ that would take into account the singularity of 
$\psi $ at the origin. In order to define what is the heat kernel of $%
\mathcal{L}$ let us observe that $\mathcal{L}$ can be written in the form 
\begin{equation*}
\mathcal{L}=\mathrm{e}^{-\psi }\mathrm{div}(\mathrm{e}^{\psi }\nabla )
\end{equation*}%
which implies that $\mathcal{L}$ is symmetric with respect to the following
measure: 
\begin{equation}
d\mu (x)=\mathrm{e}^{\psi (x)}\,dx=\mathrm{e}^{-\frac{1}{|x|^{\alpha }}}dx.
\label{mu}
\end{equation}%
That is, the operator $\mathcal{L}$ is formally self-adjoint on $%
L^{2}=L^{2}(M,\mu )$. Following the terminology of \cite{Gri06_JW06}, $%
\mathcal{L}$ is the Laplace operator of the weighted manifold\footnote{%
A weighted manifold is a couple $\left( M,\mu \right) $ where $M$ is a
Riemannian manifold and $\mu $ is a measure on $M$ with a smooth positive
density with respect to the Riemannian measure.} $(M,\mu )$. Using the
Friedrichs extension of this operator, one defines the associated heat
semigroup $P_{t}=\mathrm{e}^{t\mathcal{L}}$, $t\geq 0$, acting in $L^{2}$.
The heat kernel of $\mathcal{L}$ is then the integral kernel of $P_{t}$,
that is, a function $p_{t}(x,y)$ defined on $\mathbb{R}_{+}\times M\times M$
such that, for all $f\in L^{2}$, $t\geq 0$, $x\in M$, 
\begin{equation*}
P_{t}f(x)=\int_{M}p_{t}(x,y)f(y)\,d\mu (y).
\end{equation*}%
By general regularity theory, the heat kernel always exists and is a smooth
positive function of $(t,x,y)$ (cf. \cite{Gri09_book}).

The motivation for considering heat kernels of operators as $\mathcal{L}$
with singular drift comes from \cite{KR05}, where global existence and
uniqueness of strong solutions for stochastic differential equations (SDE)
with singular drifts was proved. The most important applications are the
analysis of particle systems with physically realistic, hence singular
interactions (cf. \cite[Section 9]{KR05}). One example is a diffusion in a
frozen random environment given by a countable set $\gamma$ of particles in $%
\mathbb{R}^n$, distributed according to a Ruelle Gibbs measure, i.e. the
diffusion solves the SDE 
\begin{equation*}
dX(t)=b(X(t))dt+dW(t), 
\end{equation*}
with 
\begin{equation*}
b(x):=-\sum_{y\in \gamma} \nabla V(x-y),\quad x\in\mathbb{R}^n, 
\end{equation*}
and $V\colon \mathbb{R}^n\to\mathbb{R}$ is a pair potential describing the
interaction of the moving particle $X(t)$, $t\geq 0$, with those in $\gamma$%
. $V$ is typically very singular at $x=0$ (e.g. of Lenard-Jones type)
modelling the strong repulsion between two particles. One of the main and
most interesting open questions about the solution $X(t)$, $t\geq 0$, is
whether (depending on the location of the points in $\gamma$ and the
strength of the singularity of $V$) it exhibits sub- or super-diffusive
behavior. So, a good way to start is to examine the heat kernel of the
corresponding generator $\mathcal{L}_b=\Delta +\langle b,\nabla \rangle$,
which is symmetric on $L^2\left(\mathbb{R}^n,\exp\left(-\sum_{y\in \gamma}
V(x-y) dx\right)\right)$. Therefore, in this paper, as a first step, we
study the model case described above, where $b=\nabla\psi$ and we have only
one particle, i.e. $\gamma=\{0\}$.

Our main results --- Theorems \ref{Thm:upper-bound-hk} and \ref%
{Thm:sup-lower-heat-kernel-apply} below, provide the following bounds for
the heat kernel of $\mathcal{L}$ for all $0<t<1:$ 
\begin{equation}
\sup_{x,y}p_{t}(x,y)\leq C\exp \left( \frac{C}{t^{\frac{\alpha }{\alpha +2}}}%
\right)  \label{ptup}
\end{equation}%
and 
\begin{equation*}
\sup_{x}p_{t}(x,x)\geq c\exp \left( \frac{c}{t^{\frac{\alpha }{\alpha +2}}}%
\right)
\end{equation*}%
where $C,c$ are some positive constants. It is important that these
estimates correctly capture the term $\exp \left( \frac{\func{const}}{t^{%
\frac{\alpha }{\alpha +2}}}\right) $, describing the short time on-diagonal
behavior of the heat kernel, that is determined by the singularity of the
drift.

Presently a variety of methods are available for obtaining heat kernel
estimates. A challenging feature of the above problem is that the methods
based on the curvature bounds fail here (cf. \cite{Ouyang09}). We use
instead the approach developed by the first-named author \cite%
{Gri09_book,Gri94_RMI,Gri06_JW06} that is based on isoperimetric and
Faber-Krahn inequalities. Given a weighted manifold $\left( M,\mu \right) $
and a function $\Lambda :(0,+\infty )\rightarrow \lbrack 0,+\infty ),$ we
say that $(M,\mu )$ satisfies the Faber-Krahn inequality with function $%
\Lambda $ if, for any precompact open set $U\subset M$, the following
inequality holds 
\begin{equation}
\lambda _{1}(U)\geq \Lambda (\mu (U)),  \label{Lambda-FK:ineq-iso}
\end{equation}%
where $\lambda _{1}(U)$ denotes the bottom of the spectrum of $\mathcal{L}$
in $L^{2}(U,\mu )$ with the Dirichlet boundary condition on $\partial U$. By
a result of \cite{Gri94_RMI}, the Faber-Krahn inequality implies a certain
upper bound of the heat kernel. On the other hand, by Cheeger's inequality,
the Faber-Krahn inequality (\ref{Lambda-FK:ineq-iso}) follows from a certain
isoperimetric inequality of the form 
\begin{equation}
\mu ^{+}(U)\geq J(\mu (U)),  \label{muJ}
\end{equation}%
where $\mu ^{+}(U)$ is the perimeter of $U$ defined by 
\begin{equation*}
\mu ^{+}(A)=\liminf_{r\rightarrow 0^{+}}\frac{\mu (A^{r})-\mu (A)}{r},
\end{equation*}%
where $A^{r}$ is the $r$-neighborhood of $A$ with respect to the Riemannian
metric of $M$. Any function $J$ that satisfies (\ref{muJ}) is called a lower
isoperimetric function of the measure $\mu $. Our main technical result,
Theorem \ref{Thm:iso-ineq-global}, yields the following lower isoperimetric
function of $\mu $: 
\begin{equation*}
J(v)=C\, v \left( \log \frac{1}{v}\right) ^{1+\frac{1}{\alpha }}
\end{equation*}%
for small enough values of $v$ and for some constant $C=C(n,\alpha )>0$.
This estimate leads in the end to the upper bound (\ref{ptup}) of the heat
kernel.

Let us recall some previous results on isoperimetric inequalities (for more
information on this active field, we refer the reader to \cite%
{Bar02,BH97,Hue11,Ros05} and the references therein). For any weighted
manifold $\left( M,\mu \right) $ let $I_{\mu }$ denote the isoperimetric
function of $\mu $, that is, the largest possible lower isoperimetric
function. For some specific measures on Euclidean space, the respective
isoperimetric functions are known exactly. For example, the isoperimetric
function for the Lebesgue measure $\lambda $ in $\mathbb{R}^{n}$ is given by 
\begin{equation*}
I_{\lambda }(v)=n\omega _{n}^{1/n}v^{(n-1)/n},
\end{equation*}%
where $\omega _{n}$ is the $\left( n-1\right) $-volume of the unit sphere $%
\mathbb{S}^{n-1}$ in $\mathbb{R}^{n}$.

Due to the celebrated result of Borell \cite{Bor75} and Sudakov-Tsirel'son 
\cite{ST74}, the isoperimetric function for the Gaussian measure 
\begin{equation*}
\gamma ^{n}(dx)=(2\pi )^{-\frac{n}{2}}\exp \left( {-\frac{|x|^{2}}{2}}%
\right) dx
\end{equation*}%
is given by 
\begin{equation*}
I_{\gamma ^{n}}(v)=c~\left( v\wedge (1-v)\right) \sqrt{\log \frac{1}{v\wedge
(1-v)}},
\end{equation*}%
where $c>0$ is some constant independent of $n$.

Various generalizations of this result have been studied. In particular, in 
\cite{Hue11} a lower bound is given for the isoperimetric function of the
probability measure 
\begin{equation}
\nu^{n,\alpha}(dx):=\frac{1}{Z_{n,\alpha}}\mathrm{e}^{-|x|^\alpha}\,dx
\label{nuna}
\end{equation}
on $\mathbb{R}^n$ with $\alpha\geq 1$ (where $Z_{n,\alpha}$ is a
normalization constant):

\begin{equation*}
I_{\nu^{n,\alpha}}(v)\geq C n^{\frac12-\frac1\alpha} (v\wedge (1-v)) \left(
\log\frac{1}{v\wedge (1-v)} \right)^{1- \frac{1}{\alpha\wedge 2} }
\end{equation*}
for some constant $C>0$ independent of $n$.

Note that all measures in $\mathbb{R}^{n}$ mentioned above are spherically
symmetric, so that they can be split into a product of a one dimensional
measure in the radial direction and the canonical measure on $\mathbb{S}%
^{n-1}$ in the angular direction. The isoperimetric function of the measure
on $\mathbb{S}^{n-1}$ is classical. The isoperimetric inequality for the
radial part of the measure $\mu $ is also straightforward. Gluing the radial
and angular isoperimetric inequalities presents certain challenges. For that
purpose, we use a so called functional isoperimetric inequality that was
proved for the Gaussian measure by Bobkov \cite{Bob96} and for the measure (%
\ref{nuna}) by Huet \cite{Hue11}. This inequality enjoys the following
distinctive feature: if it is known in the radial and angular directions, it
implies easily an isoperimetric inequality in the whole $\mathbb{R}^{n}$.

Hence, the last problem that we face on this long road to the goal is
obtaining the functional isoperimetric inequality for the radial part of the
measure $\mu $ (for the spherical part it follows from \cite{Hue11}). The
methods previously used for the measures $\gamma ^{n}$ and $\nu ^{n,\alpha }$
do not work for the measure $d\mu =\mathrm{e}^{-\frac{1}{|x|^{\alpha }}}dx$,
as they require the measure $\mu $ to be finite. We have developed an
entirely new method that constitutes the most interesting part of this paper
and is presented in Theorem \ref{Thm:iso-func-ineq} (and its application to
the measure $\mu $ is given in Theorem \ref{Thm:iso-func-ineq:radial}).

The organization of this paper follows the above scheme of the proof. In
Section \ref{Sec:1dim-general-ineq} we deduce a functional isoperimetric
inequality for measures on $\mathbb{R}_{+}$ from the normal isoperimetric
inequality. In Section \ref{Sec:ineq:radial} we obtain the functional
isoperimetric inequality for the radial part of the measure $\mu $. In
Section \ref{Sec:ineq:angular} we verify the functional isoperimetric
inequality for the canonical measure on the unit sphere. In Section \ref%
{Sec:ineq:combine} we combine these two inequalities to obtain a full
functional isoperimetric inequality for the measure $\mu $ and, hence, the
isoperimetric inequality for $\mu .$ Finally, in Section \ref{secUpper} we
apply our isoperimetric inequality to obtain the heat kernel upper estimate,
and in Section \ref{SecLow} we prove the lower estimate.

\begin{notation}
1. For any two non-negative functions $f,g$, the relation $f\approx g$ means
that $f$ and $g$ are comparable, that is, there exists a constant $C>0$ such
that 
\begin{equation*}
\frac{1}{C}g\leq f\leq Cg
\end{equation*}%
for a specified range of the arguments of $f,g$.

2. Letter $C$, $C_{1},C_{2},C^{\prime }$ etc. are used to denote various
positive constants whose values can change at each occurrence, unless
otherwise specified.

3. We frequently use the function $I\left( v\right) =v\left( \log \frac{1}{v}%
\right) ^{\beta }$ defined for $0<v\leq 1.$ Since $\lim_{v\rightarrow
0}I\left( v\right) =0$, we always assume without further explanation that
this function is extended to all $0\leq v\leq 1$ by setting $I\left(
0\right) =0$.
\end{notation}

\section{One-dimensional functional isoperimetric inequalities}

\label{Sec:1dim-general-ineq} In this section we prove the following theorem
that is the key to our main result.

\begin{theorem}
\label{Thm:iso-func-ineq} Let $\phi\colon \mathbb{R}_+\to \mathbb{R}_+$ be a
non-negative continuous function on $\mathbb{R}_+$ and consider the Borel
measure $d\nu(r)=\phi(r)\,dr$ on $\mathbb{R}_+$. 
Let $I,J,K,L$ be four non-negative functions on $\mathbb{R}_+$ with the
following properties:

\begin{enumerate}
\item[$\left( i\right) $] For all $a,b\geq 0$, 
\begin{equation}
I(ab)\leq bJ(a)+K(aL(b));  \label{Equ:IJKL-ineq}
\end{equation}

\item[$\left( ii\right) $] $J$ is a lower isoperimetric function for the
measure $\nu $;

\item[$\left( iii\right) $] $K$ is non-decreasing and concave;

\item[$\left( iv\right) $] $L$ is concave.
\end{enumerate}

Then, for all nonnegative continuously differentiable functions $f$ on $%
\mathbb{R}_+$ with bounded support, we have 
\begin{equation}  \label{Equ:iso-func-ineq-1dim-general}
I\left( \int_{\mathbb{R}_+} f\,d\nu \right) \leq K\left( \int_{\mathbb{R}_+}
L(f)\,d\nu\right) +\int_{\mathbb{R}_+}|f^\prime|\,d\nu.
\end{equation}
\end{theorem}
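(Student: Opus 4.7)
The natural strategy is to reduce the functional inequality to a statement about the distribution function of $f$ and then apply the pointwise hypothesis (i) at a well-chosen scale. Set $V(t) := \nu(\{f > t\})$; since $f$ has bounded support, $V$ is non-increasing, right-continuous, and compactly supported with $V(0) < \infty$. The layer-cake formula gives $\int_{\mathbb{R}_+} f\,d\nu = \int_0^\infty V(t)\,dt =: m$, and after a routine reduction to the case $L(0)=0$ with $L$ non-decreasing (handled by approximation/truncation), one has $\int_{\mathbb{R}_+} L(f)\,d\nu = \int_0^\infty L'(t) V(t)\,dt$. The co-area formula combined with (ii) yields $\int_{\mathbb{R}_+} |f'|\,d\nu \geq \int_0^\infty J(V(t))\,dt$. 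In this way the target inequality is reduced to a statement purely about the single function $V$.

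The next step is a carefully chosen single application of (i). The map $a \mapsto a L(m/a)$ is non-decreasing in $a$ (a consequence of concavity of $L$ together with the normalization $L(0)=0$), and by Jensen's inequality $V(0) L(m/V(0)) \geq \int L(f)\,d\nu$. Hence an intermediate-value argument produces a value $a^* \in (0, V(0)]$ with $a^* L(m/a^*) = \int L(f)\,d\nu$. Applying (i) with $a = a^*$ and $b = m/a^*$ then yields
$$I(m) \leq \frac{m}{a^*}\, J(a^*) + K\!\left(\int L(f)\,d\nu\right),$$
where the $K$-term matches the target exactly, using that $K$ is non-decreasing.

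What remains is to show $(m/a^*)\, J(a^*) \leq \int_0^\infty J(V(t))\,dt$. Ideally one identifies $a^* = V(t^*)$ for some threshold $t^*$, after which a monotonicity property of $v \mapsto J(v)/v$ (typical of isoperimetric profiles, and automatic when $J$ is concave with $J(0)=0$) closes the argument. The \emph{main obstacle} is precisely this reconciliation: $a^*$ is fixed by the $L$-functional whereas the flux integral lives on the level-set structure of $V$, and a naive substitution such as $a = V(0)$ forces Jensen's inequality in the wrong direction for the $K$-term. I expect the cleanest implementation is to first prove the inequality for simple (step) functions $f = \sum_{i} h_i \mathbf{1}_{E_i}$ with $E_1 \supset E_2 \supset \cdots$, where the matching between $L$-levels and $V$-levels is exact (one applies (i) level by level with $a = \nu(E_i)$, $b = h_i$, and aggregates using the subadditivity that concave $I$ with $I(0)=0$ enjoys), and then to extend to the general case $f \in C^1$ of bounded support by monotone approximation.
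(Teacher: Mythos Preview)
Your co-area reduction is fine, and you have correctly located the real difficulty: after writing $I(m)\le (m/a^{*})J(a^{*})+K(\int L(f)\,d\nu)$ you still need $(m/a^{*})J(a^{*})\le\int_{0}^{\infty}J(V(t))\,dt$, and there is no mechanism in the hypotheses to force this. Your fallback---apply (i) layer by layer and ``aggregate using the subadditivity that concave $I$ with $I(0)=0$ enjoys''---does not work under the stated assumptions. First, $I$ is \emph{not} assumed concave (only $K$ and $L$ are). Second, even granting concavity of $I$, the aggregation goes the wrong way on the $K$-side: concavity of $K$ with $K(0)\ge 0$ gives $\sum_{i}K(a_{i}L(h_{i}))\ge K\bigl(\sum_{i}a_{i}L(h_{i})\bigr)$, not $\le$; and in any case $\int L(f)\,d\nu$ is not $\sum_{i}\nu(E_{i})L(h_{i})$ for the layer-cake decomposition, since $L$ does not commute with the sum of increments. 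So the level-by-level scheme does not close.

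The paper's route avoids all of this by a different decomposition of step functions. Instead of the layer cake (nested supports, varying integrals), one shows that any nonnegative step function $f$ can be written as a convex combination $f=\sum_{k}p_{k}f_{k}$ of \emph{elementary} step functions $f_{k}=b_{k}\mathbf{1}_{[r_{k},s_{k})}$ with two crucial extra properties: each $f_{k}$ has the \emph{same} $\nu$-integral as $f$, and the weighted total variations add exactly, $V_{\nu}(f)=\sum_{k}p_{k}V_{\nu}(f_{k})$. For an elementary function the target inequality is immediate from (i) with $a=\nu([r,s))$, $b=$ height. With equal integrals the left side $I(\int f\,d\nu)$ is the same for every $f_{k}$, so averaging the elementary inequalities needs \emph{no} property of $I$ whatsoever; concavity of $K$ and $L$ (applied to a genuine convex combination) then handles the right side. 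The existence of such a decomposition is proved by an induction that peels off the tallest step, and this is the idea your proposal is missing.
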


\begin{remark}
The conditions and statement of Theorem \ref{Thm:iso-func-ineq} are similar
to that of \cite[Theorem 2]{Bar02}. The difference is that \cite[Theorem 2]%
{Bar02} works with probability measure, while the measure $\nu$ in Theorem %
\ref{Thm:iso-func-ineq} is general. The method of the proof for \cite[%
Theorem 2]{Bar02} does not work in our setting, and our proof is based on an
entirely different approach.
\end{remark}

In this paper we shall only use the special case of Theorem \ref%
{Thm:iso-func-ineq} when $J=L=\mathrm{const}\, I$ and $K=\mathrm{id. }$ For
convenience of the reader, let us state Theorem \ref{Thm:iso-func-ineq} in
this case.

\begin{theorem}
\label{Thm:iso-func-ineq-special} Let $\phi\colon \mathbb{R}_+\to \mathbb{R}%
_+$ be a non-negative continuous function on $\mathbb{R}_+$ and consider the
Borel measure $d\nu(r)=\phi(r)\,dr$ on $\mathbb{R}_+$. Let $I$ be a
non-negative function on $\mathbb{R}_+$ with the following properties:

\begin{enumerate}
\item[$\left( i\right) $] For some constant $C>0$ and for all $a,b\geq 0$, 
\begin{equation}
CI(ab)\leq bI(a)+aI(b).  \label{Equ:I-ineq}
\end{equation}

\item[$\left( ii\right) $] $I$ is a concave lower isoperimetric function for 
$\nu $.
\end{enumerate}

Then, for all non-negative continuously differentiable functions $f$ on $%
\mathbb{R}_+$with bounded support, we have 
\begin{equation}  \label{Equ:iso-func-ineq-1dim-generalII}
C I\left( \int_{\mathbb{R}_+} f\,d\nu \right) \leq \int_{\mathbb{R}_+}
I(f)\,d\nu +\int_{\mathbb{R}_+}|f^\prime|\,d\nu.
\end{equation}
\end{theorem}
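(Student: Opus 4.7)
The plan is to derive Theorem \ref{Thm:iso-func-ineq-special} as an immediate corollary of the more general Theorem \ref{Thm:iso-func-ineq} by an appropriate choice of the four functions $I,J,K,L$ appearing there. Concretely, I would invoke Theorem \ref{Thm:iso-func-ineq} with the identifications
\begin{equation*}
I_{\mathrm{gen}}:=CI,\qquad J:=I,\qquad L:=I,\qquad K:=\mathrm{id},
\end{equation*}
where $I$ is the function from the hypothesis of Theorem \ref{Thm:iso-func-ineq-special} and $C$ is the constant from (\ref{Equ:I-ineq}).

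The four hypotheses of Theorem \ref{Thm:iso-func-ineq} then have to be verified under this assignment, and each is either given or trivial. First, condition (i) of Theorem \ref{Thm:iso-func-ineq} becomes
\begin{equation*}
I_{\mathrm{gen}}(ab)=CI(ab)\leq bI(a)+aI(b)=bJ(a)+K(aL(b)),
\end{equation*}
which is exactly hypothesis (i) of Theorem \ref{Thm:iso-func-ineq-special}. Second, condition (ii) asks that $J=I$ be a lower isoperimetric function for $\nu$, which is hypothesis (ii). Third, $K=\mathrm{id}$ is obviously non-decreasing and concave, so (iii) holds. Fourth, $L=I$ is concave by hypothesis (ii), so (iv) holds.

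Once the hypotheses are verified, the conclusion (\ref{Equ:iso-func-ineq-1dim-general}) of Theorem \ref{Thm:iso-func-ineq} reads, for all non-negative $C^{1}$ functions $f$ on $\mathbb{R}_{+}$ with bounded support,
\begin{equation*}
CI\!\left(\int_{\mathbb{R}_{+}}f\,d\nu\right)=I_{\mathrm{gen}}\!\left(\int_{\mathbb{R}_{+}}f\,d\nu\right)\leq K\!\left(\int_{\mathbb{R}_{+}}L(f)\,d\nu\right)+\int_{\mathbb{R}_{+}}|f^{\prime}|\,d\nu=\int_{\mathbb{R}_{+}}I(f)\,d\nu+\int_{\mathbb{R}_{+}}|f^{\prime}|\,d\nu,
\end{equation*}
which is precisely (\ref{Equ:iso-func-ineq-1dim-generalII}). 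There is essentially no obstacle at this level: all real difficulty (the handling of the one-dimensional layer-cake/co-area reduction and the combinatorics of matching a general non-increasing profile against the product inequality (\ref{Equ:IJKL-ineq})) is absorbed into the proof of Theorem \ref{Thm:iso-func-ineq}, so the present specialization is just a matter of inserting the right functions and checking the structural hypotheses.
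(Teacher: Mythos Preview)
Your proposal is correct and matches the paper's own treatment: the paper states explicitly that Theorem \ref{Thm:iso-func-ineq-special} is just the special case of Theorem \ref{Thm:iso-func-ineq} with $J=L=\mathrm{const}\cdot I$ and $K=\mathrm{id}$, and gives no separate proof. Your choice $I_{\mathrm{gen}}=CI$, $J=L=I$, $K=\mathrm{id}$ is exactly this specialization (with the constant placed on the $I$-side rather than the $J,L$-side), and your verification of the four hypotheses is accurate.
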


The proof of Theorem \ref{Thm:iso-func-ineq} will consist of a series of
lemmas. In fact, we shall prove an extension of (\ref%
{Equ:iso-func-ineq-1dim-general}) for a class of step functions $f$. Let $f$
be a real-valued function on $\mathbb{R}_+$ with bounded support. Define the 
\emph{weighted total variation} of $f$ with respect to the measure $\nu$ by 
\begin{equation*}
V_\nu(f)=\sup_{\{\xi_0,\xi_1,\cdots,\xi_{n}\}
}\sum_{k=1}^{n}|f(\xi_{k})-f(\xi_{k-1})| \phi(\xi_{k-1}),
\end{equation*}
where $\sup$ is taken over all finite increasing sequences $%
\{\xi_0,\xi_1,\cdots,\xi_{n}\}$ of non-negative reals with arbitrary $n\in%
\mathbb{N}$ such that $\mathrm{supp}f\subset[\xi_{0},\xi_{n}]$. For example,
if $f$ is continuously differentiable then 
\begin{equation*}
V_\nu (f)=\int_{\mathbb{R}_+}|f^\prime|\,d\nu.
\end{equation*}
A function $f$ on $\mathbb{R}_{+}$ is called an \emph{elementary step function%
} if it has the form 
\begin{equation*}
f=b \Eins_{[r,s)}
\end{equation*}
for some real constant $b$ and $0\leq r< s$. A function $f$ on $\mathbb{R}_+$
is called a \emph{step function} if it is a finite sum of elementary step
functions. Clearly, any step function can be represented in the following
form 
\begin{equation}  \label{Equ:step-fnt-def}
f=\sum_{k=1}^n b_k\Eins_{[x_{k-1},x_k)},
\end{equation}
where $0= x_0< x_1< x_2<\cdots< x_n$, and $b_k$ are real constants. For the
step function (\ref{Equ:step-fnt-def}) we obviously have 
\begin{equation*}
V_\nu(f)=\sum_{k=1}^{n} |b_{k+1}-b_{k}| \phi(x_{k}),
\end{equation*}
where we set $b_{n+1}=0$.

For the proof of Theorem \ref{Thm:iso-func-ineq}, we shall first prove that
any non-negative step function $f$ satisfies the following inequality 
\begin{equation}  \label{Equ:iso-func-ineq-1dim-general-more}
I\left( \int_{\mathbb{R}_+} f\,d\nu \right) \leq K\left(\int_{\mathbb{R}_+}
L(f)\,d\nu\right) +V_{\nu}(f).
\end{equation}
We start with elementary step functions.

\begin{lemma}
\label{Lem:func-ineq-for-1-step-f} Under the hypotheses of Theorem \emph{\ref%
{Thm:iso-func-ineq},} inequality \emph{(\ref%
{Equ:iso-func-ineq-1dim-general-more})} holds for any elementary step
function of the form $f=b\Eins_{[r,s)}$, where $b\geq0$ and $0\leq r< s$.
\end{lemma}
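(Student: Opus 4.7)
The plan is to reduce the inequality to a single application of hypothesis (i) at the pair $(a,b)$, where $a := \nu([r,s))$. For $f = b\Eins_{[r,s)}$, all three quantities in the inequality can be computed explicitly. Assuming $L(0) = 0$ (which is forced if the right-hand side is to be finite whenever $f$ has bounded support and $\nu$ is infinite, and is the setting of the applications we use later), one immediately has
\[
\int_{\mathbb{R}_+} f\,d\nu = ab, \qquad \int_{\mathbb{R}_+} L(f)\,d\nu = a L(b),
\]
so the target reduces to $I(ab) \leq K(aL(b)) + V_\nu(f)$, and it suffices to prove $bJ(a) \leq V_\nu(f)$.

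For $V_\nu(f)$, the only nonzero increments $|f(\xi_k) - f(\xi_{k-1})|$ in the defining sum are of size $b$ and occur at the two discontinuities of $f$. Optimizing the partition so that $\xi_{k-1}$ approaches $r^-$ at one jump and $s^-$ at the other, and using continuity of $\phi$, one obtains $V_\nu(f) = b\phi(r) + b\phi(s)$ if $r > 0$, and $V_\nu(f) = b\phi(s)$ if $r = 0$; in the latter case the constraint $\xi_0 \geq 0$ combined with $\mathrm{supp}\, f \subset [\xi_0, \xi_n]$ forces $\xi_0 = 0$, and since $f(0) = b$ there is no jump at the origin to register.

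Next, apply hypothesis (ii) to the precompact open set $U = (r,s) \subset \mathbb{R}_+$. A direct computation from the definition $\nu^+(U) = \liminf_{h \to 0^+} (\nu(U^h) - \nu(U))/h$, again using continuity of $\phi$, gives $\nu^+(U) = \phi(r) + \phi(s)$ if $r > 0$, and $\nu^+(U) = \phi(s)$ if $r = 0$ (in the latter case the $h$-neighborhood inside $\mathbb{R}_+$ can only expand to the right). Since $\nu(U) = a$, hypothesis (ii) yields $J(a) \leq \nu^+(U) = V_\nu(f)/b$. Combining with hypothesis (i),
\[
I(ab) \leq b J(a) + K\bigl(a L(b)\bigr) \leq V_\nu(f) + K\!\left(\int_{\mathbb{R}_+} L(f)\,d\nu\right),
\]
which is exactly the claim. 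The trivial case $b = 0$ is covered by setting $a = 0$ in hypothesis (i). The only delicate point is the matching one-sided boundary behavior at $r = 0$ of $V_\nu$ and of $\nu^+$; once this is correctly bookkept, the proof is a one-line application of hypothesis (i).
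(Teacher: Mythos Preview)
Your approach is essentially the same as the paper's: set $a=\nu([r,s))$, compute the three terms explicitly, bound $V_\nu(f)$ below by $bJ(a)$ via the isoperimetric hypothesis (with the same case split at $r=0$), and then apply hypothesis~(i).

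The one point where you deviate is the treatment of the $L$-term. You impose the extra assumption $L(0)=0$ in order to obtain the \emph{equality} $\int_{\mathbb{R}_+} L(f)\,d\nu = aL(b)$. This assumption is not among the hypotheses of Theorem~\ref{Thm:iso-func-ineq}, so strictly speaking your argument does not prove the lemma as stated. The paper avoids this by observing that, since $L\geq 0$, one always has
\[
\int_{\mathbb{R}_+} L(f)\,d\nu \;\geq\; L(b)\,\nu([r,s)) \;=\; aL(b),
\]
and then invoking the monotonicity of $K$ from hypothesis~(iii) to conclude $K\bigl(\int_{\mathbb{R}_+} L(f)\,d\nu\bigr)\geq K(aL(b))$. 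With this one-line adjustment your proof becomes identical to the paper's, and the extra hypothesis on $L(0)$ is no longer needed.
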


\begin{proof}
Let $a=\nu([r,s))$. It is clear that 
\begin{equation*}
I\left( \int_{\mathbb{R}_+} f\,d\nu \right) = I(b\nu([r,s)))=I(a b)
\end{equation*}
and 
\begin{equation*}  \label{Equ:I=;K=}
K\left(\int_{\mathbb{R}_+} L(f)\,d\nu\right) \geq K(L(b)\nu([r,s)))=K(a
L(b)).
\end{equation*}
Using that $J $ is a lower isoperimetric function for $\nu$, we obtain, for
the case $r>0$ 
\begin{equation*}
V_{\nu}(f) =b\left(\phi(r)+\phi(s)\right) = b\nu^+([r,s)) \geq b
J\left(\nu([r,s))\right) = b J(a),
\end{equation*}
and for the case $r=0$ 
\begin{equation*}
V_{\nu}(f) =b\phi(s) = b\nu^+([0,s)) \geq b J\left(\nu([0,s))\right) = b
J(a).
\end{equation*}
Hence, (\ref{Equ:iso-func-ineq-1dim-general-more}) follows from (\ref%
{Equ:IJKL-ineq}).
\end{proof}

\medskip

Before we can treat an arbitrary step function, let us prove the following
lemma.

\begin{lemma}
\label{Lem:f_k->step_f} Let $f_1,f_2,\cdots,f_n$ be non-negative functions
on $\mathbb{R}_{+}$ with bounded supports such that \emph{(\ref%
{Equ:iso-func-ineq-1dim-general-more})} holds for all $f_k$, $%
k=1,2,\cdots,n. $ Assume also that 
\begin{equation*}
\int_{\mathbb{R}_+} f_1\,d\nu=\int_{\mathbb{R}_+} f_2\,d\nu=\cdots=\int_{%
\mathbb{R}_+} f_n\,d\nu.
\end{equation*}
Choose a sequence $\{p_{k}\}^{n}_{k=1}$ of non-negative reals such that $%
\sum_{k=1}^np_k=1,$ and set 
\begin{equation}  \label{pk}
f=\sum_{k=1}^n p_k f_k.
\end{equation}
If 
\begin{equation}  \label{Equ:equal-variation-assump}
V_\nu(f)=\sum_{k=1}^n p_k V_\nu(f_k),
\end{equation}
then \emph{(\ref{Equ:iso-func-ineq-1dim-general-more})} holds also for $f$.
\end{lemma}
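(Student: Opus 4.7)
The plan is to write out the hypothesized inequality for each $f_k$, take the convex combination with weights $p_k$, and then bound the resulting sums by the corresponding quantities for $f$ using the monotonicity/concavity properties of $K$ and $L$ together with the hypothesis on the total variation.

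First, since $\sum_k p_k = 1$ and all the integrals $\int f_k\,d\nu$ are equal, I would observe that
\begin{equation*}
\int_{\mathbb{R}_+} f\,d\nu = \sum_{k=1}^n p_k \int_{\mathbb{R}_+} f_k\,d\nu = \int_{\mathbb{R}_+} f_1\,d\nu,
\end{equation*}
so $I\bigl(\int f\,d\nu\bigr) = \sum_k p_k\, I\bigl(\int f_k\,d\nu\bigr)$. Applying the assumed inequality $(\ref{Equ:iso-func-ineq-1dim-general-more})$ to each $f_k$ and summing with weights $p_k$ gives
\begin{equation*}
I\!\left(\int_{\mathbb{R}_+} f\,d\nu\right) \leq \sum_{k=1}^n p_k\, K\!\left(\int_{\mathbb{R}_+} L(f_k)\,d\nu\right) + \sum_{k=1}^n p_k\, V_\nu(f_k).
\end{equation*}

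The second term on the right is exactly $V_\nu(f)$ by the hypothesis $(\ref{Equ:equal-variation-assump})$. For the first term, I would use concavity of $L$ pointwise,
\begin{equation*}
L(f(r)) = L\!\left(\sum_{k=1}^n p_k f_k(r)\right) \geq \sum_{k=1}^n p_k L(f_k(r)),
\end{equation*}
integrate against $\nu$, then apply that $K$ is non-decreasing to get $K\bigl(\int L(f)\,d\nu\bigr) \geq K\bigl(\sum_k p_k \int L(f_k)\,d\nu\bigr)$, and finally invoke concavity of $K$ to conclude $K\bigl(\sum_k p_k \int L(f_k)\,d\nu\bigr) \geq \sum_k p_k K\bigl(\int L(f_k)\,d\nu\bigr)$. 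Chaining these three inequalities yields
\begin{equation*}
\sum_{k=1}^n p_k\, K\!\left(\int_{\mathbb{R}_+} L(f_k)\,d\nu\right) \leq K\!\left(\int_{\mathbb{R}_+} L(f)\,d\nu\right),
\end{equation*}
and combining with the previous display gives $(\ref{Equ:iso-func-ineq-1dim-general-more})$ for $f$.

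There is no real obstacle here: the lemma is essentially a convexity-type stability statement, and every hypothesis of the theorem plays its natural role (concavity of $K$, concavity of $L$, monotonicity of $K$, and the carefully tuned variation identity that prevents any loss on the total-variation side). The only thing to be slightly careful about is the order in which concavity of $L$, monotonicity of $K$, and concavity of $K$ are applied, since $K$ acts on the integral rather than pointwise. The equal-integral assumption is crucial at the very first step to convert $I\bigl(\int f\,d\nu\bigr)$ into a convex combination of the $I\bigl(\int f_k\,d\nu\bigr)$, without which the whole argument would not get off the ground.
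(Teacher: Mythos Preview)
Your proof is correct and follows essentially the same route as the paper's: take the convex combination of the inequalities for the $f_k$, use the equal-integral assumption to handle the $I$ term, the variation identity to handle the $V_\nu$ term, and concavity of $L$ followed by monotonicity and concavity of $K$ to handle the $K\circ L$ term. The paper compresses the three-step estimate on the $K$-term into a single inequality, but the logic is identical.
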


Note that (\ref{pk}) implies the inequality 
\begin{equation*}
V_\nu(f)\le\sum_{k=1}^n p_k V_\nu(f_k),
\end{equation*}
whereas the equality (\ref{Equ:equal-variation-assump}) holds only in
specific situations, one of which will be described below.

\begin{proof}
It is clear that, for all $k=1,2,\cdots,n$, we have 
\begin{equation*}
\int _{\mathbb{R}_{+}}f_k\,d\nu=\int _{\mathbb{R}_{+}}f\,d\nu.
\end{equation*}
By hypotheses, we have, for all $k=1,2,\cdots,n$, 
\begin{equation*}
I\left(\int_{\mathbb{R}_+} f_k\,d\nu \right) \leq K\left(\int_{\mathbb{R}_+}
L(f_k)\,d\nu \right) + V_\nu(f_k).
\end{equation*}
Using the monotonicity of the function $K$, the concavity of $K$ and $L$,
and (\ref{Equ:equal-variation-assump}) we obtain 
\begin{equation*}
\begin{split}
I\left(\int_{\mathbb{R}_+}f\,d\nu\right) &=\sum_{k=1}^n p_k I\left(\int_{%
\mathbb{R}_+} f_k \, d\nu\right) \\
&\leq \sum_{k=1}^n p_k \left( K \left(\int_{\mathbb{R}_+} L(f_k) \,
d\nu\right) + V_\nu(f_k) \right) \\
&\leq K\left( \int_{\mathbb{R}_+} L\left(\sum_{k=1}^n p_k f_k\right)\,d\nu
\right) +\sum_{k=1}^n p_k V_\nu(f_k) \\
&= K\left( \int_{\mathbb{R}_+} L (f)\,d\nu \right) + V_\nu(f),
\end{split}%
\end{equation*}
which was to be proved.
\end{proof}

\begin{lemma}
\label{Lem:step-f-rewrite} Let $f$ be a step function of the following form 
\begin{equation}  \label{Equ1:Lem:step-f-rewrite}
f=\sum_{k=1}^n b_k\Eins_{[x_{k-1},x_k)},
\end{equation}
where $0=x_0<x_1<x_2<\cdots<x_n$ and $b_k\geq 0$ for all $k=1,2,\cdots,n$.
Then $f$ can be represented in the form 
\begin{equation}  \label{Equ:f=sum_fk}
f=\sum_{k=1}^n p_k f_k,
\end{equation}
where each $f_{k}$ is a non-negative elementary function and the following
relations are satisfied: 
\begin{equation}  \label{Equ:sum_pk=1}
\sum_{k=1}^n p_k =1,\quad p_k\geq 0,
\end{equation}
\begin{equation}  \label{Equ:int_fk=int_f}
\int_{\mathbb{R}_+} f_k \,d\nu=\int _{\mathbb{R}_{+}}f \,d\nu,
\end{equation}
and 
\begin{equation}  \label{Equ:V_f=sum_V_fk}
V_\nu(f)=\sum_{k=1}^n p_k V_{\nu}(f_k).
\end{equation}
\end{lemma}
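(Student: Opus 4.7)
My strategy is first to write $f$ as a non-negative sum $\sum_k m_k \Eins_{I_k}$ of indicators of single subintervals in such a way that the variations of the summands add without cancellation, and then to rescale to obtain the required convex combination $f = \sum p_k f_k$.

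I classify the jumps of $f$: set $b_{n+1}:=0$ and call $x_j$ an \emph{up-jump source} of mass $u_j := (b_{j+1}-b_j)^+$ and a \emph{down-jump sink} of mass $d_j := (b_j - b_{j+1})^+$ for $j=1,\ldots,n$. If $b_1>0$, I also treat $x_0=0$ as a \emph{virtual source} of mass $b_1$; this source incurs no weight in $V_\nu$ because the variation formula $V_\nu(f)=\sum_{j=1}^{n}|b_{j+1}-b_j|\phi(x_j)$ omits the $\phi(x_0)$ term. Telescoping gives $b_1+\sum_{j=1}^{n-1}u_j=\sum_{j=1}^{n}d_j$, so the total source and sink masses coincide. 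I then solve the transportation problem on the line: find non-negative masses $m_k$ supported on intervals $I_k=[r_k,s_k)$ with $r_k$ a source, $s_k$ a sink, and $r_k<s_k$, satisfying the source/sink mass balance. Feasibility follows from the mass identity, and a basic feasible solution uses at most $(\#\text{sources})+(\#\text{sinks})-1$ positive variables. Each $x_j$ for $1\le j\le n-1$ contributes at most one event (source or sink), and $x_0,x_n$ contribute at most one each, so $(\#\text{sources})+(\#\text{sinks})\le n+1$, giving $n'\le n$ active intervals. A telescoping check (``flow at $x$'' on $[x_{j-1},x_j)$ equals $b_1+\sum_{\ell<j}(u_\ell-d_\ell)=b_j$) confirms $\sum_k m_k \Eins_{I_k}=f$.

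Given such a decomposition, I define
\[
p_k := \frac{m_k\,\nu(I_k)}{\int_{\mathbb{R}_+} f\,d\nu}, \qquad f_k := \frac{\int_{\mathbb{R}_+} f\,d\nu}{\nu(I_k)}\,\Eins_{I_k},
\]
for $k=1,\ldots,n'$. Each $f_k$ is elementary with $\int f_k\,d\nu=\int f\,d\nu$, and $p_k f_k = m_k \Eins_{I_k}$ gives $f=\sum p_k f_k$; moreover $\sum p_k = 1$ since $\int f\,d\nu=\sum_k m_k\nu(I_k)$. For the variation, $V_\nu(\Eins_{I_k}) = \nu^+(I_k) = \phi(s_k)+\phi(r_k)\,\Eins_{\{r_k>0\}}$, and so $p_k V_\nu(f_k) = m_k \nu^+(I_k)$; regrouping the resulting sum by sources and sinks and using the mass balance yields $\sum p_k V_\nu(f_k) = \sum_{j\ge 1} u_j \phi(x_j) + \sum_{j=1}^{n} d_j \phi(x_j) = \sum_{j=1}^{n}|b_{j+1}-b_j|\phi(x_j)=V_\nu(f)$, the virtual source at $x_0$ contributing no $\phi(x_0)$ term (as required). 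If $n'<n$, I pad with $n-n'$ dummy elementary functions carrying $p_k=0$ and any elementary $f_k$ of integral $\int f\,d\nu$ (for instance $f_k = (\int f\,d\nu/\nu([0,x_n)))\,\Eins_{[0,x_n)}$).

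\textbf{Main obstacle.} The key technical input is the bound $n'\le n$ on the number of active intervals in the transportation decomposition, which rests on basic-feasible-solution theory for transportation LPs; alternatively, it can be obtained by an explicit left-to-right greedy algorithm that opens an interval at each source and closes (with at most one split) at each sink.
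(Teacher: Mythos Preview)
Your argument is correct and genuinely different from the paper's. The paper proceeds by induction on $n$: it locates the interval $[x_{k_0-1},x_{k_0})$ carrying the maximal value $b_{k_0}$, peels off an elementary function $g=c\,\Eins_{[x_{k_0-1},x_{k_0})}$ with $\int g\,d\nu=\int f\,d\nu$, writes $f=pg+h$ where $h$ has only $n-1$ steps, and then applies the induction hypothesis to $h$; the variation identity comes from the observation that at every node $x_k$ the jumps of $g$ and $h$ have the same sign as that of $f$. Your approach is instead a one-shot flow decomposition: you read $f$ as a nonnegative flow on the line with injections $u_j$ and withdrawals $d_j$, decompose it into source--sink paths $m_k\Eins_{I_k}$, and then rescale. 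The variation identity in your setup is immediate from the mass balance at each node, and the bound $n'\le n$ comes from the basic-feasible-solution count (or the greedy sweep). The paper's argument is entirely self-contained but somewhat opaque about \emph{why} the variations add without cancellation; your argument makes the structural reason transparent (each jump of $f$ is used by exactly the intervals that begin or end there), at the cost of importing a small piece of transportation theory. One point to tighten: feasibility of the left-to-right transportation problem needs more than the global mass identity---it needs the prefix inequalities, which here are exactly $b_j\ge 0$; you use this implicitly in the greedy alternative, but it would be cleaner to invoke it explicitly when you first assert feasibility.
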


\begin{proof}
In the case $n=1$ we just need to take $f_1=f$ and $p_{1}=1$. Assume that $%
n>1 $ and make the induction step from $n-1$ to $n$. We can assume that $f$
as in (\ref{Equ1:Lem:step-f-rewrite}) is not elementary. For convenience, we
set $b_0=b_{n+1}=0$. Let $b_{k_0}$ be the maximal value of $\{b_k\colon
k=1,2,\ldots,n\}.$ Without loss of generality we assume that 
\begin{equation*}
b_{k_{0}-1}\leq\, b_{k_{0}+1}
\end{equation*}
because the case when $b_{k_{0}-1}\geq b_{k_{0}+1}$ can be treated
similarly. If $b_{k_{0}+1}=b_{k_{0}}$, then we can reduce the number of
intervals and use the inductive hypothesis. Hence, we can assume that 
\begin{equation*}
b_{k_{0}+1}<b_{k_{0}}.
\end{equation*}
Let us define a function $h$ as follows 
\begin{equation}  \label{Equ:def:funct-h-1}
h=f\Eins_{\mathbb{R}_{+}\setminus [x_{k_{0}-1},x_{k_{0}})}+b_{k_{0}+1}\Eins%
_{[x_{k_{0}-1},x_{k_{0}})},
\end{equation}
that is, $h$ is equal to $f$ outside $[x_{k_{0}-1},x_{k_{0}})$ and is equal
to $b_{k_0+1}$ on $[x_{k_{0}-1},x_{k_{0}})$.

Define also a function $g$ by 
\begin{equation*}
g=c\Eins_{[x_{k_{0}-1},x_{k_{0}})},
\end{equation*}
where the constant $c$ is chosen to satisfy the following condition 
\begin{equation}  \label{Equ:g-f-to-c}
\int_{\mathbb{R}_{+}}g\,d\nu =\int_{\mathbb{R}_{+}}f\,d\nu ,
\end{equation}
that is, 
\begin{equation*}
c=\frac{1}{\nu ([x_{k_{0}-1},x_{k_{0}}))}\int_{\mathbb{R}_{+}}f\,d\nu
=b_{k_0}+\frac{1}{\nu ([x_{k_{0}-1},x_{k_{0}}))} \int_{\mathbb{R}%
_{+}\setminus (x_{k_{0}-1},x_{k_{0}}]}f\,d\nu .
\end{equation*}%
It is clear that $c> b_{k_0}$ since outside $[x_{k_{0}-1},x_{k_{0}})$ the
function $f\geq 0$ is not identically zero.

It follows that 
\begin{equation*}
g>f\ \text{on}\ [x_{k_{0}-1},x_{k_{0}}).
\end{equation*}%
On the other hand, we have 
\begin{equation*}
f=b_{k_{0}}>b_{k_{0}+1}=h\ \text{on}\ [x_{k_{0}-1},x_{k_{0}}).
\end{equation*}%
Hence, we obtain 
\begin{equation*}
g>f>h\ \text{on}\ [x_{k_{0}-1},x_{k_{0}}).
\end{equation*}%
Therefore, there is a constant $p\in (0,1)$ such that 
\begin{equation}
f=pg+h  \label{Equ:f=pg+h}
\end{equation}%
on $[x_{k_{0}-1},x_{k_{0}})$. Noting that $h=f$ and $g=0$ outside $%
[x_{k_{0}-1},x_{k_{0}})$, we see that (\ref{Equ:f=pg+h}) holds on $\mathbb{R}%
_{+}$. 
\begin{figure}[tbph]
\begin{center}
\subfigure[Step function $f$]{
    \includegraphics[width=0.43\textwidth]{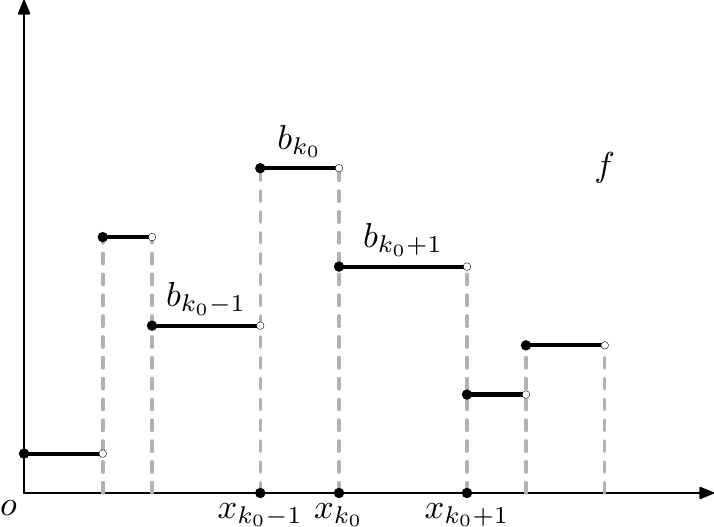}
}\\[0pt]
\subfigure[Step function $h$]{
    \includegraphics[width=0.43\textwidth]{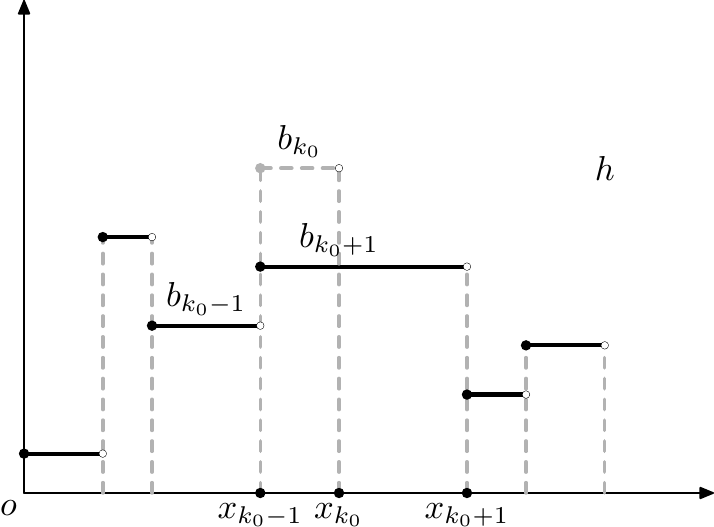}
} 
\subfigure[Step function $g$]{
    \includegraphics[width=0.43\textwidth]{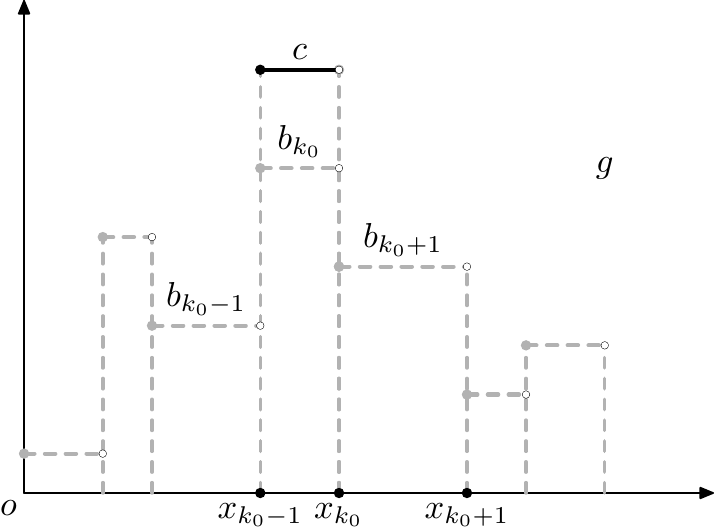}
} \label{Fig:step-funt-f}
\end{center}
\caption{Functions $f=pg+h$, $h$, and $g$}
\end{figure}

The function $h$ is constant on each interval $[x_{k-1},x_k)$. On $%
[x_{k_{0}-1},x_{k_{0}})$ and $[x_{k_{0}},x_{k_{0}+1})$, $h$ is equal to $%
b_{k_0+1}$. Therefore, by merging these two intervals, $h$ can be
represented as a step function, based on $n-1$ intervals, that is, 
\begin{equation*}
h=\sum_{k=1}^{k_0-1} b_k\Eins_{[x_{k-1},x_k)} + b_{k_0+1}\Eins%
_{[x_{k_{0}-1},x_{k_{0}+1})} + \sum_{k=k_0+2}^n b_k\Eins_{[x_{k-1},x_k)}.
\end{equation*}

By the induction hypothesis, there exist $n-1$ non-negative elementary step
functions $h_i$ and constants $q_i$, $i=1,2,\cdots,n-1$, such that 
\begin{equation}  \label{Equ:h=sum_qihi}
h=\sum_{i=1}^{n-1}q_{i}h_{i},
\end{equation}%
and 
\begin{equation}
\sum_{i=1}^{n-1}q_{i}=1,\quad q_{i}\geq 0,
\end{equation}

\begin{equation}  \label{Equ:nuh_i=nu_h}
\int _{\mathbb{R}_{+}}h_{i}\,d\nu =\int _{\mathbb{R}_{+}}h\,d\nu,
\end{equation}

\begin{equation}  \label{Equ:Vf=sum_qVh}
V_\nu(h) =\sum_{i=1}^{n-1}q_{i} V_\nu(h_{i}).
\end{equation}

It follows from (\ref{Equ:h=sum_qihi}) that 
\begin{equation*}
f=pg+\sum_{i=1}^{n-1}q_{i}h_{i}=pg+\sum_{i=1}^{n-1}q_{i}(1-p)\frac{h_{i}}{1-p%
}.
\end{equation*}%
Setting 
\begin{equation*}
f_{n}=g, \ p_{n}=p, \ f_{i}=\frac{h_{i}}{1-p} ,\ \ p_{i}=q_{i}(1-p)\ \ \text{%
for } i=1,2,\cdots ,n-1,
\end{equation*}
we obtain 
\begin{equation*}
f=\sum_{k=1}^{n}p_{k}f_{k}.
\end{equation*}
Moreover, we have 
\begin{equation*}
\sum_{k=1}^n p_k=p+\sum_{i=1}^{n-1}q_i (1-p)=p+(1-p)=1,
\end{equation*}
and 
\begin{equation*}
\int _{\mathbb{R}_{+}}f_n \,d\nu= \int _{\mathbb{R}_{+}}g \,d\nu= \int _{%
\mathbb{R}_{+}}f \,d\nu.
\end{equation*}
Since by (\ref{Equ:g-f-to-c}) 
\begin{equation*}
\int _{\mathbb{R}_{+}}h\,d\nu=\int _{\mathbb{R}_{+}}(f-pg)\,d\nu=(1-p)\int _{%
\mathbb{R}_{+}}f\,d\nu,
\end{equation*}
we obtain, for any $k=1,2,\cdots,n-1$, 
\begin{equation*}
\int _{\mathbb{R}_{+}}f_k \,d\nu = \int _{\mathbb{R}_{+}}\frac{h_{k}}{1-p}
\,d\nu =\frac{1}{1-p}\left(\int _{\mathbb{R}_{+}}h_{} \,d\nu \right) =\int _{%
\mathbb{R}_{+}}f\,d\nu.
\end{equation*}
By the construction of $h$ and $g$, at each point $x_{k}$ the jumps of $h$
and $g$ have the same sign as that of $f$, so that $V_{\mu}$ acts linearly
on the sum $f=h+pg$, consequently 
\begin{equation*}
V_\nu(f)=V_\nu(h) + p V_\nu(g).
\end{equation*}
By (\ref{Equ:Vf=sum_qVh}) we obtain 
\begin{equation*}
\begin{split}
V_\nu(f) &= \sum_{i=1}^{n-1}q_{i}V_\nu(h_i) + p V_\nu(g) \\
&=\sum_{i=1}^{n-1}\frac{p_i}{1-p}V_\nu(h_i) + p_n V_\nu(f_n) \\
&=\sum_{i=1}^{n-1}p_i V_\nu\left(\frac{1}{1-p}h_{i}\right) +p_{n}
V_\nu(f_{n}) \\
&=\sum_{i=1}^{n-1} p_i V_\nu(f_{i}) + p_n V_\nu(f_{n}) \\
&=\sum_{i=1}^{n} p_i V_\nu(f_{i}),
\end{split}%
\end{equation*}
which finishes the proof.
\end{proof}

\begin{corollary}
\label{Cor:2Lemma-combine}Under the hypotheses of Theorem \emph{\ref%
{Thm:iso-func-ineq}}, inequality \emph{(\ref%
{Equ:iso-func-ineq-1dim-general-more})} holds for all non-negative step
functions on $\mathbb{R}_+$.
\end{corollary}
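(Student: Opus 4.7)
The plan is to combine the three preceding lemmas in a single short argument. Given an arbitrary non-negative step function $f$ on $\mathbb{R}_+$, I would first invoke Lemma \ref{Lem:step-f-rewrite} to write
\[
f=\sum_{k=1}^{n} p_k f_k,
\]
where each $f_k$ is a non-negative elementary step function, the coefficients $p_k$ are non-negative and sum to one, the integrals $\int f_k\, d\nu$ all coincide with $\int f\, d\nu$, and the total variation decomposes additively:
\[
V_\nu(f)=\sum_{k=1}^{n} p_k V_\nu(f_k).
\]

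Next, since each $f_k$ is an elementary step function of the form $b\mathbf{1}_{[r,s)}$ with $b\geq 0$, Lemma \ref{Lem:func-ineq-for-1-step-f} applies and gives
\[
I\Bigl(\int_{\mathbb{R}_+} f_k\, d\nu\Bigr)\leq K\Bigl(\int_{\mathbb{R}_+} L(f_k)\, d\nu\Bigr)+V_\nu(f_k)
\]
for every $k=1,\dots,n$. So the hypotheses of Lemma \ref{Lem:f_k->step_f} are satisfied by the collection $\{f_k\}$ together with the weights $\{p_k\}$.

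Applying Lemma \ref{Lem:f_k->step_f} then delivers inequality (\ref{Equ:iso-func-ineq-1dim-general-more}) for $f$ itself, which is exactly the claim of the corollary. There is no real obstacle here — all the work has already been done in the three lemmas — the only thing to check is that the hypotheses match up, in particular that the decomposition produced by Lemma \ref{Lem:step-f-rewrite} is precisely of the type accepted by Lemma \ref{Lem:f_k->step_f}, which is immediate from the conclusions (\ref{Equ:sum_pk=1})--(\ref{Equ:V_f=sum_V_fk}).
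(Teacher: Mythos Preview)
Your proposal is correct and follows essentially the same approach as the paper: decompose $f$ via Lemma~\ref{Lem:step-f-rewrite}, verify the inequality for each elementary piece via Lemma~\ref{Lem:func-ineq-for-1-step-f}, and then combine via Lemma~\ref{Lem:f_k->step_f}. There is nothing to add.
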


\begin{proof}
By Lemma \ref{Lem:step-f-rewrite}, we can represent any non-negative step
function $f$ as the sum of non-negative elementary step functions such that
the conditions of Lemma \ref{Lem:f_k->step_f} are satisfied. Since for any
non-negative elementary function inequality (\ref%
{Equ:iso-func-ineq-1dim-general-more}) holds by Lemma \ref%
{Lem:func-ineq-for-1-step-f}, we conclude by Lemma \ref{Lem:f_k->step_f},
that $f$ satisfies (\ref{Equ:iso-func-ineq-1dim-general-more}).
\end{proof}

\begin{lemma}
\label{Lem:approx-by-step-f} Let $f$ be a non-negative continuously
differentiable function on $\mathbb{R}_{+}$ with support in an interval $%
[0,l].$ Consider the step function 
\begin{equation}
f_{n}=\sum_{k=1}^{n}f(x_{k})\Eins_{[x_{k-1},x_{k})},  \label{fn}
\end{equation}%
where $x_{k}=\frac{k}{n}l$. Then the sequence $\{f_{n}\}$ converges to $f$
as $n\rightarrow \infty $ uniformly on $\mathbb{R}_{+}$ and 
\begin{equation}
\lim_{n\rightarrow \infty }V_{\nu }(f_{n})=\int_{\mathbb{R}_{+}}|f^{\prime
}|\,d\nu .  \label{Equ:V_nuf_n-2-int-f'}
\end{equation}
\end{lemma}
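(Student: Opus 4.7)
The plan is to prove the two conclusions separately; the first is a standard uniform continuity argument, while the second hinges on recognizing the weighted total variation $V_\nu(f_n)$ as a Riemann sum for $\int_0^l |f'(x)|\phi(x)\,dx$.

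First I would note that because $f$ has support in $[0,l]$ and is continuous on $\mathbb{R}_+$, it vanishes on $[l,\infty)$ and in particular $f(l)=0$. On the compact interval $[0,l]$, $f$ is uniformly continuous, so for any $\varepsilon>0$ there exists $\delta>0$ such that $|f(x)-f(y)|<\varepsilon$ whenever $|x-y|<\delta$. Choosing $n$ with $l/n<\delta$ and observing that, for $x\in[x_{k-1},x_k)$, $|f_n(x)-f(x)|=|f(x_k)-f(x)|<\varepsilon$, yields $\|f_n-f\|_\infty\to 0$ (both functions vanish outside $[0,l]$, so the bound extends to all of $\mathbb{R}_+$).

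For the variation, I would apply the explicit formula for $V_\nu$ on step functions derived earlier in Section~\ref{Sec:1dim-general-ineq}. With $b_k=f(x_k)$ and the convention $b_{n+1}=0$, this gives
\begin{equation*}
V_\nu(f_n)=\sum_{k=1}^{n}|f(x_{k+1})-f(x_k)|\,\phi(x_k),
\end{equation*}
where $f(x_{n+1}):=0$. Since $f(x_n)=f(l)=0$, the $k=n$ term drops out, leaving
\begin{equation*}
V_\nu(f_n)=\sum_{k=1}^{n-1}|f(x_{k+1})-f(x_k)|\,\phi(x_k).
\end{equation*}
Applying the mean value theorem on $[x_k,x_{k+1}]$ produces $\xi_k\in(x_k,x_{k+1})$ with $f(x_{k+1})-f(x_k)=f'(\xi_k)\,l/n$, so
\begin{equation*}
V_\nu(f_n)=\frac{l}{n}\sum_{k=1}^{n-1}|f'(\xi_k)|\,\phi(x_k).
\end{equation*}

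The right-hand side is essentially a Riemann sum for $\int_0^l |f'(x)|\phi(x)\,dx=\int_{\mathbb{R}_+}|f'|\,d\nu$. The only minor subtlety is that $|f'|$ is evaluated at $\xi_k$ while $\phi$ is evaluated at $x_k$. I would handle this by noting that $|f'|$ and $\phi$ are continuous, hence uniformly continuous and bounded, on $[0,l]$, so $|f'(\xi_k)|\phi(x_k)-|f'(x_k)|\phi(x_k)\to 0$ uniformly in $k$; replacing $\xi_k$ by $x_k$ changes the sum by at most $l\cdot\|\phi\|_\infty\cdot\omega(l/n)$, where $\omega$ is the modulus of continuity of $|f'|$ on $[0,l]$, which tends to $0$. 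The remaining sum $\frac{l}{n}\sum_{k=1}^{n-1}|f'(x_k)|\phi(x_k)$ is a genuine Riemann sum of the continuous function $|f'|\phi$ and hence converges to $\int_0^l|f'(x)|\phi(x)\,dx$, giving \eqref{Equ:V_nuf_n-2-int-f'}. No genuine obstacle is expected; the argument is standard once the boundary term $f(x_n)\phi(x_n)$ is checked to vanish, which is why the support condition is essential.
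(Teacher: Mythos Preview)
Your proposal is correct and follows essentially the same route as the paper's proof: both use the explicit formula for $V_\nu$ of a step function, apply the mean value theorem to each increment, and then control the mismatch between $|f'(\xi_k)|$ and $|f'(x_k)|$ via uniform continuity of $f'$ on $[0,l]$ to reduce to a genuine Riemann sum. You are in fact slightly more careful than the paper about the boundary term (explicitly noting $f(x_n)=f(l)=0$ so the $k=n$ summand vanishes) and about the uniform convergence, but the substance is identical.
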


\begin{proof}
The uniform convergence of $\{f_n\}$ to $f$ is obvious. We only need to show
(\ref{Equ:V_nuf_n-2-int-f'}). By the mean value theorem, for every $%
k=1,2,\cdots,n$, there exists some $\xi_k\in[x_{k},x_{k+1}]$ such that 
\begin{equation*}
f(x_{k+1})-f(x_{k})=f^{\prime }(\xi_k)(x_{k+1}-x_{k})= f^{\prime }(\xi_k) 
\frac{l}{n}.
\end{equation*}
It follows that 
\begin{equation}  \label{Equ:V_nufn=sum}
V_{\nu}(f_n)=\sum_{k=1}^{n} |f(x_{k+1})-f(x_{k})|\phi(x_{k}) =\sum_{k=1}^{n}
|f^{\prime }(\xi_k)|\phi(x_k) \frac{l}{n}.
\end{equation}
Since the function $|f^{\prime }|\phi$ is Riemann integrable, we have as $%
n\rightarrow\infty$ 
\begin{equation*}
\sum_{k=1}^{n} |f^{\prime }(x_k)|\phi(x_k) \frac{l}{n}\rightarrow\int_%
\mathbb{R_+}|f^{\prime }|\phi\, dx=\int_\mathbb{R_+}|f^{\prime }| d\nu.
\end{equation*}
On the other hand, we have 
\begin{equation*}
\left| \sum_{k=1}^{n} |f^{\prime }(\xi_k)|\phi(x_k) \frac{l}{n}%
-\sum_{k=1}^{n} |f^{\prime }(x_k)|\phi(x_k) \frac{l}{n}\right|\le
\sup_{k}|f^{\prime }(\xi_{k})-f^{\prime }(x_{k})|\sum_{k=1}^{n} \phi(x_k) 
\frac{l}{n}.
\end{equation*}
By the continuity of $f^{\prime }$, the $\sup$-term on the right hand side
tends to $0$ as $n\rightarrow\infty$. Since the sum-term tends to $%
\int^{l}_{0}\phi(x)dx<\infty,$ the whole expression tends to $0$, which
finishes the proof.
\end{proof}

\bigskip

\begin{proof}[Proof of Theorem \protect\ref{Thm:iso-func-ineq}]
Let $f$ be a non-negative continuously differentiable function on $\mathbb{R}%
_{+}$ with bounded support. Define $f_{n}$ by (\ref{fn}). By Corollary \ref%
{Cor:2Lemma-combine}, inequality (\ref{Equ:iso-func-ineq-1dim-general-more})
holds for each function $f_{n}$. Letting $n\rightarrow\infty$ by Lemma \ref%
{Lem:approx-by-step-f} we obtain that $f$ satisfies (\ref%
{Equ:iso-func-ineq-1dim-general}), which finishes the proof.
\end{proof}

\section{Functional isoperimetric inequality for the radial measure}

\label{Sec:ineq:radial}We here apply Theorem \ref{Thm:iso-func-ineq-special}
to obtain a functional isoperimetric inequality for the measure 
\begin{equation}
d\nu (r)=r^{n-1}\mathrm{e}^{-\frac{1}{r^{\alpha }}}dr  \label{nun}
\end{equation}%
on $(0,\infty )$, where $\alpha >0$ and $n\geq 1.$ Note that $\nu $ is the
radial part of the measure 
\begin{equation}
d\mu (x)=\mathrm{e}^{-\frac{1}{|x|^{\alpha }}}dx  \label{mun}
\end{equation}%
on $\mathbb{R}^{n}\setminus \{0\}$.

The isoperimetric function for the measure $\nu $ can be obtain from the
following result of \cite{BCM12}.

\begin{proposition}
\label{Prop:BCM01}(\cite[Proposition 3.1]{BCM12}) Let $\phi $ be a positive
continuous non-decreasing function defined on $\left( 0,+\infty \right) $.
Consider the Borel measure $d\nu (r)=\phi (r)\,dr$ on $\left( 0,+\infty
\right) $. Then for any Borel set $A\subset \left( 0,+\infty \right) $ we
have 
\begin{equation}
\nu ^{+}(A)\geq \nu ^{+}((0,r)),  \label{Equ:BCM01}
\end{equation}%
where $r\geq 0$ is chosen such that 
\begin{equation*}
\nu ((0,r))=\nu (A).
\end{equation*}%
Furthermore, if\,\ $\lim_{r\rightarrow 0}\phi (r)=0,$ then the isoperimetric
function $I_{\nu }$ is given by the identity 
\begin{equation*}
I_{\nu }(v)=\phi (r),
\end{equation*}%
where $v=\nu ((0,r))$.
\end{proposition}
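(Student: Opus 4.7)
I would prove the proposition by the classical observation that, when the density $\phi$ is non-decreasing, initial intervals $(0,r)$ minimize the $\nu$-perimeter among Borel sets of the same $\nu$-measure. The argument has three natural steps.

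\textbf{Step 1 (direct computation on initial intervals).} For $A=(0,r)$ with $r>0$, the $\varepsilon$-neighborhood inside $(0,\infty)$ is $(0,r+\varepsilon)$, so continuity of $\phi$ gives
$$\nu^{+}((0,r))=\lim_{\varepsilon\to0^{+}}\frac{1}{\varepsilon}\int_{r}^{r+\varepsilon}\phi(s)\,ds=\phi(r).$$

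\textbf{Step 2 (lower bound for sets with finite supremum).} Assume $s:=\sup A<\infty$. Since points of $A$ accumulate at $s$ from the left, for every $\varepsilon>0$ one has $(s,s+\varepsilon)\subset A^{\varepsilon}\setminus A$: for $x\in(s,s+\varepsilon)$ pick $a\in A$ with $a>s-(\varepsilon-(x-s))$ to obtain $|x-a|<\varepsilon$. Consequently
$$\nu(A^{\varepsilon})-\nu(A)\geq\int_{s}^{s+\varepsilon}\phi(u)\,du,$$
and dividing by $\varepsilon$ before taking $\liminf$ yields $\nu^{+}(A)\geq\phi(s)$. Now $A\subset(0,s)$ gives $\nu(A)\leq\nu((0,s))$, so the $r$ determined by $\nu((0,r))=\nu(A)$ satisfies $r\leq s$; monotonicity of $\phi$ then delivers $\phi(s)\geq\phi(r)$, and combining with Step~1 establishes (\ref{Equ:BCM01}) in this case.

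\textbf{Step 3 (supremum at infinity, and the isoperimetric function).} If $\sup A=\infty$ but $\nu(A)<\infty$, then $A$ must admit components at arbitrarily large scale; since $\phi$ is non-decreasing and hence bounded away from $0$ on $[r_{0},\infty)$ for any $r_{0}>0$, each such far-out component contributes order $\phi(\cdot)\,\varepsilon$ to $\nu(A^{\varepsilon}\setminus A)$. Summing these contributions one gets $\nu^{+}(A)=\infty$, so (\ref{Equ:BCM01}) is trivial. This proves the first assertion for every Borel $A$. For the second assertion, the hypothesis $\lim_{r\to0^{+}}\phi(r)=0$ makes $F(r):=\nu((0,r))$ continuous on $[0,\infty)$ with $F(0)=0$ and strictly increasing (since $\phi>0$), so each $v$ in the range of $F$ is uniquely represented as $v=\nu((0,r))$; together with Steps~1--2 this gives $I_{\nu}(v)=\phi(r)$.

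The main obstacle is Step~3: turning the heuristic ``infinitely many far-out components force infinite perimeter'' into a clean argument requires careful bookkeeping of which components of $A^{\varepsilon}$ merge and which remain separate as $\varepsilon\to0$. A convenient workaround is to truncate $A$ at level $T$, apply the finite-supremum case to $A\cap(0,T)$, and let $T\to\infty$ while controlling the boundary error near $T$ via monotonicity of $\phi$.
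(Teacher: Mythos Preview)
The paper does not prove this proposition at all; it is quoted from \cite{BCM12} and used as a black box, so there is no in-paper argument to compare yours against.

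Your Steps~1 and~2 are correct and capture the essential mechanism: monotonicity of $\phi$ guarantees that the strip $(s,s+\varepsilon)$ beyond $s=\sup A$ lies in $A^{\varepsilon}\setminus A$ and carries $\nu$-mass at least $\varepsilon\phi(s)\ge\varepsilon\phi(r)$.

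Step~3, however, is a genuine gap, and your proposed truncation workaround does not close it. From $A_T:=A\cap(0,T)\subset A$ one gets $A_T^{\varepsilon}\subset A^{\varepsilon}$, hence
\[
\nu(A_T^{\varepsilon})-\nu(A_T)\;\le\;\bigl[\nu(A^{\varepsilon})-\nu(A)\bigr]+\bigl[\nu(A)-\nu(A_T)\bigr].
\]
The second bracket is a fixed positive number whenever $\nu(A\cap[T,\infty))>0$, so dividing by $\varepsilon$ and letting $\varepsilon\to0$ yields only $\nu^{+}(A_T)\le\nu^{+}(A)+\infty$; the ``boundary error near $T$'' is not $O(\varepsilon)$ and cannot be controlled by monotonicity of $\phi$.

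A repair that does work is to replace $\sup A$ by the \emph{essential} supremum $\sigma:=\operatorname{ess\,sup}_{\nu}A$. If $\sigma<\infty$, put $A':=A\cap(0,\sigma]$; then $\nu(A')=\nu(A)$ and $A'\subset A$ gives $\nu(A^{\varepsilon})-\nu(A)\ge\nu(A'^{\varepsilon})-\nu(A')$, so $\nu^{+}(A)\ge\nu^{+}(A')$. Since $\sup A'=\sigma$ by definition of the essential supremum, your Step~2 applies verbatim to $A'$. The residual case $\sigma=\infty$ with $\nu(A)<\infty$ still requires a separate argument (for each $T$ one must locate essential boundary of $A$ beyond $T$ and extract a contribution $\ge\phi(T)\varepsilon$), but this reduction at least removes the spurious difficulty caused by $\nu$-null tails.
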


Now we can determine a lower isoperimetric function for the measure defined
in (\ref{nun}).

\begin{proposition}
\label{Prop:J-def} There exists some constants $c,c^{\prime }>0$ and $%
0<v_{0}<1$ such that the function $J$, defined by 
\begin{equation}
J(v)=\left\{ 
\begin{array}{ll}
cv\left( \log \displaystyle\frac{1}{v}\right) ^{1+\frac{1}{\alpha }},\quad & 
0\leq v\leq v_{0}, \\ 
c^{\prime} \displaystyle v^{ \frac{n-1}{n}},\quad & v>v_{0},%
\end{array}%
\right.  \label{Equ:J(v)-def}
\end{equation}%
satisfies the following properties:

\begin{enumerate}
\item[$\left( i\right) $] $J$ is a lower isoperimetric function for the
measure $\nu $ given by \emph{(\ref{nun})}.

\item[$\left( ii\right) $] $J$ is concave, increasing and continuous on $%
\left( 0,+\infty \right) .$
\end{enumerate}
\end{proposition}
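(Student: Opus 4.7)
The plan is to reduce the problem to a one-dimensional asymptotic analysis via Proposition~\ref{Prop:BCM01}, and then arrange the constants $c,c',v_{0}$ so that the piecewise function~(\ref{Equ:J(v)-def}) lies below the true isoperimetric profile while still being concave across the junction. First I would observe that the density $\phi (r)=r^{n-1}e^{-1/r^{\alpha }}$ satisfies $\phi (r)\rightarrow 0$ as $r\rightarrow 0$ and, for $n\geq 1$, is strictly increasing on $(0,\infty )$ (since $\phi'(r)=r^{n-2}e^{-1/r^{\alpha}}[(n-1)+\alpha /r^{\alpha }]>0$). Hence Proposition~\ref{Prop:BCM01} applies and gives the exact identity $I_{\nu }(v)=\phi (r(v))$, where $r(v)$ is defined implicitly by $v=\nu ((0,r))$. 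Proving~$(i)$ is therefore equivalent to establishing the two one-sided lower bounds $\phi (r(v))\geq c\,v(\log \tfrac{1}{v})^{1+1/\alpha }$ for small $v$ and $\phi (r(v))\geq c'v^{(n-1)/n}$ for large $v$.

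For the small-$v$ regime I would carry out the substitution $u=1/s^{\alpha }$ in the defining integral to obtain
\begin{equation*}
v=\nu ((0,r))=\frac{1}{\alpha }\int_{1/r^{\alpha }}^{\infty }u^{-n/\alpha -1}e^{-u}\,du,
\end{equation*}
and then apply the standard incomplete-gamma asymptotic $\int_{T}^{\infty }u^{-n/\alpha -1}e^{-u}\,du\sim T^{-n/\alpha -1}e^{-T}$ as $T\to \infty $, with $T=1/r^{\alpha }$. This yields $v\approx \alpha ^{-1}r^{n+\alpha }e^{-1/r^{\alpha }}$ and therefore
\begin{equation*}
\phi (r)=r^{n-1}e^{-1/r^{\alpha }}\approx \alpha \,v\,r^{-\alpha -1}.
\end{equation*}
Taking logarithms in the relation for $v$ shows $\log (1/v)=r^{-\alpha }(1+o(1))$ as $r\to 0$, so $r^{-\alpha -1}\approx (\log \tfrac{1}{v})^{1+1/\alpha }$, and substituting back gives $I_{\nu }(v)\approx \alpha \,v(\log \tfrac{1}{v})^{1+1/\alpha }$. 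Turning the asymptotic $\approx $ into a clean lower bound for all sufficiently small $v$ is routine, and fixes the constant $c$ together with the threshold $v_{0}$.

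For the large-$v$ regime the factor $e^{-1/r^{\alpha }}$ is bounded between two positive constants as soon as $r\geq r_{0}>0$, so $\nu ((0,r))\asymp r^{n}$ and $\phi (r)\asymp r^{n-1}$; this immediately gives $I_{\nu }(v)\asymp v^{(n-1)/n}$ for $v\geq v_{0}$ after possibly enlarging $v_{0}$, and pins down $c'$. Gluing the two bounds then proves $(i)$, provided $v_{0}$ is chosen in the overlap region where both estimates are valid.

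The genuine obstacle is $(ii)$. Monotonicity and continuity of each branch are straightforward, but concavity of $v\mapsto v(\log \tfrac{1}{v})^{1+1/\alpha }$ fails once $\log \tfrac{1}{v}$ drops below $1/\alpha $; a direct computation of the second derivative
\begin{equation*}
\frac{d^{2}}{dv^{2}}\Bigl[v\bigl(\log \tfrac{1}{v}\bigr)^{1+1/\alpha }\Bigr]=\frac{(1+1/\alpha )}{v}\bigl(\log \tfrac{1}{v}\bigr)^{1/\alpha -1}\Bigl[\tfrac{1}{\alpha }-\log \tfrac{1}{v}\Bigr]
\end{equation*}
shows the lower branch is concave only for $v<e^{-1/\alpha }$, so $v_{0}$ must be chosen at most $e^{-1/\alpha }$. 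To ensure concavity across the gluing point I would verify that the left derivative of the lower branch at $v_{0}$ dominates the right derivative of $c'v^{(n-1)/n}$ at $v_{0}$, and then, if necessary, shrink $c$ and $c'$ (while keeping both lower bounds) so that both branches agree at $v_{0}$ and the slope inequality holds. Since the lower branch has logarithmically large slope as $v\to 0$ and the upper branch has slope of order $v_{0}^{-1/n}$, such a choice exists for any fixed $\alpha ,n$. Finally, monotonicity of each branch together with $J(v_{0}^{-})=J(v_{0}^{+})$ yields continuity and monotonicity on all of $(0,\infty )$, completing~$(ii)$.
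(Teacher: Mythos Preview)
Your approach is essentially the paper's: reduce to Proposition~\ref{Prop:BCM01}, analyze the asymptotics of $\phi(r(v))$ for small and large $v$, and then glue the two concave branches. Your incomplete-gamma computation is a clean alternative to the paper's l'H\^opital argument and gives the same relation $v\approx \alpha^{-1}r^{n+\alpha}e^{-1/r^{\alpha}}$.

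There is one small slip in part~$(ii)$. You claim monotonicity of the lower branch is ``straightforward'' and only impose $v_{0}\le e^{-1/\alpha}$ from concavity. But $I'(v)=(\log\tfrac1v)^{1/\alpha}\bigl[\log\tfrac1v-(1+\tfrac1\alpha)\bigr]$, so $v\mapsto v(\log\tfrac1v)^{1+1/\alpha}$ is increasing only for $v<e^{-(1+1/\alpha)}$, which is the binding constraint. You need $v_{0}\le e^{-(1+1/\alpha)}$, not merely $e^{-1/\alpha}$.

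For the junction, your slope-inequality argument (left derivative $\ge$ right derivative, plus value matching) is correct in principle, but the paper's route is tidier and worth knowing: one simply \emph{forces} the glued function to be $C^{1}$ at $v_{0}$ by solving $I(v_{0})=c_{1}v_{0}^{(n-1)/n}$ and $I'(v_{0})=\tfrac{n-1}{n}c_{1}v_{0}^{-1/n}$ simultaneously. Dividing these equations eliminates $c_{1}$ and yields the explicit value $v_{0}=e^{-n(1+1/\alpha)}$, which automatically satisfies $v_{0}\le e^{-(1+1/\alpha)}$ and makes concavity across the junction immediate. This specific $v_{0}$ is also reused verbatim later in the paper (Proposition~\ref{Prop:L-def-properties} and Theorem~\ref{Thm:iso-ineq-Rn}), so the explicit computation is not merely cosmetic.
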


\begin{figure}[htpb]
\begin{center}
\includegraphics[width=0.7\textwidth]{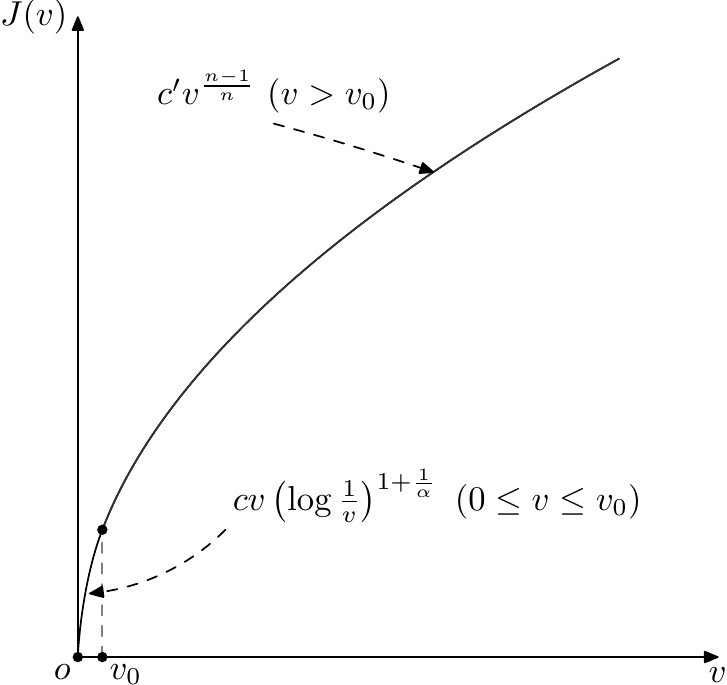}
\end{center}
\caption{Function $J$ defined by (\protect\ref{Equ:J(v)-def})}
\end{figure}

\begin{proof}
Since the function 
\begin{equation*}
\phi (r):=r^{n-1}\mathrm{e}^{-\frac{1}{r^{\alpha }}}
\end{equation*}%
is increasing in $r$, and $\lim_{r\rightarrow 0}\phi (r)=0$, by Proposition %
\ref{Prop:BCM01} we obtain 
\begin{equation*}
I_{\nu }(v)=\phi (R),
\end{equation*}%
where $R>0$ such that 
\begin{equation*}
v=\nu ((0,R))=\int_{0}^{R}\phi (r)\,dr=\int_{0}^{R}r^{n-1}\mathrm{e}^{-\frac{%
1}{r^{\alpha }}}\,dr.
\end{equation*}%
It is clear that, for large enough $R$, we have $\phi (R)\approx R^{n-1}$,
consequently $v\approx R^{n}$. Hence, for large enough $v$ we obtain 
\begin{equation}
I_{\nu }(v)\approx v^{\frac{n-1}{n}}.  \label{Equ:I_nu_v-large}
\end{equation}%
In order to estimate $v$ for small $R$, we shall use the following claim.

\begin{claim}
Let $F$ be a smooth enough positive function on $\left( 0,+\infty \right) $
such that%
\begin{equation}
a:=\lim_{x\rightarrow +\infty }\frac{F(x)F^{\prime \prime }(x)}{F^{\prime
2}\left( x\right) }>0\quad \text{and}\quad \int_{x}^{\infty }\frac{dr}{F(r)}%
<\infty .  \label{Equ:F''F'-condtion}
\end{equation}%
Then 
\begin{equation}
\int_{x}^{\infty }\frac{dr}{F(r)}\sim \frac{a^{-1}}{F^{\prime }(x)}\quad 
\text{as}\ x\rightarrow \infty .  \label{Equ:int-approx-formula}
\end{equation}
\end{claim}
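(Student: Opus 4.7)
The plan is to apply L'Hôpital's rule to the quotient
\[
Q(x) := \frac{\int_x^\infty dr/F(r)}{1/F'(x)},
\]
after first showing that both numerator and denominator tend to $0$ as $x\to\infty$.

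The preliminary step is to establish $F'(x) > 0$ for large $x$ and $F'(x) \to +\infty$. From $F(x)F''(x)/F'(x)^2 \to a > 0$ together with $F > 0$ it follows that $F''(x) > 0$ for $x$ sufficiently large, so $F'$ is eventually strictly monotone and has a limit $L' \in (-\infty, +\infty]$. Each finite possibility contradicts the hypothesis $\int^\infty dr/F(r) < \infty$. If $F'$ eventually stays negative, then $F$ decreases to some $L_F \geq 0$: either $L_F > 0$ (so $1/F$ is bounded below by a positive constant and $\int 1/F = \infty$) or $L_F = 0$ (so for any $M$, $1/F \geq M$ on some unit interval $[X_M, X_M+1]$, giving $\int_{X_M}^{X_M+1} 1/F \geq M$ for arbitrary $M$, hence divergence). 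If instead $F' > 0$ with finite limit $L'$, then $F(x) \leq F(x_0) + L'(x - x_0)$ is at most linear and $1/F$ again fails to be integrable at infinity. So $F'(x) \to +\infty$, which gives $1/F'(x) \to 0$, while the integrability hypothesis gives the numerator $\to 0$.

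We now apply L'Hôpital's rule to the resulting $0/0$ form. Differentiating numerator and denominator in $x$ and invoking the hypothesis on the limit of $FF''/F'^2$, we obtain
\[
\lim_{x\to\infty} Q(x) = \lim_{x\to\infty} \frac{-1/F(x)}{-F''(x)/F'(x)^2} = \lim_{x\to\infty} \frac{F'(x)^2}{F(x)F''(x)} = \frac{1}{a},
\]
which is exactly (\ref{Equ:int-approx-formula}). The main obstacle is the preliminary step: ruling out the pathological monotonicity scenarios in which $F'$ fails to tend to $+\infty$. Once that is in place, the final L'Hôpital computation yields the desired asymptotic essentially immediately.
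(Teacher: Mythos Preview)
Your proof is correct and follows the same approach as the paper: both apply L'H\^opital's rule to the quotient $\int_x^\infty dr/F(r)$ over $1/F'(x)$, arriving at the limit $F'^2/(FF'') \to a^{-1}$. The paper simply writes down the L'H\^opital computation in one line, whereas you additionally verify the preconditions (in particular that $F'(x)\to+\infty$); this extra care is sound but not a different route.
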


Indeed, the estimate (\ref{Equ:int-approx-formula}) follows from
l'Hospital's rule since 
\begin{equation*}
\lim_{x\rightarrow +\infty }\frac{\int_{x}^{\infty }\frac{dr}{F(r)}}{\frac{1%
}{F^{\prime }(x)}}=\lim_{x\rightarrow +\infty }\frac{\frac{1}{F(x)}}{\frac{%
F^{\prime \prime }(x)}{F^{\prime 2}}}=\lim_{x\rightarrow +\infty }\frac{%
F^{\prime 2}(x)}{F(x)F^{\prime \prime }(x)}=a^{-1}.
\end{equation*}%
The function $F\left( x\right) =x^{n+1}\mathrm{e}^{x^{\alpha }}$ clearly
satisfies (\ref{Equ:F''F'-condtion}), and we obtain for small enough $R$%
\begin{equation}
\begin{split}
v =&\int_{0}^{R}r^{n-1}\mathrm{e}^{-r^{-\alpha }}\,dr=\int_{1/R}^{\infty }%
\frac{1}{x^{n-1}}\mathrm{e}^{-x^{\alpha }}\frac{1}{x^{2}}\,dx \\
=&\int_{1/R}^{\infty }\frac{dx}{x^{n+1}\mathrm{e}^{x^{\alpha }}}\approx
\left. \frac{1}{(x^{n+1}\mathrm{e}^{x^{\alpha }})^{\prime }}\right\vert _{x=%
\frac{1}{R}} \\
\approx &\, R^{n+\alpha }\mathrm{e}^{-\frac{1}{R^{\alpha }}}
\end{split}
\label{vap}
\end{equation}
It follows from (\ref{vap}) that 
\begin{equation}
\log v\approx (n+\alpha )\log R-\frac{1}{R^{\alpha }}\approx -\frac{1}{%
R^{\alpha }},  \label{Equ:v-approx-R}
\end{equation}%
consequently 
\begin{equation*}
\phi (R)=R^{n+\alpha }\mathrm{e}^{-\frac{1}{R^{\alpha }}}R^{-(1+\alpha
)}\approx vR^{-(1+\alpha )}\approx v\left( \log \frac{1}{v}\right) ^{1+\frac{%
1}{\alpha }}.
\end{equation*}%
Hence, for small enough $v$, we obtain%
\begin{equation}
I_{\nu }(v)\approx v\left( \log \frac{1}{v}\right) ^{1+\frac{1}{\alpha }}.
\label{Equ:I_nu_v-small}
\end{equation}%
Combining (\ref{Equ:I_nu_v-large}) and (\ref{Equ:I_nu_v-small}), we obtain
that the function $J$ from (\ref{Equ:J(v)-def}) is a lower isoperimetric
function for the measure $\nu $, for sufficiently small constants $v_{0}\in
(0,1)$ and $c,c^{\prime }>0$.

Consider the functions 
\begin{equation*}
I\left( v\right) =v\left( \log \frac{1}{v}\right) ^{1+\frac{1}{\alpha }}\ \ 
\text{and\ \ }I_{1}\left( v\right) =c_{1}v^{\frac{n-1}{n}}.
\end{equation*}%
Let us show that the constants $c_{1}>0$ and $v_{0}\in (0,1)$ can be chosen
so that the following function 
\begin{equation}
\tilde{J}(v):=\left\{ 
\begin{array}{ll}
I(v),\quad & 0\leq v\leq v_{0}, \\ 
I_{1}(v),\quad & v\geq v_{0}%
\end{array}%
\right.  \label{Equ:J(v)-tilde-def}
\end{equation}%
is concave, increasing and continuous on $\mathbb{R}_{+}$. Then the function 
$J=\func{const}\tilde{J}$ with small enough $\func{const}>0$ will satisfy 
both the conditions $\left( i\right) $ and $\left( ii\right) $.

The function $I_{1}\left( v\right) $ is clearly increasing and concave on $%
(0,+\infty )$. For the function $I\left( v\right) $ we have%
\begin{equation*}
\begin{split}
& I^{\prime }(v)=\left( \log \frac{1}{v}\right) ^{\frac{1}{\alpha }}\left(
\log \frac{1}{v}-\left( 1+\frac{1}{\alpha }\right) \right) , \\
& I^{\prime \prime }(v)=-\left( 1+\frac{1}{\alpha }\right) \frac{1}{v}\left(
\log \frac{1}{v}\right) ^{\frac{1}{\alpha }-1}\left( \log \frac{1}{v}-\frac{1%
}{\alpha }\right) ,
\end{split}%
\end{equation*}%
so that $I\left( v\right) $ is increasing and concave on the interval $(0,%
\mathrm{e}^{-(1+\frac{1}{\alpha })}).$ Now we choose $c_{1}>0$ and $%
v_{0}\leq \mathrm{e}^{-(1+\frac{1}{\alpha })}$ so that the function $\tilde{J%
}$ is of the class $C^{1}\left( 0,+\infty \right) $ and, hence, increasing
and concave on $\left( 0,+\infty \right) $. To that end, the following two
identities must be satisfied 
\begin{equation*}
\begin{split}
I\left( v_{0}\right) & =I_{1}\left( v_{0}\right) , \\
I^{\prime }\left( v_{0}\right) & =I_{1}^{\prime }\left( v_{0}\right) ,
\end{split}%
\end{equation*}%
which yields the following equations for $c_{1}$ and $v_{0}$:%
\begin{equation*}
\begin{split}
v_{0}\left( \log \frac{1}{v_{0}}\right) ^{1+\frac{1}{\alpha }}& =c_{1}v_{0}^{%
\frac{n-1}{n}}, \\
\left( \log \frac{1}{v_{0}}\right) ^{\frac{1}{\alpha }}\left( \log \frac{1}{%
v_{0}}-\left( 1+\frac{1}{\alpha }\right) \right) & =\frac{n-1}{n}%
c_{1}v_{0}^{-\frac{1}{n}}.
\end{split}%
\end{equation*}%
Multiplying the second equation by $v_{0}\log \frac{1}{v_{0}}$ and combining
this with the first, we obtain%
\begin{equation}
\log \frac{1}{v_{0}}-\left( 1+\frac{1}{\alpha }\right) =\frac{n-1}{n}\log 
\frac{1}{v_{0}},  \label{Equ:c1Iv0}
\end{equation}%
whence 
\begin{equation}
v_{0}=\mathrm{e}^{-n\left( 1+\frac{1}{\alpha }\right) }.  \label{v0}
\end{equation}%
The value of $c_{1}$ is then trivially determined from the one of the above
equations. The proof is finished by the observation that $v_{0}\leq \mathrm{e%
}^{-\left( 1+\frac{1}{\alpha }\right) }$.
\end{proof}

\begin{proposition}
\label{Prop:J-ineq} The function $J$ defined by \emph{(\ref{Equ:J(v)-def})}
satisfies the following property: there exists some constant $C_{J}>0$ such
that 
\begin{equation}
C_{J}J(ab)\leq bJ(a)+aJ(b)  \label{Equ:J(ab)-ineq}
\end{equation}%
for all $a,b\geq 0.$
\end{proposition}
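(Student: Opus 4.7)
The idea is to divide both sides of (\ref{Equ:J(ab)-ineq}) by $ab$ and work with the auxiliary function $g(v) := J(v)/v$. Since $J$ is concave with $J(0)=0$ (by Proposition \ref{Prop:J-def}), $g$ is non-increasing on $(0,\infty)$, and from (\ref{Equ:J(v)-def}) it has the explicit form
\[
g(v) = \begin{cases} c\,(\log\tfrac1v)^{\beta}, & 0<v\le v_0, \\ c'\,v^{-1/n}, & v>v_0, \end{cases}
\qquad \beta := 1+\tfrac1\alpha>1.
\]
The inequality to be established becomes $C_J\,g(ab)\le g(a)+g(b)$ for $a,b>0$, which I would verify by a short case analysis depending on the position of $a$, $b$, $ab$ relative to $v_0$ and $1$.

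First, if $\max(a,b)\ge 1$, say $b\ge 1$, then $ab\ge a$ and monotonicity of $g$ gives $g(ab)\le g(a)\le g(a)+g(b)$, so any $C_J\le 1$ works. So we may assume $a,b<1$. Second, in the pure log regime $a,b\le v_0$ (hence $ab\le v_0$), applying the convexity inequality $(x+y)^{\beta}\le 2^{\beta-1}(x^{\beta}+y^{\beta})$ (valid since $\beta>1$) to $x=\log(1/a)$, $y=\log(1/b)$ yields $g(ab)\le 2^{\beta-1}(g(a)+g(b))$. Third, in the mixed regime $a\le v_0<b<1$ one has $ab\le v_0 b<v_0$ and $\log(1/b)<\log(1/v_0)\le \log(1/a)$, so $\log(1/(ab))\le 2\log(1/a)$ and $g(ab)\le 2^{\beta}g(a)$.

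The remaining case is $v_0<a,b<1$, where $g(a),g(b)\ge c'$. If $ab>v_0$, dividing the target inequality by $c'(ab)^{-1/n}=c'a^{-1/n}b^{-1/n}$ reduces it to $C_J\le a^{1/n}+b^{1/n}$, and the right-hand side is at least $2v_0^{1/n}$. If instead $ab\le v_0$, then $ab\ge v_0^2$ forces $g(ab)=c(\log 1/(ab))^{\beta}\le c(2\log(1/v_0))^{\beta}$, a fixed constant, while $g(a)+g(b)\ge 2c'$, so the inequality holds with some positive constant. Taking $C_J$ to be the minimum of the finitely many positive constants produced in the four cases yields (\ref{Equ:J(ab)-ineq}).

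The only mildly delicate point is the bookkeeping needed when $a,b$ straddle $v_0$ or when $ab$ crosses $v_0$ while $a,b$ do not; once the cases are properly separated, each sub-estimate reduces to either the elementary convexity inequality for $t^{\beta}$ or the trivial lower bound $a^{1/n}+b^{1/n}\ge 2v_0^{1/n}$ for $a,b\ge v_0$.
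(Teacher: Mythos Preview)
Your proof is correct and follows essentially the same route as the paper: both divide through by $ab$, set $g(v)=J(v)/v$ (the paper calls it $F$), use that $g$ is decreasing, and then do the same case analysis according to the position of $a,b$ relative to $1$ and $v_0$, invoking the convexity bound $(x+y)^{\beta}\le 2^{\beta-1}(x^{\beta}+y^{\beta})$ in the logarithmic regime. The only cosmetic difference is that you split the case $v_0<a,b<1$ into two sub-cases according to whether $ab\gtrless v_0$, whereas the paper handles both at once via $g(ab)\le g(v_0^{2})$ and $g(a)+g(b)\ge 2g(1)$; the content is the same.
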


\begin{proof}
If $a=0$ or $b=0$ then (\ref{Equ:J(ab)-ineq}) is trivial. Assume in the
sequel that $a,b>0$. Consider the function 
\begin{equation*}
F(v)=\frac{J(v)}{v}=\left\{ 
\begin{array}{ll}
c\left( \log \displaystyle \frac{1}{v}\right) ^{1+\frac{1}{\alpha }},\quad & 
0<v\leq v_{0}, \\ 
\displaystyle c^{\prime} v^{ -\frac{1}{n}},\quad & v\geq v_{0}.%
\end{array}%
\right.
\end{equation*}%
Obviously (\ref{Equ:J(ab)-ineq}) is equivalent to 
\begin{equation}
F(ab)\leq C_{J}^{-1}\left( F(a)+F(b)\right) ,  \label{Equ:J(ab)-ineq-F}
\end{equation}%
for all $a,b>0$. Without loss of generality, let us verify (\ref%
{Equ:J(ab)-ineq}) for $a\leq b$. We shall consider the following four cases.

\textbf{Case 1.} Assume that $b\geq 1$. Since $F$ is monotone decreasing and 
$ab\geq a$, we obtain 
\begin{equation}
F(ab)\leq F\left( a\right) \leq F(a)+F(b).  \label{Equ:Fab:case1}
\end{equation}%
In all the next cases we assume $b<1$.

\textbf{Case 2.} Assume that $a\leq v_{0}\leq b$. In this case we have $%
a^{2}\leq ab$ and, hence, 
\begin{equation}
F(ab)\leq F(a^{2}).  \label{Equ:Fab:case2-eq1}
\end{equation}%
Since $a^{2}<a<v_{0}$, we have 
\begin{equation}
F(a^{2})=c\left( \log \frac{1}{a^{2}}\right) ^{1+\frac{1}{\alpha }}=2^{1+%
\frac{1}{\alpha }}F(a).  \label{Equ:Fab:case2-eq2}
\end{equation}%
From (\ref{Equ:Fab:case2-eq1}) and (\ref{Equ:Fab:case2-eq2}) we obtain 
\begin{equation}
F(ab)\leq 2^{1+\frac{1}{\alpha }}F(a)\leq 2^{1+\frac{1}{\alpha }}\left(
F(a)+F(b)\right).  \label{Equ:Fab:case2}
\end{equation}

\textbf{Case 3.} Assume that $v_{0}\leq a\leq b$. In this case we have $%
ab\geq v_{0}^{2}$ and, hence, 
\begin{equation}
F(ab)\leq F(v_{0}^{2}).  \label{Equ:Fab:case3-eq1}
\end{equation}%
On the other hand, since $a,b<1$, we have 
\begin{equation*}
F(a)+F\left( b\right) \geq F(1)+F\left( 1\right) =2F(1).
\end{equation*}%
Combining this with (\ref{Equ:Fab:case3-eq1}) we obtain 
\begin{equation}
F(ab)\leq \frac{F(v_{0}^{2})}{2F(1)}\left( F(a)+F(b)\right) .
\label{Equ:Fab:case3}
\end{equation}

\textbf{Case 4 (main).} Assume that $a\leq b\leq v_{0}$. Since $ab<v_{0}$,
we obtain

\begin{equation}
\begin{split}
F(ab) &=c\left( \log \frac{1}{ab}\right) ^{1+\frac{1}{\alpha }}=c\left( \log 
\frac{1}{a}+\log \frac{1}{b}\right) ^{1+\frac{1}{\alpha }} \\
&\leq 2^{\frac{1}{\alpha }}c\left( \left( \log \frac{1}{a}\right) ^{1+\frac{1%
}{\alpha }}+\left( \log \frac{1}{b}\right) ^{1+\frac{1}{\alpha }}\right) \\
&= 2^{\frac{1}{\alpha }}\left( F\left( a\right) +F\left( b\right) \right)
\label{Equ:Fab:case4}
\end{split}%
\end{equation}

Combining (\ref{Equ:Fab:case1}), (\ref{Equ:Fab:case2}), (\ref{Equ:Fab:case3}%
) and (\ref{Equ:Fab:case4}) we obtain (\ref{Equ:J(ab)-ineq-F}) and hence (%
\ref{Equ:J(ab)-ineq}) with 
\begin{equation}
C_{J}=\min \left( 2^{-\left( 1+\frac{1}{\alpha }\right) },\frac{2F(1)}{%
F(v_{0}^{2})}\right) ,  \label{Equ:C_J-constant}
\end{equation}%
which finishes the proof.
\end{proof}

\bigskip

By Theorem \ref{Thm:iso-func-ineq} and Propositions \ref{Prop:J-def}, \ref%
{Prop:J-ineq} we obtain the following result.

\begin{theorem}
\label{Thm:iso-func-ineq:radial} The function $J$ given by \emph{(\ref%
{Equ:J(v)-def})} is a lower isoperimetric function for the measure $d\nu
(r)=r^{n-1}\mathrm{e}^{-\frac{1}{r^{\alpha }}}dr$ on $\mathbb{R}_{+}$.
Moreover, for any non-negative continuously differentiable function $f$ on $%
\mathbb{R}_{+}$ with bounded support we have 
\begin{equation}
C_{J}J\left( \int_{\mathbb{R}_{+}}f\,d\nu \right) \leq \int_{\mathbb{R}%
_{+}}J(f)\,d\nu +\int_{\mathbb{R}_{+}}|f^{\prime }|d\nu ,
\label{Ineq:func-iso-radial}
\end{equation}%
where $C_{J}$ is the constant from \emph{(\ref{Equ:J(ab)-ineq})}.
\end{theorem}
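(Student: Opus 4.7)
The statement is essentially a direct corollary of the three preceding results: Theorem \ref{Thm:iso-func-ineq-special} (which is the specialized one-dimensional functional isoperimetric inequality), Proposition \ref{Prop:J-def} (which identifies $J$ as a concave lower isoperimetric function for $\nu$), and Proposition \ref{Prop:J-ineq} (which gives the multiplicative inequality $C_J J(ab) \leq b J(a) + a J(b)$). So the plan is simply to verify that the hypotheses of Theorem \ref{Thm:iso-func-ineq-special} are satisfied for the measure $d\nu(r) = r^{n-1} e^{-1/r^\alpha}\,dr$ with the choice $I = J$, and then invoke that theorem.

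First I would dispose of the isoperimetric claim: by Proposition \ref{Prop:J-def}(i), the function $J$ defined in (\ref{Equ:J(v)-def}) is a lower isoperimetric function for $\nu$. This gives the first assertion of the theorem with no further work.

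Next I would verify the two hypotheses of Theorem \ref{Thm:iso-func-ineq-special}. Hypothesis (i) requires a constant $C > 0$ such that $C I(ab) \leq b I(a) + a I(b)$ for all $a,b \geq 0$; this is exactly Proposition \ref{Prop:J-ineq} with $C = C_J$. Hypothesis (ii) requires $I = J$ to be a concave lower isoperimetric function for $\nu$; both properties are furnished by Proposition \ref{Prop:J-def}(i) and (ii). Note that $\phi(r) = r^{n-1} e^{-1/r^\alpha}$ is continuous and non-negative on $\mathbb{R}_+$, so the measure $\nu$ fits the setting of Theorem \ref{Thm:iso-func-ineq-special}.

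Applying Theorem \ref{Thm:iso-func-ineq-special} with $I = J$ and $C = C_J$ now yields, for every non-negative continuously differentiable function $f$ on $\mathbb{R}_+$ with bounded support,
\begin{equation*}
C_J J\!\left(\int_{\mathbb{R}_+} f \, d\nu\right) \leq \int_{\mathbb{R}_+} J(f)\,d\nu + \int_{\mathbb{R}_+} |f'|\,d\nu,
\end{equation*}
which is (\ref{Ineq:func-iso-radial}). Since everything reduces to assembling already-proved statements, there is no substantive obstacle: the only point worth double-checking is that the constant $C_J$ appearing in the multiplicative inequality of Proposition \ref{Prop:J-ineq} is exactly the one surfacing in the conclusion, so that the reader sees the consistent use of notation across the section.
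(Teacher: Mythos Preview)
Your proposal is correct and matches the paper's own approach: the paper simply states that the result follows from Theorem \ref{Thm:iso-func-ineq} together with Propositions \ref{Prop:J-def} and \ref{Prop:J-ineq}, which is precisely the assembly you carry out (using the specialized form, Theorem \ref{Thm:iso-func-ineq-special}). There is nothing to add.
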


\section{Functional isoperimetric inequality on a sphere}

\label{Sec:ineq:angular} We shall use the following result of \cite{Bar02}
about isoperimetric inequalities for probability measures that we state here
in a specific setting adapted to our needs.

\begin{theorem}
\label{Lmm:Bar02Thm2}\emph{(\cite[Theorem 2]{Bar02})} Let $L$ be a
non-negative function on $\left[ 0,1\right] $ with the following properties:

\begin{enumerate}
\item[$\left( i\right) $] $L$ is continuous, concave and symmetric with
respect to $1/2$, and $L(0)=L\left( 1\right) =0.$

\item[$\left( ii\right) $] For some constant $C_{L}>0$ and for all $a,b\in
\lbrack 0,1]$, 
\begin{equation}
C_{L}L(ab)\leq bL(a)+aL(b).  \label{Equ:Bar02Thm2-L-ineq:Properties}
\end{equation}
\end{enumerate}

Let $\left( N,\sigma \right) $ be a weighted manifold and $\sigma \left(
N\right) =1$. If $L$ is a lower isoperimetric function for the measure $%
\sigma $, then, for any locally Lipschitz function $f\colon N\rightarrow
\lbrack 0,1] $, we have 
\begin{equation}
C_{L}L\left( \int_{N}f\,d\sigma \right) \leq \int_{N}L(f)\,d\sigma
+\int_{N}|\nabla f|\,d\sigma .  \label{Equ:Bar02Thm2-L-ineq:func}
\end{equation}
\end{theorem}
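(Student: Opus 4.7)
The plan is to mirror the strategy of Theorem \ref{Thm:iso-func-ineq} from Section \ref{Sec:1dim-general-ineq}: first handle elementary (two-level) step-like functions using the isoperimetric hypothesis, then extend to general step-like functions by a decomposition argument, and finally pass to locally Lipschitz $f$ by approximation. The new feature compared with Section \ref{Sec:1dim-general-ineq} is that we now live on a general weighted manifold, so ``intervals'' must be replaced by superlevel sets and the fundamental theorem of calculus must be replaced by the coarea formula.

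Step 1 (elementary case). Take first $f$ of the form $b\Eins_A$ with $b\in[0,1]$ and $\sigma(A)=a$, interpreted as the $\varepsilon\to0$ limit of the Lipschitz truncations $b\min(1,d(\cdot,A^{c})/\varepsilon)$. For such $f$ one computes $\int f\,d\sigma=ab$ and $\int L(f)\,d\sigma=aL(b)$ (using $L(0)=0$), while the isoperimetric hypothesis gives $\int|\nabla f|\,d\sigma\ge b\,\sigma^{+}(A)\ge bL(a)$. Hypothesis $(ii)$ applied to the pair $(a,b)$ yields exactly \eqref{Equ:Bar02Thm2-L-ineq:func}.

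Step 2 (reduction to 1D via coarea). For a locally Lipschitz $f\colon N\to[0,1]$ set $F(t):=\sigma(\{f>t\})$, a non-increasing function on $[0,1]$ with $F(1)=0$. The coarea formula and Fubini give
\begin{equation*}
\int_N f\,d\sigma=\int_0^1 F(t)\,dt,\qquad \int_N L(f)\,d\sigma=-\int_0^1 L(t)F'(t)\,dt,
\end{equation*}
where the second identity follows by integration by parts from $L(0)=L(1)=0$. The isoperimetric hypothesis, combined with the coarea formula, also gives $\int|\nabla f|\,d\sigma=\int_0^1\sigma^{+}(\{f>t\})\,dt\ge\int_0^1 L(F(t))\,dt$. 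Thus it suffices to prove the one-dimensional inequality
\begin{equation*}
C_L\, L\Bigl(\int_0^1 F(t)\,dt\Bigr)\le -\int_0^1 L(t)F'(t)\,dt+\int_0^1 L(F(t))\,dt
\end{equation*}
for every non-increasing $F\colon[0,1]\to[0,1]$ with $F(1)=0$.

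Step 3 (step functions and limit). For a one-step $F=a\Eins_{[0,r]}$ the displayed 1D inequality is literally hypothesis $(ii)$ read as $C_L L(ar)\le rL(a)+aL(r)$. For general non-increasing step $F$ we apply the same convex-combination decomposition used in Lemma \ref{Lem:step-f-rewrite} (adapted to the non-increasing setting): express $F$ as a convex combination of elementary two-step functions with common integral, chosen so that the total-variation term (which here is $\sum_k(a_k-a_{k+1})L(r_k)$) splits \emph{linearly} under the combination. Concavity of $L$ together with Jensen's inequality then glues the elementary instances into the multi-step inequality. Finally, approximate $F$ by such step functions on dyadic partitions of $[0,1]$ and pass to the limit by monotone/dominated convergence; continuity of $L$ takes care of the remaining terms.

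The main obstacle is Step 3, namely isolating the ``linear in the convex combination'' property of the gradient term in the presence of a symmetric $L$ vanishing at both endpoints; unlike in Section \ref{Sec:1dim-general-ineq} the distribution function $F$ has no natural ``support'' to peel off, so the merging procedure of Lemma \ref{Lem:step-f-rewrite} must be executed on adjacent level values of $F$ rather than on intervals. The symmetry of $L$ about $1/2$ together with the subadditivity \eqref{Equ:Bar02Thm2-L-ineq:Properties} is precisely what makes this merging go through and compensates for the asymmetric roles of $a$ and $b$ in Step 1.
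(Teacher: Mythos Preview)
This theorem is not proved in the paper at all: it is quoted verbatim from \cite[Theorem~2]{Bar02} and used as a black box. So there is no ``paper's own proof'' to compare against; you are attempting to supply a proof that the authors deliberately outsourced.

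Your Steps~1 and~2 are correct and in fact constitute the standard opening of Barthe's argument: the coarea reduction to the one-dimensional inequality
\[
C_L\,L\Bigl(\int_0^1 F\Bigr)\le \int_{(0,1]} L(t)\,d(-F)(t)+\int_0^1 L(F(t))\,dt
\]
for non-increasing $F:[0,1]\to[0,1]$ is exactly right, and the verification of the elementary case $F=a\Eins_{[0,r)}$ as hypothesis~(ii) is clean.

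Step~3, however, has a real gap. If you run the decomposition of Lemma~\ref{Lem:step-f-rewrite} on a non-increasing step function $F$, the elementary piece $g=c\,\Eins_{[0,r_1)}$ that you peel off has height $c=\bigl(\int F\bigr)/r_1$, and nothing prevents $c>1$ (take e.g.\ $F=\Eins_{[0,1/2)}+\tfrac12\Eins_{[1/2,1)}$, giving $\int F=3/4$ and $c=3/2$). But then $L(c)$ is undefined and, more to the point, the elementary inequality $C_L L(cr_1)\le cL(r_1)+r_1 L(c)$ is simply unavailable, since hypothesis~(ii) is only assumed for $a,b\in[0,1]$. The same obstruction arises for the alternative ``layer--cake'' decomposition with supports $[0,r_k)$ and common integral: the heights $m/r_k$ again exceed $1$ whenever $r_k<m$. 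Your closing remark that ``the symmetry of $L$ about $1/2$ \dots\ is precisely what makes this merging go through'' does not explain how to keep the heights inside $[0,1]$, and I do not see that it does.

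In short: the coarea reduction is the right first move, but the induction of Section~\ref{Sec:1dim-general-ineq} does not transplant directly to the compact range $[0,1]$; Barthe's own proof of the resulting one-dimensional inequality proceeds differently, and you would need either to consult it or to find a decomposition that respects the constraint $F_k\le 1$.
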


Let $\sigma_{n-1}$ denote the canonical spherical measure on $\mathbb{S}%
^{n-1}$. Set $\omega _{n}=\sigma _{n-1}\left( \mathbb{S}^{n-1}\right) $ and
consider the normalized spherical measure 
\begin{equation*}
\tilde{\sigma}_{n-1}=\frac{1}{\omega_{n}}\sigma _{n-1}.
\end{equation*}%
Before we apply Theorem \ref{Lmm:Bar02Thm2} to $(\mathbb{S}^{n-1},\tilde{%
\sigma}_{n-1})$, we need to construct a function $L$ satisfying appropriate
conditions.

\begin{proposition}
\label{Prop:L-def-properties} Choose some $\beta >1$ and $n\geq 1$, set $%
v_{0}=\mathrm{e}^{-n\beta }$ and consider the functions $I$ and $L$ on $%
\left[ 0,1\right] $ defined by%
\begin{equation}
I\left( v\right) =v\left( \log \frac{1}{v}\right) ^{\beta }  \label{Idef}
\end{equation}%
and%
\begin{equation}
L\left( v\right) =c\left\{ 
\begin{array}{ll}
I\left( v\right) , & 0\leq v\leq v_{0}, \\ 
I\left( v_{0}\right) , & v_{0}< v< 1-v_{0}, \\ 
I\left( 1-v\right) , & 1-v_{0}\leq v\leq 1,%
\end{array}%
\right.  \label{Equ:L-def}
\end{equation}%
where $c$ is a positive constant. Then $L$ satisfies the following
properties:

\begin{enumerate}
\item[$\left( i\right) $] $L$ is continuous, concave and symmetric with
respect to $1/2$.

\item[$\left( ii\right) $] If $c$ is sufficiently small, then $L$ is a lower
isoperimetric function for the measure $\sigma _{n-1}$ on $\mathbb{S}^{n-1}$.

\item[$\left( iii\right) $] There exists a constant $C_{L}>0$ such that 
\begin{equation}
C_{L}L(ab)\leq bL(a)+aL(b)  \label{Equ:Ineq-Lab}
\end{equation}%
for all $0<a,b<1$.
\end{enumerate}
\end{proposition}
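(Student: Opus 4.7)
The proof decomposes naturally into the three properties (i), (ii), (iii), which I would establish in turn. For (i), continuity and symmetry about $1/2$ are built into the piecewise definition, so only concavity requires work. A direct computation gives
\[ I''(v) = -\frac{\beta}{v}\left(\log\frac{1}{v}\right)^{\beta-2}\left(\log\frac{1}{v} - (\beta-1)\right), \]
so $I$ is concave on $\{v : \log(1/v)\ge \beta-1\}$, in particular on $(0,v_0]$, because $v_0=e^{-n\beta}\le e^{-(\beta-1)}$ whenever $n\ge 1$. Moreover
\[ I'(v_0) = (n\beta)^{\beta-1}(n\beta-\beta)=\beta(n-1)(n\beta)^{\beta-1}\ge 0, \]
so at the matching point $v_0$ the left derivative of $L$ (which equals $cI'(v_0)$) dominates the zero right derivative coming from the constant middle piece. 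Thus the derivative of $L$ is non-increasing across $v_0$, and by symmetry also across $1-v_0$, which gives concavity on $[0,1]$.

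For (ii), I would invoke the classical isoperimetric inequality on the sphere: with respect to the normalized measure $\tilde\sigma_{n-1}$, the minimizers are geodesic caps, which yields a lower isoperimetric function $\tilde I$ satisfying $\tilde I(v)\ge c_n\, v^{(n-2)/(n-1)}$ for small $v$ when $n\ge 3$, equals the constant $1/\pi$ when $n=2$, and is trivial when $n=1$. In each case
\[ \lim_{v\to 0^+}\frac{I(v)}{\tilde I(v)}=0, \]
while $I$ is bounded on $[v_0,1-v_0]$ and $\tilde I$ is bounded below there by a positive constant. Hence choosing $c>0$ sufficiently small forces $L\le \tilde I$ pointwise on $[0,1/2]$, and since $L$ and $\tilde I$ are both symmetric about $1/2$, the bound extends to $[0,1]$. (In view of the context preceding this proposition, the relevant measure is the normalized $\tilde\sigma_{n-1}$; otherwise $L$, which is defined on $[0,1]$, cannot be compared directly with an isoperimetric function on a space of total mass $\omega_n$.)

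For (iii), I would mirror the case analysis of Proposition \ref{Prop:J-ineq}. Setting $F(v):=L(v)/v$, the required inequality (\ref{Equ:Ineq-Lab}) is equivalent to $F(ab)\le C_L^{-1}(F(a)+F(b))$ for $a,b\in(0,1)$. The function $F$ is continuous and monotone decreasing on $(0,1)$, with $F(0^+)=\infty$, $F(1)=0$, and is uniformly bounded by $F(v_0)=c(n\beta)^\beta$ on $[v_0,1]$. Assuming without loss of generality $a\le b$, the decisive case is $b\le v_0$: then $ab\le v_0$ as well, and using $(x+y)^\beta\le 2^{\beta-1}(x^\beta+y^\beta)$ for $\beta\ge 1$ one obtains
\[ F(ab)=c\left(\log\frac{1}{a}+\log\frac{1}{b}\right)^\beta \le 2^{\beta-1}(F(a)+F(b)). \]
The remaining cases (one or both of $a,b$ exceeding $v_0$, or $ab$ falling in the middle or symmetric upper piece of $L$) are handled by combining monotonicity of $F$, its uniform upper bound $F(v_0)$ on $[v_0,1]$, and the symmetry $L(1-v)=L(v)$, exactly as in the four-case split of Proposition \ref{Prop:J-ineq}.

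The principal obstacle is part (iii): although the main case $a,b\le v_0$ is immediate from the elementary convexity inequality, careful bookkeeping is required in the mixed regimes near the transition points $v_0$ and $1-v_0$ in order to produce a single positive constant $C_L$ depending only on $\beta$, $n$, and $v_0$. A secondary technical point is ensuring that the sphere lower bound used in (ii) is strong enough in the low-dimensional cases $n=1,2$, where the exponent $(n-2)/(n-1)$ degenerates and one must appeal to the explicit form of the sphere isoperimetric function.
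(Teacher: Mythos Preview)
Your treatment of (i) and (ii) matches the paper's argument. For (iii), however, your appeal to ``exactly as in the four-case split of Proposition~\ref{Prop:J-ineq}'' hides a genuine difficulty. In that proposition the auxiliary function $F(v)=J(v)/v$ satisfies $F(1)=c'>0$, and this positivity is precisely what makes Case~3 there work: when $v_0\le a\le b<1$ one bounds $F(a)+F(b)\ge 2F(1)>0$ against the fixed constant $F(v_0^2)$. Here, by contrast, $F(v)=L(v)/v$ has $F(1)=0$, so that argument collapses. Concretely, when both $a$ and $b$ lie in $(1-v_0,1)$, the sum $F(a)+F(b)$ can be arbitrarily small, and you cannot conclude by comparing it to a fixed lower bound.

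The paper handles this regime quite differently: it first observes by continuity that the ratio $F(ab)/(F(a)+F(b))$ can only blow up near the boundary of $(0,1)^2$, and then treats the two boundary cases separately. The case $a\le v_0$ proceeds via the convexity inequality as you indicate, but requires in addition the non-obvious check that $(\log(1/b))^\beta\le \mathrm{const}\cdot F(b)$ uniformly in $b\in(0,1)$, which amounts to verifying $\lim_{b\to 1^-}(\log(1/b))^\beta/F(b)<\infty$. The case $b\ge 1-v_0$ is handled by the substitution $G(x)=F(1-x)$, reducing the claim to the subadditivity-type estimate $G(x+y)\le C(G(x)+G(y))$ for $G(x)\approx x(\log(1/x))^\beta$ on $(0,v_0]$; this is where the symmetry $L(1-v)=L(v)$ that you mention actually enters, but it does not fall out of the Proposition~\ref{Prop:J-ineq} template. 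Your final paragraph correctly flags the neighborhood of $1-v_0$ as requiring care, but the specific mechanism you propose (transplanting the four cases verbatim) does not survive the loss of $F(1)>0$.
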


\begin{figure}[tph]
\begin{center}
\includegraphics[width=0.8\textwidth]{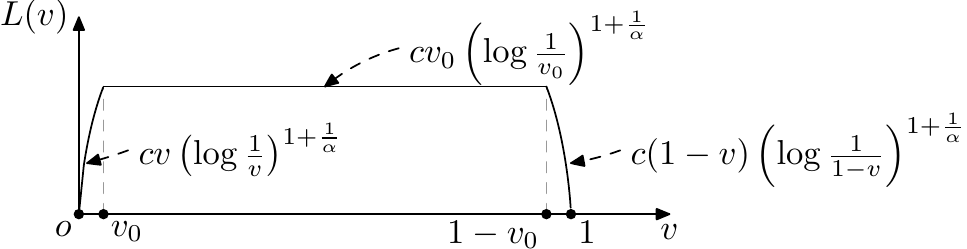}
\end{center}
\caption{Function $L$ defined by (\protect\ref{Equ:L-def})}
\end{figure}

\begin{remark}
\RM We shall use Proposition \ref{Prop:L-def-properties} with $\beta =1+%
\frac{1}{\alpha }$, where $\alpha $ is the constant in the definitions (\ref%
{nun}) and (\ref{mu}) of the measures $\nu $ and $\mu $, respectively. By
Theorem \ref{Thm:iso-func-ineq:radial} we have a lower isoperimetric
function $J$ for the measure $\mu $ that is given by (\ref{Equ:J(v)-def}).
In the next section we shall combine the isoperimetric functions $J$ and $L$
in order to obtain a lower isoperimetric function of the measure $\mu $.
Note that the parameter $v_{0}$ in (\ref{Equ:J(v)-def}) and (\ref{Equ:L-def}%
) has the same value given by (\ref{v0}). It will be convenient to assume
that the constants $c$ in (\ref{Equ:J(v)-def}) and (\ref{Equ:L-def}) also
have the same value, which can always be achieved. Hence, we have 
\begin{equation}
J(v)=L(v)=cI\left( v\right) ,\quad \text{for\ all}\ 0\leq v\leq v_{0}.
\label{JLI}
\end{equation}
\end{remark}

\begin{proof}
$(i)$ From (\ref{Equ:L-def}) it is clear that $L$ is continuous and
symmetric. The concavity follows from Proposition \ref{Prop:J-def}.

$(ii)$ Set 
\begin{equation*}
I_{\mathbb{S}^{n-1}}(v)=\left\{ 
\begin{array}{ll}
\displaystyle c_{n}v^{\frac{n-2}{n-1}},\quad & 0\leq v\leq 1/2, \\ 
\displaystyle c_{n}(1-v)^{\frac{n-2}{n-1}},\quad & 1/2<v\leq 1,%
\end{array}%
\right.
\end{equation*}%
where $c_{n}>0$ is a constant. It is well known that $I_{\mathbb{S}^{n-1}}$
is a lower isoperimetric function on $\mathbb{S}^{n-1}$ with respect to $%
\sigma _{n-1}$, provided $c_{n}$ is sufficiently small.

If $c>0$ is sufficiently small then we have for all $v\in (0,\frac{1}{2})$ 
\begin{equation*}
cv\left( \log \frac{1}{v}\right) ^{\beta }\leq c_{n}v^{\frac{n-2}{n-1}}
\end{equation*}%
and, hence, $L\left( v\right) \leq I_{\mathbb{S}^{n-1}}\left( v\right) $ for
all $v\in \left( 0,1\right) $. Consequently, $L$ is a lower isoperimetric
function.

$(iii)$ If $a$ or $b$ are equal to $0$ or $1$, then (\ref{Equ:Ineq-Lab}) is
trivially satisfied, so we can assume in the sequel $a,b\in \left(
0,1\right).$ Define $F\colon (0,1)\to \mathbb{R} $ by%
\begin{equation}
F\left( v\right) =\left\{ 
\begin{array}{ll}
\displaystyle\left( \log \frac{1}{v}\right) ^{\beta }, & 0<v\leq v_{0}, \\ 
\displaystyle\left( \log \frac{1}{v_{0}}\right) ^{\beta }, & v_{0}\leq v\leq
1-v_{0}, \\ 
\displaystyle \frac{1-v}{v_{0}}\left( \log \frac{1}{1-v}\right) ^{\beta }, & 
1-v_{0}\leq v<1.%
\end{array}%
\right.  \label{Equ:L-def1}
\end{equation}%
Then $F$ is positive, continuous and decreasing on $\left( 0,1\right) $, and 
\begin{equation*}
\frac{L\left( v\right) }{v}\approx F\left( v\right) \ \ \text{for }v\in
\left( 0,1\right) .
\end{equation*}%
Hence, (\ref{Equ:Ineq-Lab}) is equivalent to 
\begin{equation}
F(ab)\leq \func{const}\left( F(a)+F(b)\right) ,  \label{Equ:Ineq-L-Fab}
\end{equation}%
for all $a,b\in \left( 0,1\right) .$

Without loss of generality, we can assume that $a\leq b.$ Since the quotient 
$\frac{F\left( ab\right) }{F\left( a\right) +F\left( b\right) }$ can blow up
only on the boundary of the square $\left( 0,1\right) ^{2}$, it suffices to
prove (\ref{Equ:Ineq-L-Fab}) in two cases: $a\leq v_{0}$ and $b\geq 1-v_{0}.$

\textbf{Case} $a\leq v_{0}.$ Then also $ab\leq v_{0}$, and we obtain by (\ref%
{Equ:L-def1}) 
\begin{equation*}
\begin{split}
F(ab) &=\left( \log \frac{1}{ab}\right) ^{\beta }=\left( \log \frac{1}{a}%
+\log \frac{1}{b}\right) ^{\beta } \\
&\leq 2^{\beta -1}\left( \left( \log \frac{1}{a}\right) ^{\beta }+\left(
\log \frac{1}{b}\right) ^{\beta }\right) \\
&= 2^{\beta -1}F\left( a\right) +2^{\beta -1}\left( \log \frac{1}{b}\right)
^{\beta }.
\end{split}%
\end{equation*}
We are left to show that, for all $b\in \left( 0,1\right) $, 
\begin{equation*}
\left( \log \frac{1}{b}\right) ^{\beta }\leq \func{const}F\left( b\right).
\end{equation*}%
For $b<v_{0}$ this is an identity, and the case $b\geq v_{0}$ will follow if
we verify that%
\begin{equation*}
\lim_{v\rightarrow 1^{-}}\frac{\left( \log \frac{1}{v}\right) ^{\beta }}{%
F\left( v\right) }<\infty,
\end{equation*}%
which amounts to showing that%
\begin{equation*}
\lim_{v\rightarrow 1^{-}}\frac{\left( \log \frac{1}{v}\right) ^{\beta }}{%
\left( 1-v\right) \left( \log \frac{1}{1-v}\right) ^{\beta }}<\infty .
\end{equation*}%
Since $\log \frac{1}{v}\sim 1-v\ $as $v\rightarrow 1^{-}$, we see that the
above limit is equal to 
\begin{equation*}
\lim_{v\rightarrow 1^{-}}\frac{\left( 1-v\right) ^{\beta -1}}{\left( \log 
\frac{1}{1-v}\right) ^{\beta }}=0,
\end{equation*}%
which finishes the proof in this case.

\textbf{Case} $b\geq 1-v_{0}.$ We can assume that $a\geq v_{0}.$ Consider
the function 
\begin{equation*}
G\left( x\right) =F\left( 1-x\right),\quad x\in(0,1),
\end{equation*}%
and restate (\ref{Equ:Ineq-L-Fab}) as follows:%
\begin{equation*}
G\left( \left( x+y\right) -xy\right) \leq \func{const}\left( G\left(
x\right) +G\left( y\right) \right)
\end{equation*}%
for all $y\leq v_{0}$ and $x\leq 1-v_{0}$.\ Since $G$ is increasing, it
suffices to prove that 
\begin{equation*}
G\left( x+y\right) \leq \func{const}\left( G\left( x\right) +G\left(
y\right) \right) ,
\end{equation*}%
and the latter inequality is obvious, since%
\begin{equation*}
G\left( x\right) =\frac{1}{v_{0}}\left\{ 
\begin{array}{ll}
\displaystyle x\left( \log \frac{1}{x}\right) ^{\beta }, & x\leq v_{0}, \\ 
\displaystyle v_{0}\left( \log \frac{1}{v_{0}}\right) ^{\beta }, & v_{0}\leq
x\leq 1-v_{0}.%
\end{array}%
\right.
\end{equation*}
{\hfill }
\end{proof}

Applying Theorem \ref{Lmm:Bar02Thm2} we obtain the following result.

\begin{theorem}
\label{Thm:func-ineq-iso-sphere} Let $L$ be defined as in \emph{(\ref%
{Equ:L-def})}. Then any $C^{1}$ function $f\colon \mathbb{S}%
^{n-1}\rightarrow \lbrack 0,1]$ satisfies the following inequality 
\begin{equation}
\omega _{n}C_{L}L\left( \frac{1}{\omega _{n}}\int_{\mathbb{S}%
^{n-1}}f\,d\sigma _{n-1}\right) \leq \int_{\mathbb{S}^{n-1}}L(f)\,d\sigma
_{n-1}+\int_{\mathbb{S}^{n-1}}|\nabla f|\,d\sigma _{n-1},
\label{Equ:func-ineq-iso-sphere:Thm}
\end{equation}%
where $\omega _{n}=\sigma _{n-1}\left( \mathbb{S}^{n-1}\right) $ and $C_{L}$
is the constant from \emph{(\ref{Equ:Ineq-Lab})}.
\end{theorem}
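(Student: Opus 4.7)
The plan is to apply Theorem \ref{Lmm:Bar02Thm2} to the probability space $(\mathbb{S}^{n-1}, \tilde{\sigma}_{n-1})$, where $\tilde{\sigma}_{n-1} = \omega_n^{-1}\sigma_{n-1}$, using the function $L$ supplied by Proposition \ref{Prop:L-def-properties}, and then to unwind the normalization by multiplying through by $\omega_n$.

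First I would verify that $L$ satisfies every structural hypothesis of Theorem \ref{Lmm:Bar02Thm2}. Continuity, concavity, symmetry about $1/2$, and the endpoint conditions $L(0)=L(1)=0$ are all delivered by Proposition \ref{Prop:L-def-properties}(i) together with the definition (\ref{Equ:L-def}) (the middle plateau and the symmetric endpoint behavior are built into the formula). The multiplicative inequality $C_L L(ab)\le bL(a)+aL(b)$ is precisely Proposition \ref{Prop:L-def-properties}(iii). Finally, Proposition \ref{Prop:L-def-properties}(ii) asserts that, for sufficiently small constant $c$ in (\ref{Equ:L-def}), the function $L$ is a lower isoperimetric function on $\mathbb{S}^{n-1}$; after rescaling into the probability measure $\tilde{\sigma}_{n-1}$, which only changes both sides of the isoperimetric inequality by the factor $\omega_n^{-1}$, $L$ remains (possibly after readjusting $c$) a lower isoperimetric function for $\tilde{\sigma}_{n-1}$.

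With all hypotheses in place, Theorem \ref{Lmm:Bar02Thm2} applied on $(\mathbb{S}^{n-1},\tilde{\sigma}_{n-1})$ yields, for every locally Lipschitz (and in particular every $C^{1}$) map $f\colon \mathbb{S}^{n-1}\to [0,1]$,
\begin{equation*}
C_L\, L\!\left(\int_{\mathbb{S}^{n-1}} f\,d\tilde{\sigma}_{n-1}\right) \le \int_{\mathbb{S}^{n-1}} L(f)\,d\tilde{\sigma}_{n-1} + \int_{\mathbb{S}^{n-1}} |\nabla f|\,d\tilde{\sigma}_{n-1}.
\end{equation*}
Substituting $d\tilde{\sigma}_{n-1}=\omega_n^{-1}d\sigma_{n-1}$ inside the argument of $L$ on the left and inside the integrals on the right, and then multiplying both sides by $\omega_n$, gives exactly (\ref{Equ:func-ineq-iso-sphere:Thm}).

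The main obstacle, such as it is, is the bookkeeping step of transferring the isoperimetric property of $L$ from the unnormalized measure $\sigma_{n-1}$ to the probability measure $\tilde{\sigma}_{n-1}$ as required by Theorem \ref{Lmm:Bar02Thm2}; but since $\sigma_{n-1}^{+}(A)\ge L(\sigma_{n-1}(A))$ translates directly into $\tilde{\sigma}_{n-1}^{+}(A)\ge \omega_n^{-1}L(\omega_n\tilde{\sigma}_{n-1}(A))$, and the constant $c$ in (\ref{Equ:L-def}) can be chosen as small as needed without disturbing any of the properties (i)--(iii) used above, this reduces to a choice of constants. Everything else in the proof is a direct citation of Theorem \ref{Lmm:Bar02Thm2} followed by the rescaling by $\omega_n$.
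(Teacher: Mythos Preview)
Your proposal is correct and matches the paper's own proof essentially line for line: apply Theorem~\ref{Lmm:Bar02Thm2} on $(\mathbb{S}^{n-1},\tilde{\sigma}_{n-1})$ with the function $L$ from Proposition~\ref{Prop:L-def-properties}, then multiply the resulting inequality by $\omega_n$. Your explicit treatment of the normalization (passing the isoperimetric property of $L$ from $\sigma_{n-1}$ to $\tilde{\sigma}_{n-1}$) is in fact more careful than the paper, which simply cites ``the properties of $L$ stated in Proposition~\ref{Prop:L-def-properties}'' without comment.
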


\begin{proof}
(\ref{Equ:func-ineq-iso-sphere:Thm}) is a direct consequence of the
following inequality 
\begin{equation*}
C_{L}L\left( \int_{\mathbb{S}^{n-1}}f\,d\tilde{\sigma}_{n-1}\right) \leq
\int_{\mathbb{S}^{n-1}}L(f)\,d\tilde{\sigma}_{n-1}+\int_{\mathbb{S}%
^{n-1}}|\nabla f|\,d\tilde{\sigma}_{n-1},
\end{equation*}%
that in turn follows from Theorem \ref{Lmm:Bar02Thm2} and the properties of $%
L$ stated in Proposition \ref{Prop:L-def-properties}.
\end{proof}

\section{Isoperimetric inequality for a weighted measure on $\mathbb{R}%
^{n}\setminus \{0\}$}

\label{Sec:ineq:combine} In this section we again consider the measure 
\begin{equation*}
d\mu (x)=\mathrm{e}^{-\frac{1}{|x|^{\alpha }}}dx
\end{equation*}%
on $M:=\mathbb{R}^{n}\setminus \{0\}$, where $\alpha >0$. Consider also the
radial part of $\mu $, that is, the measure $\nu $ on $\mathbb{R}_{+}$ given
by 
\begin{equation*}
d\nu (r)=r^{n-1}\mathrm{e}^{-\frac{1}{r^{\alpha }}}dr.
\end{equation*}%
For any $R>0$, set%
\begin{equation*}
B_{R}:=\{x\in \mathbb{R}^{n}\setminus \{0\}\colon \left\vert x\right\vert
<R\}.
\end{equation*}%
Let $\bar{B}_{R}$ denote the closure of $B_{R}$ in $\mathbb{R}^{n}$, i.e. 
\begin{equation*}
\bar{B}_{R}:=\{x\in \mathbb{R}^{n}\colon \left\vert x\right\vert \leq R\}.
\end{equation*}

\begin{theorem}
\label{Thm:iso-ineq-Rn} Let $f$ be a $C^{1}$ function on $M$ with support in 
$\bar{B}_{R}$ for some $R>0$. Assume that 
\begin{equation}
0\leq f\leq \frac{v_{0}}{\nu ((0,R))}\wedge v_{0},
\label{Equ:Thm:iso-ineq-Rn-1}
\end{equation}%
where $v_{0}=\mathrm{e}^{-n\left( 1+\frac{1}{\alpha }\right) }$ (cf. \emph{(%
\ref{v0})}). Then 
\begin{equation}
\omega _{n}C_{J}C_{L}I\left( \frac{1}{\omega _{n}}\int_{M}f\,d\mu \right)
\leq \int_{M}I(f)\,d\mu +\frac{1}{c}\left( 1+C_{J}R\right) \int_{M}|\nabla
f|\,d\mu ,  \label{Equ:Thm:iso-ineq-Rn-2}
\end{equation}%
where 
\begin{equation*}
I(v)=v\left( \log \frac{1}{v}\right) ^{1+\frac{1}{^{\alpha }}},
\end{equation*}%
$C_{J},C_{L}$ are the constants from Theorems \emph{\ref%
{Thm:iso-func-ineq:radial}} and \emph{\ref{Thm:func-ineq-iso-sphere}}
respectively, and $c$ is the constant from \emph{(\ref{JLI})}.
\end{theorem}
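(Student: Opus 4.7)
The plan is to combine the two functional isoperimetric inequalities proved in Theorems~\ref{Thm:iso-func-ineq:radial} and \ref{Thm:func-ineq-iso-sphere} by passing to polar coordinates on $M=(0,\infty)\times\mathbb{S}^{n-1}$, where $d\mu(x)=d\nu(r)\,d\sigma_{n-1}(\theta)$ for $x=r\theta$. The bridge between the radial and spherical pictures will be the spherical mean
\[
F(r):=\frac{1}{\omega_n}\int_{\mathbb{S}^{n-1}}f(r\theta)\,d\sigma_{n-1}(\theta),
\]
for which Fubini gives $\int_M f\,d\mu=\omega_n\int_0^\infty F\,d\nu$, so that the left-hand side of \eqref{Equ:Thm:iso-ineq-Rn-2} becomes $\omega_nC_JC_L\,I\!\left(\int_0^\infty F\,d\nu\right)$. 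A preliminary observation is that the two-sided hypothesis \eqref{Equ:Thm:iso-ineq-Rn-1} is tailored so that both $F(r)\leq v_0$ pointwise and $\int_0^\infty F\,d\nu\leq v_0$ (the second using $\int_M f\,d\mu\leq\|f\|_\infty\mu(\bar B_R)=\|f\|_\infty\omega_n\nu((0,R))\leq\omega_n v_0$). Hence every argument of $J$ or $L$ that arises in the proof lies in the regime where $J=L=cI$ holds identically by \eqref{JLI}; this is exactly what will allow the two heterogeneous inequalities to be glued together through the common function $I$.

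The next step is to invoke Theorem~\ref{Thm:iso-func-ineq:radial} for the function $F$ on $\mathbb{R}_+$, obtaining
\[
cC_J\,I\!\left(\int_0^\infty F\,d\nu\right)\leq c\int_0^\infty I(F)\,d\nu+\int_0^\infty|F'|\,d\nu,
\]
and simultaneously to apply Theorem~\ref{Thm:func-ineq-iso-sphere} to the function $\theta\mapsto f(r\theta)$ (whose range is in $[0,1]$ since $v_0\leq 1$) for each fixed $r>0$, which yields
\[
\omega_ncC_L\,I(F(r))\leq c\int_{\mathbb{S}^{n-1}}I(f(r\theta))\,d\sigma_{n-1}+\int_{\mathbb{S}^{n-1}}|\nabla_{\mathbb{S}^{n-1}}f(r\theta)|\,d\sigma_{n-1}.
\]
Integrating this spherical estimate against $d\nu(r)$, using Fubini to recognize $\int_M I(f)\,d\mu$ on the right, and invoking the standard pointwise bound $|\nabla_{\mathbb{S}^{n-1}}f(r\theta)|\leq r|\nabla f(r\theta)|\leq R|\nabla f(r\theta)|$ valid on $\func{supp}f\subset\bar B_R$ produces the auxiliary inequality
\[
\omega_ncC_L\int_0^\infty I(F)\,d\nu\leq c\int_M I(f)\,d\mu+R\int_M|\nabla f|\,d\mu.
\]

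The final step is to multiply the radial inequality by $\omega_nC_L$, substitute this auxiliary bound on the $\int I(F)\,d\nu$ term, and estimate the remaining piece $\omega_nC_L\int_0^\infty|F'|\,d\nu$ by differentiating under the integral sign, $F'(r)=\omega_n^{-1}\int_{\mathbb{S}^{n-1}}\partial_rf(r\theta)\,d\sigma_{n-1}$, and using $|\partial_rf|\leq|\nabla f|$ to get $\omega_n\int_0^\infty|F'|\,d\nu\leq\int_M|\nabla f|\,d\mu$. Dividing the resulting inequality by $c$ will then produce \eqref{Equ:Thm:iso-ineq-Rn-2}. The main obstacle I anticipate is in this last assembly step: one must keep track of the constants very carefully, and in particular verify that both $F(r)$ and $\int F\,d\nu$ remain below $v_0$, because without this double control $J$ and $L$ would fall into different branches of their piecewise definitions and the common multiplicative constant $c$ could not be factored out to link the two inequalities. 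Once that is done, the rest is the polar decomposition $|\nabla f|^2=|\partial_rf|^2+r^{-2}|\nabla_{\mathbb{S}^{n-1}}f|^2$ separating the radial and angular contributions to the total-variation term, and routine Fubini.
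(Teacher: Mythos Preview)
Your argument is correct, but it is the mirror image of the paper's: you average over the sphere first to obtain a radial function $F(r)=\omega_n^{-1}\int_{\mathbb{S}^{n-1}}f(r\theta)\,d\sigma_{n-1}$, apply the radial inequality (Theorem~\ref{Thm:iso-func-ineq:radial}) to $F$, and use the spherical inequality (Theorem~\ref{Thm:func-ineq-iso-sphere}) on each slice $\theta\mapsto f(r\theta)$. The paper does the opposite, defining $F(\theta)=\int_{\mathbb{R}_+}f(r,\theta)\,d\nu(r)$ on $\mathbb{S}^{n-1}$, applying the spherical inequality to this $F$, and the radial inequality to each ray $r\mapsto f(r,\theta)$. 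Both orderings are valid, and your check that $F$ and $\int F\,d\nu$ stay in $[0,v_0]$ (so that $J=L=cI$ throughout, via \eqref{JLI}) is precisely the point that makes either gluing work.

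The only consequence of swapping the order is the constant on the gradient term. If you follow your assembly to the end, the coefficient you obtain is $\frac{1}{c}(C_L+R)$: the factor $C_L$ lands on the $\int|F'|\,d\nu$ contribution (bounded by $|\partial_r f|\le|\nabla f|$), and the $R$ comes from $|\nabla_\theta f|\le r|\nabla f|$ in the fiberwise spherical estimate. In the paper's ordering the roles are reversed, yielding $|f_r|+C_J|\nabla_\theta f|\le(1+C_JR)|\nabla f|$ and hence the stated coefficient $\frac{1}{c}(1+C_JR)$. So your proof establishes an inequality of the same form but not \eqref{Equ:Thm:iso-ineq-Rn-2} with the exact constant written there; for the application in Lemma~\ref{Thm:iso-ineq-small-volume-ball} (where $R$ is taken small) this makes no difference.
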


\begin{proof}
Let us use polar coordinates $\left( r,\theta \right) $ in $M=\mathbb{R}%
^{n}\setminus \left\{ 0\right\} $, where $r>0$ is the polar radius and $%
\theta \in \mathbb{S}^{n-1}$ is the polar angle (that is, for any $x\in M$
we have $r=\left\vert x\right\vert $ and $\theta =\frac{x}{\left\vert
x\right\vert }$). Let $f$ be a $C^{1}$ function on $M$ with support in $\bar{%
B}_{R}$ that satisfies (\ref{Equ:Thm:iso-ineq-Rn-1}). Consider the following
function $F$ on $\mathbb{S}^{n-1}$:%
\begin{equation*}
F\left( \theta \right) =\int_{\mathbb{R}_{+}}f\left( r,\theta \right) d\nu
\left( r\right) .
\end{equation*}%
By (\ref{Equ:Thm:iso-ineq-Rn-1}) we have%
\begin{equation}
0\leq F\leq v_{0}  \label{Fv0}
\end{equation}%
and, consequently, 
\begin{equation}
0\leq \frac{1}{\omega _{n}}\int_{\mathbb{S}^{n-1}}F(\theta )\,d\sigma
_{n-1}(\theta )\leq v_{0}.  \label{Equ:func-ineq-iso-sphere:e2}
\end{equation}%
Applying the estimate (\ref{Equ:func-ineq-iso-sphere:Thm}) of Theorem \ref%
{Thm:func-ineq-iso-sphere} to $F$ and noting that the function $L$ on the
range of $F$ can be replaced by $J$ or $cI$ (cf. (\ref{JLI})),  we obtain 
\begin{equation}
\omega _{n}C_{L}cI\left( \frac{1}{\omega _{n}}\int_{\mathbb{S}%
^{n-1}}F\,d\sigma _{n-1}\right) \leq \int_{\mathbb{S}^{n-1}}J(F)\,d\sigma
_{n-1}+\int_{\mathbb{S}^{n-1}}|\nabla _{\theta }F|\,d\sigma _{n-1}.
\label{Equ:func-ineq-iso-sphere:2F}
\end{equation}%
For the term in the left hand side we have 
\begin{equation}
\int_{\mathbb{S}^{n-1}}Fd\sigma _{n-1}=\int_{\mathbb{S}^{n-1}}\int_{\mathbb{R%
}_{+}}f\,d\nu d\sigma _{n-1}=\int_{M}f\,d\mu .
\label{Equ:func-ineq-iso-sphere:e1}
\end{equation}%
For the right hand side of (\ref{Equ:func-ineq-iso-sphere:2F}), we apply
Theorem \ref{Thm:iso-func-ineq:radial} to the function $f\left( \theta
,\cdot \right) $ and obtain 
\begin{equation}
\begin{split}
C_{J}J(F(\theta ))& =C_{J}J\left( \int_{\mathbb{R}_{+}}f(r,\theta )\,d\nu
(r)\right) \\
& \leq \int_{\mathbb{R}_{+}}J(f)\,d\nu +\int_{\mathbb{R}_{+}}|f_{r}|\,d\nu \\
& =\int_{\mathbb{R}_{+}}cI(f)\,d\nu +\int_{\mathbb{R}_{+}}|f_{r}|\,d\nu ,
\end{split}
\label{Equ:func-ineq-iso-sphere:e4}
\end{equation}%
where we have used that $J\left( f\right) =cI\left( f\right) $, which in
turn is true by (\ref{JLI}), because $0\leq f\leq v_{0}$. Combining (\ref%
{Equ:func-ineq-iso-sphere:2F}), (\ref{Equ:func-ineq-iso-sphere:e4}), and
using that%
\begin{equation*}
\left\vert \nabla _{\theta }F\right\vert \leq \int_{\mathbb{R}%
_{+}}\left\vert \nabla_{\theta} f \right\vert d\nu ,
\end{equation*}%
we obtain 
\begin{equation}
\begin{split}
& \omega _{n}C_{L}C_{J}cI\left( \frac{1}{\omega _{n}}\int_{M}f\,d\mu \right)
\\
\leq & \int_{\mathbb{S}^{n-1}}\int_{\mathbb{R}_{+}}cI(f)\,d\nu \,d\sigma
_{n-1}+\int_{\mathbb{S}^{n-1}}\int_{\mathbb{R}_{+}}|f_{r}|\,d\nu \,d\sigma
_{n-1}+C_{J}\int_{\mathbb{S}^{n-1}}\int_{\mathbb{R}_{+}}|\nabla _{\theta
}f|d\nu \,d\sigma _{n-1}
\end{split}
\label{Equ:func-ineq-iso-sphere:e5}
\end{equation}%
Note that%
\begin{equation*}
\left\vert \nabla f\right\vert ^{2}=f_{r}^{2}+\frac{1}{r^{2}}\left\vert
\nabla _{\theta }f\right\vert ^{2},
\end{equation*}%
whence%
\begin{equation*}
\left\vert f_{r}\right\vert +C_{J}\left\vert \nabla _{\theta }f\right\vert
\leq \left\vert \nabla f\right\vert +C_{J}r\left\vert \nabla f\right\vert .
\end{equation*}%
Since $f$ is supported in $\bar{B}_{R}$, the value of the polar radius $r$
in the integrals of (\ref{Equ:func-ineq-iso-sphere:e5}) is bounded by $R$.
Hence,%
\begin{equation*}
\left\vert f_{r}\right\vert +C_{J}\left\vert \nabla _{\theta }f\right\vert
\leq \left( 1+C_{J}R\right) \left\vert \nabla f\right\vert,
\end{equation*}%
whence we obtain%
\begin{equation*}
\omega _{n}C_{L}C_{J}cI\left( \frac{1}{\omega _{n}}\int_{M}f\,d\mu \right)
\leq c\int_{M}I\left( f\right) d\mu +\left( 1+C_{J}R\right)
\int_{M}\left\vert \nabla f\right\vert d\mu.
\end{equation*}%
Dividing both sides by $c$, we obtain (\ref{Equ:Thm:iso-ineq-Rn-2}).
\end{proof}

\bigskip

Now we shall apply the functional isoperimetric inequality (\ref%
{Equ:Thm:iso-ineq-Rn-2}) in order to prove an isoperimetric inequality for
the measure $\mu .$ In the next statement we first obtain a restricted
version of the isoperimetric inequality. We use the same notation as above.

\begin{lemma}
\label{Thm:iso-ineq-small-volume-ball} There are constants $R>0$ and $C>0$
such that, for any Borel set $A\subset B_{R}$, 
\begin{equation}
\mu ^{+}(A)\geq CI(\mu (A)).  \label{Equ:iso-ineq-small-volume-ball}
\end{equation}
\end{lemma}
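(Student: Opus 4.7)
The plan is to derive the restricted isoperimetric inequality \eqref{Equ:iso-ineq-small-volume-ball} by testing the functional isoperimetric inequality \eqref{Equ:Thm:iso-ineq-Rn-2} on a mollified indicator of $A$ scaled by the constant $v_{0}$, and then absorbing the resulting lower-order term using the smallness of $\mu(A)$.

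First, I would fix $R>0$ small enough that $\nu((0,R))\le 1$; then the admissibility bound \eqref{Equ:Thm:iso-ineq-Rn-1} collapses to the single pointwise constraint $0\le f\le v_{0}$. Given $A\subset B_{R}$, we may assume $\mu^{+}(A)<\infty$, whence $\mu(\partial A)=0$. For each small $\epsilon>0$ I would take the Lipschitz tent $\tilde f_{\epsilon}(x)=(1-\operatorname{dist}(x,A)/\epsilon)_{+}$ and mollify it into a $C^{1}$ function $f_{\epsilon}$ supported in a slight enlargement of $\bar B_{R+\epsilon}$, still bounded by $1$, converging to $\mathbf{1}_{A}$ $\mu$-a.e., and satisfying $\int_{M}|\nabla f_{\epsilon}|\,d\mu\le(\mu(A^{\epsilon})-\mu(A))/\epsilon+o(1)$. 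Since $v_{0}f_{\epsilon}\le v_{0}$, the function $v_{0}f_{\epsilon}$ is admissible in \eqref{Equ:Thm:iso-ineq-Rn-2} with radius $R+\epsilon+o(1)$.

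Letting $\epsilon\to 0$, I would invoke the continuity of $I$ on the left, dominated convergence on the right (with dominator $I(v_{0})\mathbf{1}_{A^{\epsilon_{0}}}$, valid since $I$ is increasing on $(0,v_{0}]$ because $v_{0}=\mathrm{e}^{-n(1+1/\alpha)}\le\mathrm{e}^{-(1+1/\alpha)}$), and the definition of $\mu^{+}(A)$, to obtain
\begin{equation}\label{prelim-iso}
\omega_{n}C_{J}C_{L}\,I\!\left(\frac{v_{0}\mu(A)}{\omega_{n}}\right)\le I(v_{0})\mu(A)+\frac{v_{0}(1+C_{J}R)}{c}\mu^{+}(A).
\end{equation}

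To absorb $I(v_{0})\mu(A)$, I would exploit the asymptotics as $v=\mu(A)\to 0$:
$$
\frac{I(v_{0}v/\omega_{n})}{I(v)}\longrightarrow \frac{v_{0}}{\omega_{n}},\qquad \frac{v}{I(v)}=\Big(\log\tfrac{1}{v}\Big)^{-(1+1/\alpha)}\longrightarrow 0.
$$
By shrinking $R$ further so that $\mu(A)\le\mu(B_{R})=\omega_{n}\nu((0,R))$ forces both $I(v_{0}\mu(A)/\omega_{n})\ge (v_{0}/(2\omega_{n}))\,I(\mu(A))$ and $I(v_{0})\mu(A)\le \tfrac12\omega_{n}C_{J}C_{L}\cdot(v_{0}/(2\omega_{n}))\,I(\mu(A))$, \eqref{prelim-iso} rearranges to the desired bound $\mu^{+}(A)\ge CI(\mu(A))$ with an explicit $C>0$ depending only on $\alpha,n$ and $R$.

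The main obstacle is quantitative rather than conceptual: making both asymptotic estimates above simultaneously effective via a single smallness requirement on $R$, and checking that the mollification of $\tilde f_{\epsilon}$ preserves, in the limit, precisely the perimeter $\mu^{+}(A)$ (for which the finiteness of $\mu^{+}(A)$ and hence $\mu(\partial A)=0$ is essential).
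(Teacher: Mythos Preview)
Your proposal is correct and follows essentially the same route as the paper: apply the functional inequality \eqref{Equ:Thm:iso-ineq-Rn-2} to a smooth approximation of a constant multiple of $\mathbf{1}_{A}$, pass to the limit, and absorb the resulting term $I(\text{const})\mu(A)$ into the left-hand side by choosing $R$ small. The only cosmetic differences are that the paper scales by $v_{0}/2$ rather than $v_{0}$ and carries out the absorption step by an explicit computation (reducing it to a single bound $\mu(A)\le\omega_{n}(v_{0}/2)^{N}$) rather than via the two asymptotic comparisons you outline; your version is slightly more detailed on the approximation and on the final comparison $I(c\mu(A))\gtrsim I(\mu(A))$, which the paper leaves implicit.
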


\begin{proof}
For any $\varepsilon >0$, let $f_{\varepsilon }$ be a smooth approximation
of $\frac{v_{0}}{2}\Eins_{A}$ such that

\begin{enumerate}
\item[$\left( a\right) $] $f_{\varepsilon }=0$ outside $A^{\varepsilon }$;

\item[$\left( b\right) $] $0\leq f_{\varepsilon }\leq v_{0}.$
\end{enumerate}

We can assume that the value of $R$ that we seek is small enough, so that%
\begin{equation}
\nu \left( \left( 0,R\right) \right) \leq v_{0}.  \label{R0}
\end{equation}%
Then $f_{\varepsilon }$ satisfies (\ref{Equ:Thm:iso-ineq-Rn-1}), and we
obtain by (\ref{Equ:Thm:iso-ineq-Rn-2})%
\begin{equation}
\omega _{n}C_{J}C_{L}I\left( \frac{1}{\omega _{n}}\int_{M}f_{\varepsilon
}\,d\mu \right) \leq \int_{M}I(f_{\varepsilon })\,d\mu +\frac{1}{c}\left(
C_{J}R+1\right) \int_{M}|\nabla f_{\varepsilon }|\,d\mu .
\label{Equ:Thm:iso-ineq-Rn-2-apply1}
\end{equation}%
Letting $\varepsilon \rightarrow 0$ in (\ref{Equ:Thm:iso-ineq-Rn-2-apply1})
we obtain 
\begin{equation}
\omega _{n}C_{J}C_{L}I\left( \frac{v_{0}}{2\omega _{n}}\mu (A)\right) \leq
I\left( \frac{v_{0}}{2}\right) \mu (A)+\frac{v_{0}}{2c}\left(
C_{J}R+1\right) \mu ^{+}(A).  \label{Equ:Thm:iso-ineq-Rn-2-apply2}
\end{equation}%
Let us show that if $R$ is small enough, then 
\begin{equation}
I\left( \frac{v_{0}}{2}\right) \mu (A)\leq \frac{1}{2}\omega
_{n}C_{J}C_{L}I\left( \frac{v_{0}}{2\omega _{n}}\mu (A)\right) .
\label{I1/2I}
\end{equation}%
Indeed, using $I\left( v\right) =v\left( \log \frac{1}{v}\right) ^{\beta }$
where $\beta =1+\frac{1}{\alpha }$, we obtain that (\ref{I1/2I}) is
equivalent to 
\begin{equation*}
\left( \log \frac{2}{v_{0}}\right) ^{\beta }\leq \frac{1}{2}C_{J}C_{L}\left(
\log \frac{2\omega _{n}}{v_{0}\mu \left( A\right) }\right) ^{\beta },
\end{equation*}%
which in turn is equivalent to%
\begin{equation*}
\mu \left( A\right) \leq \omega _{n}\left( \frac{v_{0}}{2}\right) ^{N},
\end{equation*}%
where $N=\left( \frac{1}{2}C_{J}C_{L}\right) ^{-1/\beta }-1.$ Since $\mu
\left( A\right) \leq \mu \left( B_{R}\right) $, this inequality will be 
satisfied provided%
\begin{equation}
\mu \left( B_{R}\right) \leq \omega _{n}\left( \frac{v_{0}}{2}\right) ^{N}.
\label{RN}
\end{equation}%
Hence, for the value of $R$ that satisfies both (\ref{R0}) and (\ref{RN}),
we obtain%
\begin{equation*}
\frac{1}{2}\omega _{n}C_{J}C_{L}I\left( \frac{v_{0}}{2\omega _{n}}\mu
(A)\right) \leq \frac{v_{0}}{2c}\left( C_{J}R+1\right) \mu ^{+}(A),
\end{equation*}%
whence (\ref{Equ:iso-ineq-small-volume-ball}) follows.
\end{proof}

\bigskip

Now we are ready to prove a full isoperimetric inequality for $\mu $. This
is the main technical result of this paper.

\begin{theorem}
\label{Thm:iso-ineq-global}There exist constants $C>0$ and $\tau \in \left(
0,1\right) $ such that the following function 
\begin{equation}
\tilde{I}(v)=C\left\{ 
\begin{array}{ll}
v\left( \log \frac{1}{v}\right) ^{1+\frac{1}{\alpha }},\quad & 0\leq v\leq
\tau , \\ 
v^{\frac{n-1}{n}},\quad & v>\tau%
\end{array}%
\right.  \label{Equ:iso-ineq-global}
\end{equation}%
is a lower isoperimetric function for the measure $\mu $ on $M$.
\end{theorem}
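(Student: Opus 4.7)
The plan is to extend the local estimate of Lemma~\ref{Thm:iso-ineq-small-volume-ball}, which already gives $\mu^+(A)\ge CI(\mu(A))$ for Borel sets $A\subset B_R$, to arbitrary Borel sets of $M$, by combining it with the classical Euclidean isoperimetric inequality on the region $M\setminus B_R$. The key observation is that on $M\setminus B_R$ the density $\mathrm{e}^{-|x|^{-\alpha}}$ is trapped between $\kappa:=\mathrm{e}^{-R^{-\alpha}}>0$ and $1$, so $\mu$ and Lebesgue measure are comparable there; moreover, for small $v$ one has $v^{(n-1)/n}\gg v(\log(1/v))^{1+1/\alpha}=I(v)$, so a Euclidean-type bound in the outer region actually beats $I$.

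Fix $R$ as in Lemma~\ref{Thm:iso-ineq-small-volume-ball} and $\tau>2\mu(B_R)$, also small enough for the asymptotics below. Given a smooth Borel set $A\subset M$, I would first choose a good separating radius. Let $\tilde\sigma_r$ denote the surface measure on $\partial B_r$ weighted by $\mathrm{e}^{-r^{-\alpha}}$. The co-area formula gives
\[
\int_{R/2}^{R}\tilde\sigma_r(A\cap\partial B_r)\,dr=\mu\bigl(A\cap(B_R\setminus B_{R/2})\bigr)\le\mu(A),
\]
so some $R_1\in[R/2,R]$ satisfies $\tilde\sigma_{R_1}(A\cap\partial B_{R_1})\le 2\mu(A)/R$. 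The standard additivity identity for weighted perimeters across a transverse smooth hypersurface then yields
\[
\mu^+(A)\ge\mu^+(A\cap B_{R_1})+\mu^+(A\setminus\bar B_{R_1})-\tfrac{4}{R}\mu(A).
\]

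Each piece is then estimated separately. Since $A\cap B_{R_1}\subset B_R$, Lemma~\ref{Thm:iso-ineq-small-volume-ball} gives $\mu^+(A\cap B_{R_1})\ge C_1 I(\mu(A\cap B_{R_1}))$. For the other piece, $A\setminus\bar B_{R_1}\subset\{|x|\ge R/2\}$, where the density of $\mu$ is bounded below by a positive constant, so the classical Euclidean isoperimetric inequality in $\mathbb{R}^n$ converts into $\mu^+(A\setminus\bar B_{R_1})\ge c_n'\,\mu(A\setminus B_{R_1})^{(n-1)/n}$ for some $c_n'>0$.

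The conclusion follows by cases. If $\mu(A)\le\tau$ and $\mu(A\cap B_{R_1})\ge\mu(A)/2$, the first estimate together with the doubling $I(v/2)\asymp I(v)$ near $0$ yields $\mu^+(A)\ge C_1 I(\mu(A)/2)-\tfrac{4}{R}\mu(A)\ge C'I(\mu(A))$, since the linear correction is dominated by $I(\mu(A))$ as $\mu(A)\to 0$. If instead $\mu(A\setminus B_{R_1})\ge\mu(A)/2$, the Euclidean bound gives $\mu^+(A)\ge c''\mu(A)^{(n-1)/n}-\tfrac{4}{R}\mu(A)\ge c'''\,I(\mu(A))$ because $v^{(n-1)/n}$ dominates $I(v)$ near $0$. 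For $\mu(A)>\tau$, the choice $\tau>2\mu(B_R)$ forces $\mu(A\setminus B_R)\ge\mu(A)/2$, and the Euclidean branch (with an analogous Fubini choice of radius in $[R,2R]$) directly gives $\mu^+(A)\ge C\mu(A)^{(n-1)/n}$. The main obstacle is the technical bookkeeping around the separating sphere: the perimeter splitting identity has to be justified for general BV sets via smooth approximation, and the pigeonhole choice of $R_1$ is essential, since any fixed separating radius would not let one absorb the interface term $\tilde\sigma_{R_1}(A\cap\partial B_{R_1})$ into $\mu(A)/R$.
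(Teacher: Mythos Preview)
Your argument is correct and follows a genuinely different route from the paper. The paper cuts at the \emph{fixed} radius $R$ and handles the interface term via a projection: it shows that the radial map $\Pi\colon x\mapsto Rx/|x|$ sends $\Gamma_1:=\partial\Omega\cap B_R^c$ onto a set covering $\Sigma:=\Omega\cap\partial B_R$, and that $\Pi$ decreases the weighted surface measure (since both the Hausdorff measure and the radially increasing density $\mathrm{e}^{-|x|^{-\alpha}}$ drop under $\Pi$). This yields $\sigma(\Gamma_1)\ge\sigma(\Sigma)$, and adding up the two local isoperimetric bounds gives the clean estimate $3\mu^+(\Omega)\ge C_0 I(\mu(\Omega_0))+C_1\mu(\Omega_1)^{(n-1)/n}$ with no correction term.

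Your coarea/pigeonhole choice of a \emph{variable} separating radius $R_1$ is the standard geometric-measure-theory alternative and is more robust: it does not use the radial monotonicity of the weight, only that $I(v)/v\to\infty$ so the linear correction $\tfrac{4}{R}\mu(A)$ is absorbed. The price is exactly what you identify --- the BV splitting across $\partial B_{R_1}$ and the smooth approximation need to be written out carefully --- and your large-volume case requires choosing $\tau$ a bit larger (say $\tau>2\mu(B_{2R})$, still $<1$ for small $R$) so that the constant interface bound $\tfrac{2}{R}\mu(B_{2R})$ is dominated by $c_n'(\tau/2)^{(n-1)/n}$; this holds since $\mu(B_{2R})^{1/n}\ll R$ for small $R$. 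The paper's projection trick is shorter and gives tidier constants, while your method would transfer unchanged to weights that are not radially monotone.
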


\begin{proof}
We shall use the function $I\left( v\right) =v\left( \log \frac{1}{v}\right)
^{1+\frac{1}{\alpha }}$ as before. By Theorem \ref%
{Thm:iso-ineq-small-volume-ball}, there exist some $R>0$ and a constant $%
C_{0}>0$ such that for all Borel sets $A\subset B_{R}$ 
\begin{equation}
\mu ^{+}(A)\geq C_{0}I\left( \mu \left( A\right) \right) .
\label{Equ:iso-ineq-global-1}
\end{equation}%
Since for all $|x|>R$ we have $\mathrm{e}^{-\frac{1}{|x|^{\alpha }}}\approx
1 $, the measure $\nu $ outside $B_{R}$ is in finite ratio with Lebesgue
measure, which implies that for all Borel sets $A\subset
B_{R}^{c}:=M\setminus B_{R}$, 
\begin{equation}
\mu ^{+}(A)\geq C_{1}\left( \mu (A)\right) ^{\frac{n-1}{n}},
\label{Equ:iso-ineq-global-2}
\end{equation}%
for some constant $C_{1}>0$.

For any Borel set $\Omega \subset M$, setting 
\begin{equation*}
\Omega _{0}=B_{R}\cap \Omega ,\quad \Omega _{1}=B_{R}^{c}\cap \Omega,
\end{equation*}%
let us prove that 
\begin{equation}
3\mu ^{+}(\Omega )\geq C_{0}I\left( \mu (\Omega _{0})\right) +C_{1}\mu
(\Omega _{1})^{\frac{n-1}{n}}.  \label{Equ:iso-ineq-global-5}
\end{equation}%
Set 
\begin{equation*}
\Gamma _{0}=\partial \Omega \cap B_{R},\quad \Gamma _{1}=\partial \Omega
\cap B_{R}^{c},\quad \Sigma =\Omega \cap \partial B_{R}
\end{equation*}%
and let $\sigma $ denote the $(n-1)$-dimensional measure induced by $\mu $,
that is, $\sigma $ has density $\mathrm{e}^{-\frac{1}{\left\vert
x\right\vert ^{\alpha }}}$ with respect to the $\left( n-1\right) $%
-Hausdorff measure $\mathcal{H}_{n-1}$. First observe that 
\begin{equation}
\sigma (\Gamma _{1})\geq \sigma (\Sigma ).  \label{Equ:iso-ineq-global-10}
\end{equation}%
Indeed, consider the projection $\Pi :x\mapsto \frac{Rx}{\left\vert
x\right\vert }$ of $\Gamma _{1}$ onto $\partial B_{R}.$ Clearly, the image $%
\Pi \left( \Gamma _{1}\right) $ covers $\Sigma $. Since $\Gamma _{1}$ lies
outside $B_{R}$, the mapping $\Pi $ reduces the measure $\mathcal{H}_{n-1}$,
and since the weight function $\mathrm{e}^{-\frac{1}{\left\vert x\right\vert
^{\alpha }}}$ is increasing in $\left\vert x\right\vert $, the same
reduction holds a fortiori for the measure $\sigma $, which proves (\ref%
{Equ:iso-ineq-global-10}).

\begin{figure}[tph]
\begin{center}
\includegraphics[width=0.8\textwidth]{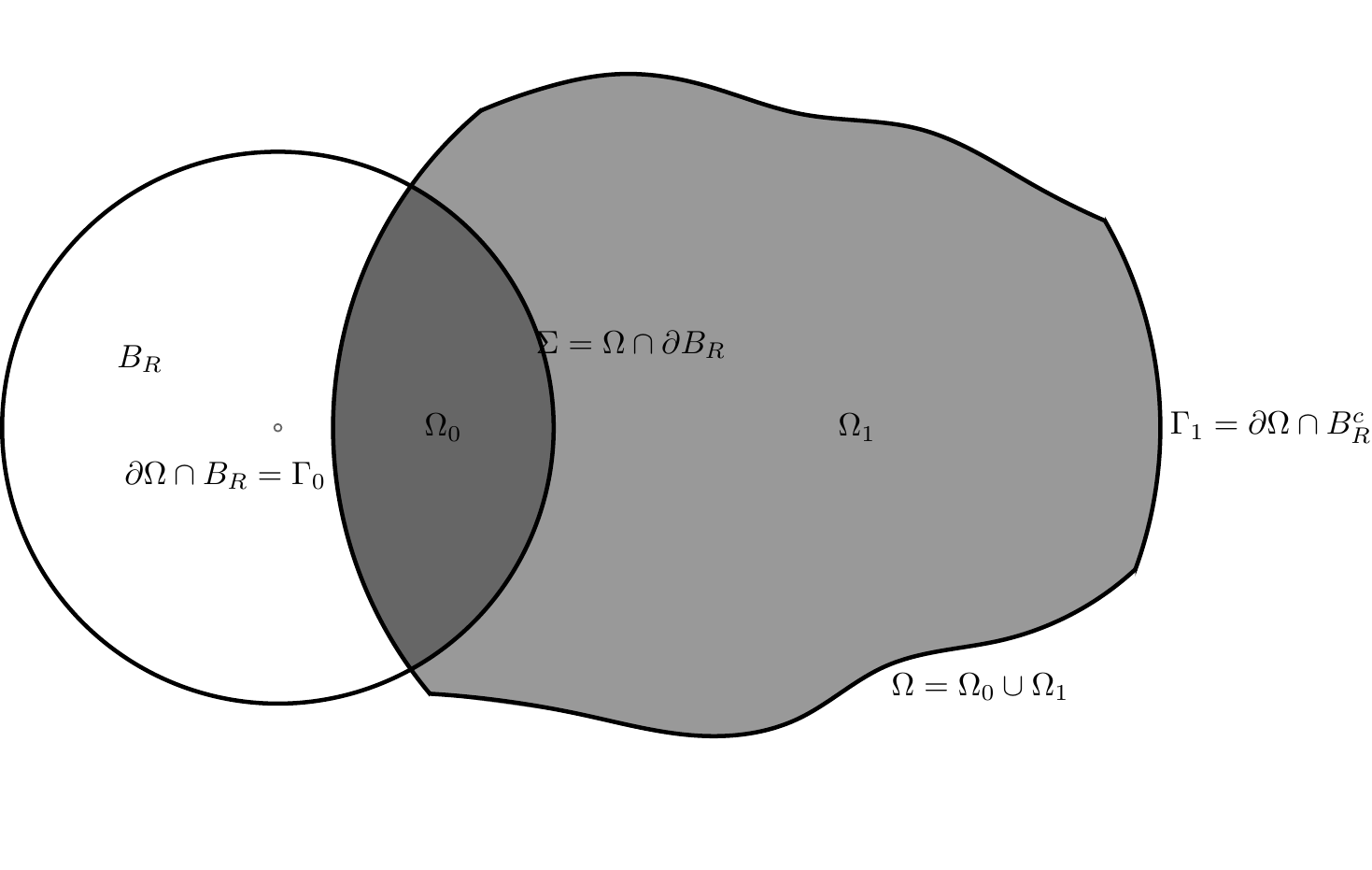}
\end{center}
\caption{Decompostion of $\Omega$ and $\partial \Omega$}
\end{figure}

By (\ref{Equ:iso-ineq-global-1}) we have 
\begin{equation}
\sigma (\Gamma _{0})+\sigma (\Sigma )=\mu ^{+}(\Omega _{0})\geq C_{0}I\left(
\mu (\Omega _{0})\right) .  \label{Equ:iso-ineq-global-12}
\end{equation}%
By (\ref{Equ:iso-ineq-global-2}) we have 
\begin{equation}
\sigma (\Gamma _{1})+\sigma (\Sigma )=\mu ^{+}(\Omega _{1})\geq C_{1}(\mu
(\Omega _{1}))^{\frac{n-1}{n}}.  \label{Equ:iso-ineq-global-11}
\end{equation}%
%
%
Adding up (\ref{Equ:iso-ineq-global-12}) and (\ref{Equ:iso-ineq-global-11})
and replacing $\sigma \left( \Sigma \right) $ by $\sigma \left( \Gamma
_{1}\right) $ according to (\ref{Equ:iso-ineq-global-10}), we obtain 
\begin{equation*}
\sigma \left( \Gamma _{0}\right) +3\sigma (\Gamma _{1})\geq C_{0}I\left( \mu
(\Omega _{0})\right) +C_{1}(\mu (\Omega _{1}))^{\frac{n-1}{n}},
\end{equation*}%
whence (\ref{Equ:iso-ineq-global-5}) follows, as $\mu ^{+}\left( \Omega
\right) =\sigma \left( \Gamma _{0}\right) +\sigma \left( \Gamma _{1}\right) .
$

Now from (\ref{Equ:iso-ineq-global-5}) we deduce the required isoperimetric
inequality, that is, 
\begin{equation}
\mu ^{+}\left( \Omega \right) \geq C\tilde{I}\left( \mu \left( \Omega
\right) \right) .  \label{Imu}
\end{equation}
Set $\tau =\mu (B_{R})$ and consider three cases.

$\left( a\right) $\ Assume that $0\leq \mu (\Omega )\leq \tau $. Clearly,
there is a constant $C_{2}>0$ such that 
\begin{equation}
v^{\frac{n-1}{n}}\geq C_{2}I\left( v\right) \ \text{for all }0\leq v\leq
\tau .  \label{Equ:iso-ineq-global-3}
\end{equation}%
From (\ref{Equ:iso-ineq-global-5}) and (\ref{Equ:iso-ineq-global-3}) we
obtain 
\begin{equation*}
\begin{split}
3\mu ^{+}(\Omega )& \geq C_{0}I\left( \mu (\Omega _{0})\right)
+C_{2}C_{1}I\left( \mu (\Omega _{1})\right) \\
& \geq CI\left( \mu \left( \Omega _{0}\right) \vee \mu \left( \Omega
_{1}\right) \right) \\
& \geq CI\left( \frac{1}{2}\mu \left( \Omega \right) \right) \\
& \geq \frac{1}{2}CI\left( \mu \left( \Omega \right) \right),
\end{split}%
\end{equation*}%
where $C=(C_{0}\wedge \left( C_{1}C_{2}\right) )$ and we have used that $%
I\left( \frac{1}{2}v\right) \geq \frac{1}{2}I\left( v\right) $. Renaming $%
\frac{1}{2}C$ by $C$, we obtain (\ref{Imu}).

$\left( b\right) $\ Assume that $\mu (\Omega )\geq 2\tau $. Since $\mu
\left( \Omega _{0}\right) \leq \tau $, we have in this case 
\begin{equation}
\mu (\Omega _{1})\geq \frac{1}{2}\mu (\Omega ).
\label{Equ:iso-ineq-global-7}
\end{equation}%
Therefore, we obtain from (\ref{Equ:iso-ineq-global-5}) 
\begin{equation}
\mu ^{+}(\Omega )\geq \frac{1}{3}C_{1}\mu (\Omega _{1})^{\frac{n-1}{n}}\geq 
\frac{1}{3}C_{1}\left( \frac{1}{2}\mu (\Omega )\right) ^{\frac{n-1}{n}}=C\mu
(\Omega )^{\frac{n-1}{n}},  \label{Equ:iso-ineq-global-8}
\end{equation}%
with $C=C_{1}\frac{1}{3}\left( \frac{1}{2}\right) ^{\frac{n-1}{n}},$ which
proves (\ref{Imu}) in this case.

$\left( c\right) $\ Assume that $\tau \leq \mu (\Omega )\leq 2\tau $. In
this case we have either $\mu \left( \Omega _{0}\right) \geq \frac{\tau }{2}$
or $\mu \left( \Omega _{1}\right) \geq \frac{\tau }{2}.$ In both cases, 
from (\ref{Equ:iso-ineq-global-5}) we obtain that%
\begin{equation*}
\mu ^{+}\left( \Omega \right) \geq C_{0}I\left( \frac{\tau }{2}\right)
\wedge C_{1}\left( \frac{\tau }{2}\right) ^{\frac{n-1}{n}}=C\left( 2\tau
\right) ^{\frac{n-1}{n}}\geq C\mu \left( \Omega \right) ^{\frac{n-1}{n}},
\end{equation*}%
where the constant $C$ is defined by the middle identity.

Hence, (\ref{Imu}) is satisfied in all cases, which was to be proved.
\end{proof}

\section{An upper bound of the heat kernel}

\label{secUpper} We use the following result from \cite{Gri94_RMI}.

\begin{theorem}
\label{estimates:ED:thm1} \emph{(\cite[Theorem 2.1]{Gri94_RMI}) } Let $%
\left( M,\mu \right) $ be a weighted manifold and assume that $\left( M,\mu
\right) $ satisfies the Faber-Krahn inequality \emph{(\ref%
{Lambda-FK:ineq-iso})} with a function $\Lambda $, where $\Lambda \colon
(0,+\infty )\rightarrow \left( 0,+\infty \right) $ is a decreasing function
such that 
\begin{equation}
\int_{0+}\frac{dv}{v\Lambda (v)}<\infty .
\label{Equ:integral-condition-Lambda}
\end{equation}%
Then the heat kernel $p_{t}\left( x,y\right) $ of $\left( M,\mu \right) $
satisfies the following upper bound%
\begin{equation}
\sup_{x,y\in M}p_{t}(x,y)\leq \frac{4}{V\left( \frac{1}{2}t\right) }
\label{Equ:estimate-E_D}
\end{equation}
for all $t>0$, where the function $V$ is defined by 
\begin{equation}
t=\int_{0}^{V(t)}\frac{dv}{v\Lambda (v)}.  \label{VW_Def:equ-iso}
\end{equation}
\end{theorem}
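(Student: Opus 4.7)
The plan is to establish the theorem via the classical chain: Faber--Krahn inequality $\Rightarrow$ differential inequality for the $L^{2}$-norm of the heat semigroup $\Rightarrow$ $L^{1}\to L^{2}$ ultracontractivity bound $\Rightarrow$ on-diagonal heat kernel bound via the semigroup identity. Fix a non-negative $f\in L^{1}\cap L^{2}$ with $\|f\|_{1}\leq 1$, set $u(t,\cdot)=P_{t}f$, and let $J(t):=\|u(t,\cdot)\|_{L^{2}(\mu)}^{2}$. Two facts are standing inputs: $\|u\|_{1}\leq 1$ for all $t>0$ (since $P_{t}$ is sub-Markovian on nonnegative data), and $J'(t)=-2\mathcal{E}(u,u)$ by the spectral theorem.

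The heart of the argument is to convert Faber--Krahn into a differential inequality for $J$. For $s>0$, consider the truncation $u_{s}:=(u-s)_{+}$, which is supported in $U_{s}:=\{u>s\}$. By Chebyshev's inequality $\mu(U_{s})\leq 1/s$, and since $|\nabla u_{s}|\leq |\nabla u|$ pointwise, the Faber--Krahn inequality \emph{(\ref{Lambda-FK:ineq-iso})} together with monotonicity of $\Lambda$ yields
\[
\mathcal{E}(u,u)\geq \mathcal{E}(u_{s},u_{s})\geq \Lambda(\mu(U_{s}))\|u_{s}\|_{2}^{2}\geq \Lambda(1/s)\|u_{s}\|_{2}^{2}.
\]
Splitting $u=u_{s}+u\wedge s$ and using $\|u\|_{1}\leq 1$ produces an estimate of the form $\|u_{s}\|_{2}^{2}\geq J(t)-Cs$. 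Choosing $s$ proportional to $J(t)$ (e.g.\ $s=J(t)/(2C)$) gives $\|u_{s}\|_{2}^{2}\geq \tfrac{1}{2}J(t)$ and the separable ODE
\[
J'(t)\leq -c_{1}\,\Lambda\!\left(\frac{c_{2}}{J(t)}\right)\,J(t).
\]

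The substitution $v=c_{2}/J$ transforms this into $\frac{dv}{v\Lambda(v)}\geq c_{3}\,dt$. Approximating a point mass as initial datum sends $J(0^{+})\to\infty$, hence $v(0^{+})=0$, and the condition \emph{(\ref{Equ:integral-condition-Lambda})} is exactly what makes integration from $v=0$ legitimate and $V$ well-defined via \emph{(\ref{VW_Def:equ-iso})}. Comparing integrals yields $v(t)\geq V(c_{3}t)$, i.e.\ $J(t)\leq c_{2}/V(c_{3}t)$. By density and duality this produces $\|P_{t}\|_{L^{1}\to L^{2}}^{2}\leq C/V(ct)$. Finally, the semigroup identity combined with Cauchy--Schwarz gives
\[
p_{t}(x,y)=\int p_{t/2}(x,z)\,p_{t/2}(z,y)\,d\mu(z)\leq \|p_{t/2}(x,\cdot)\|_{2}\,\|p_{t/2}(y,\cdot)\|_{2}\leq \|P_{t/2}\|_{L^{1}\to L^{2}}^{2},
\]
which is an estimate of the form \emph{(\ref{Equ:estimate-E_D})}.

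The main technical obstacle is to tune the truncation level $s$ and keep track of factors of $2$ so that the final constants line up with the exact form of \emph{(\ref{Equ:estimate-E_D})}; this is bookkeeping rather than genuine difficulty. A subtler point is to justify $\mathcal{E}(u,u)\geq \mathcal{E}(u_{s},u_{s})$ rigorously on the weighted manifold, which requires that $u(t,\cdot)$ lies in the domain of the Dirichlet form for $t>0$ and an approximation by compactly supported test functions (ultimately reducing to Markov-type truncation identities for regular Dirichlet forms). The passage from $f\in L^{1}\cap L^{2}$ to Dirac initial data must likewise be done by monotone approximation, relying on the smoothness of $p_{t}$ already recorded in the paper.
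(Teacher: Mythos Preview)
The paper does not give a proof of this theorem at all: it is quoted verbatim as \cite[Theorem 2.1]{Gri94_RMI} and used as a black box in Section~\ref{secUpper}. So there is nothing in the paper to compare your argument against.

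That said, your sketch is a faithful reconstruction of the argument in the cited reference. The chain Faber--Krahn $\Rightarrow$ differential inequality for $J(t)=\|P_tf\|_2^2$ via truncation at level $s$ $\Rightarrow$ separable ODE $\Rightarrow$ $L^1\to L^2$ bound $\Rightarrow$ on-diagonal bound via $p_t(x,x)=\|p_{t/2}(x,\cdot)\|_2^2$ is exactly Grigor'yan's method. With the sharp choice $s=J(t)/4$ one gets $\|u_s\|_2^2\ge J-2s=J/2$ and $\mu(\{u>s\})\le 4/J$, leading to $J'\le -\Lambda(4/J)J$; the substitution $v=4/J$ then produces $v'/(v\Lambda(v))\ge 1$, so integration from $v(0^+)=0$ gives $J(t)\le 4/V(t)$ and hence the stated constant $4$ and argument $t/2$ exactly.

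One technical point you did not flag: the Faber--Krahn hypothesis \eqref{Lambda-FK:ineq-iso} is stated for \emph{precompact} open sets, while the superlevel set $\{u>s\}$ need not be precompact. In the original reference this is handled by first working in a precompact exhaustion (or with compactly supported initial data) and passing to the limit; it is routine but should be mentioned alongside the other approximation steps you list.
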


Combining this theorem with the isoperimetric inequality (\ref%
{Equ:iso-ineq-global}), we obtain the following result.

\begin{theorem}
\label{Thm:upper-bound-hk} Set $M=\mathbb{R}^{n}\setminus \left\{ 0\right\} $
and consider the measure $d\mu =e^{-\frac{1}{\left\vert x\right\vert
^{\alpha }}}dx$ on $M$ for some $\alpha >0$. Then there are positive
constants $C,C_{0}$, depending only on $n$ and $\alpha $, such that the heat
kernel of $\left( M,\mu \right) $ satisfies the following inequality 
\begin{equation}
\sup_{x,y\in M} p_{t}(x,y)\leq C\left\{ 
\begin{array}{ll}
\exp \left( \frac{C_{0}}{t^{\frac{\alpha }{\alpha +2}}}\right) , & 0<t<1, \\ 
t^{-n/2}, & t>1.%
\end{array}%
\right.  \label{Equ:upper-bound-hk}
\end{equation}
\end{theorem}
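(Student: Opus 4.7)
The plan is to combine the isoperimetric inequality provided by Theorem \ref{Thm:iso-ineq-global} with the general heat-kernel bound of Theorem \ref{estimates:ED:thm1}. The first step is to convert the isoperimetric inequality $\mu^+(U)\geq \tilde{I}(\mu(U))$ into a Faber-Krahn inequality via Cheeger's inequality. Since the ratio $\tilde{I}(v)/v$ equals $C(\log 1/v)^{1+1/\alpha}$ for $v\leq \tau$ and $Cv^{-1/n}$ for $v>\tau$, it is (after a harmless smoothing at the junction $v=\tau$) a positive decreasing function of $v$. Cheeger's inequality therefore yields, for every precompact open $U\subset M$,
\begin{equation*}
\lambda_1(U)\geq \tfrac{1}{4}\bigl(\tilde{I}(\mu(U))/\mu(U)\bigr)^2=:\Lambda(\mu(U)),
\end{equation*}
so that $\Lambda$ is a decreasing Faber-Krahn function behaving like $(\log 1/v)^{2+2/\alpha}$ as $v\to 0$ and like $v^{-2/n}$ as $v\to\infty$, up to multiplicative constants depending only on $n$ and $\alpha$.

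Next I would verify the integrability condition $\int_{0+}dv/(v\Lambda(v))<\infty$ demanded by Theorem \ref{estimates:ED:thm1} and compute the asymptotics of $V(t)$ defined implicitly by $t=\int_0^{V(t)}dv/(v\Lambda(v))$. The substitution $u=\log(1/v)$ transforms the integrand near zero into $u^{-(2+2/\alpha)}$, which is integrable at infinity since $2+2/\alpha>1$, and gives $t\asymp (\log 1/V(t))^{-(\alpha+2)/\alpha}$ for small $t$. Inverting this relation,
\begin{equation*}
\log\frac{1}{V(t)}\asymp t^{-\alpha/(\alpha+2)},\qquad V(t)\asymp \exp\bigl(-C_1 t^{-\alpha/(\alpha+2)}\bigr).
\end{equation*}
For large $t$, using $\Lambda(v)\asymp v^{-2/n}$ one gets $t\asymp \int^{V(t)}v^{2/n-1}dv\asymp V(t)^{2/n}$, hence $V(t)\asymp t^{n/2}$.

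The final step is to invoke Theorem \ref{estimates:ED:thm1}, which gives $\sup_{x,y\in M}p_t(x,y)\leq 4/V(t/2)$; substituting the two asymptotics of $V$ produces exactly the bounds claimed in (\ref{Equ:upper-bound-hk}), with $C_0=2^{\alpha/(\alpha+2)}C_1$ for the small-time regime. I expect no serious obstacle, since each step follows a well-established route; the main technical point is the asymptotic inversion in the second step, where the characteristic exponent $\alpha/(\alpha+2)$ emerges from the interplay between the polynomial tail $u^{-(2+2/\alpha)}$ of the transformed integrand and the logarithmic relation $u=\log(1/V)$. All constants arising in the argument depend only on $n$ and $\alpha$, as required by the statement.
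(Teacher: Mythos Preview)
Your proposal is correct and follows essentially the same route as the paper: convert the isoperimetric inequality of Theorem \ref{Thm:iso-ineq-global} into a Faber--Krahn inequality via Cheeger's inequality, verify the integrability hypothesis of Theorem \ref{estimates:ED:thm1}, and compute the asymptotics of $V(t)$ in the two regimes. The paper adds one small remark you omit: the bound for intermediate values of $t$ follows from the monotonicity of $t\mapsto\sup_{x,y}p_t(x,y)$.
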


\begin{proof}
By Cheeger's inequality, the isoperimetric inequality (\ref%
{Equ:iso-ineq-global}) implies the Faber-Krahn inequality with the function 
\begin{equation}
\Lambda (v)=\frac{1}{4}\left( \frac{\tilde{I}(v)}{v}\right) ^{2}=C\left\{ 
\begin{array}{ll}
\displaystyle \left( \log \frac{1}{v}\right) ^{2+\frac{2}{\alpha }},\quad & 
0<v\leq \tau , \\ 
v^{-\frac{2}{n}},\quad & v>\tau%
\end{array}%
\right.  \label{Equ:Lambda-Faber-Krahn1}
\end{equation}%
(cf. \cite[Prop. 2.4]{Gri94_RMI}). Observe that the function in (\ref%
{Equ:Lambda-Faber-Krahn1}) satisfies condition (\ref%
{Equ:integral-condition-Lambda}) so that Theorem \ref{estimates:ED:thm1}
applies and yields the upper bound (\ref{Equ:estimate-E_D}) of $E_{D}\left(
t,x\right) .$ Let us estimate the function $V\left( t\right) $ that enters
the right hand side of (\ref{Equ:estimate-E_D}).

For small enough $t>0$ by (\ref{VW_Def:equ-iso}) we have 
\begin{equation*}
t=\int_{0}^{V(t)}\frac{dv}{v\Lambda (v)}=-\frac{1}{C}\int_{0}^{V(t)}\frac{%
d\log {1/v}}{\left( \log \frac{1}{v}\right) ^{2+\frac{2}{\alpha }}}=\frac{(1+%
\frac{2}{\alpha })}{C}\left( \log \frac{1}{V(t)}\right) ^{-\left( 1+\frac{2}{%
\alpha }\right) },
\end{equation*}%
whence%
\begin{equation*}
V(t)=\exp \left( -\frac{C_{0}}{t^{\frac{\alpha }{\alpha +2}}}\right),
\end{equation*}%
where $C_{0}=C_{0}\left( C,\alpha \right) >0.$ For a large enough $t$ we have%
\begin{equation*}
t=\int_{0}^{V(t)}\frac{dv}{v{\Lambda (v)}}\approx \int_{0}^{V(t)}\frac{dv}{%
v^{1-\frac{2}{n}}}\approx V(t)^{-\frac{2}{n}},
\end{equation*}%
whence 
\begin{equation*}
V(t)\approx t^{\frac{n}{2}}.
\end{equation*}%
Substituting these estimates of $V$ into (\ref{Equ:estimate-E_D}) we obtain (%
\ref{Equ:upper-bound-hk}) for small and large values of $t$. Then the
estimate for the intermediate values of $t$ follows from the fact that the
function $t\mapsto \sup_{x,y\in M}p_{t}(x,y)$ is decreasing.
\end{proof}

\section{A lower bound of the heat kernel}

\label{SecLow}In order to obtain a lower bound of the heat kernel, we use
the following notion. We say that a weighted manifold $\left( M,\mu \right) $
satisfies an anti-Faber-Krahn inequality if, for any $v>0$, there is an open
set $\Omega _{v}\subset M$ such that $\mu (\Omega _{v})=v$ and 
\begin{equation}
\lambda _{1}(\Omega _{v})\leq \Lambda (v).  \label{Equ:anti-Faber-Krahn}
\end{equation}
We shall use the following result from \cite{CG97}.

\begin{theorem}
\label{Thm:sup-lower-heat-kernel}\emph{(\cite[Theorem 3.2]{CG97})} Let $%
\Lambda $ be a function as in Theorem \emph{\ref{estimates:ED:thm1}}. Assume
that $\left( M,\mu \right) $ satisfies an anti-Faber-Krahn inequality with
the function $\Lambda $. Define a function $\gamma :\mathbb{R}%
_{+}\rightarrow \mathbb{R}_{+}$ by the identity 
\begin{equation}
t=\int_{0}^{\gamma (t)}\frac{dv}{v\Lambda (v)}  \label{Equ:LambdaV-gamma_t}
\end{equation}%
and assume that $\gamma (t)$ satisfies the following property: there exists
some constant $c_{\gamma }>0$ such that 
\begin{equation}
\frac{\gamma ^{\prime }(s)}{\gamma (s)}\geq C_{\gamma }\frac{\gamma ^{\prime
}(t)}{\gamma (t)},\quad \text{for all }0<t\leq s\leq 2t.
\label{Equ:D-property}
\end{equation}%
Then, for all $t>0$, 
\begin{equation}
\sup_{x\in M}p_{t}(x,x)\geq \frac{1}{\gamma \left( \frac{2}{c_{\gamma }}%
t\right) }.  \label{Equ:sup-lower-heat-kernel}
\end{equation}
\end{theorem}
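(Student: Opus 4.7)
The strategy is to combine a spectral lower bound on the Dirichlet heat kernel over the sets $\Omega_v$ provided by the anti-Faber-Krahn inequality with a scale-matching optimization of $v$ as a function of $t$.

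For any precompact open $\Omega \subset M$, let $0 < \lambda_1(\Omega) \leq \lambda_2(\Omega) \leq \ldots$ be the Dirichlet eigenvalues of $\mathcal{L}$ in $\Omega$ with $L^2(\Omega,\mu)$-orthonormal eigenfunctions $\{\varphi_k\}$. Expanding the Dirichlet heat kernel as $p_t^\Omega(x,y) = \sum_k e^{-\lambda_k t}\varphi_k(x)\varphi_k(y)$ and integrating the diagonal, Parseval's identity yields
\begin{equation*}
\int_\Omega p_t^\Omega(x,x)\,d\mu(x) = \sum_k e^{-\lambda_k(\Omega) t} \geq e^{-\lambda_1(\Omega) t}.
\end{equation*}
Since $p_t(x,x) \geq p_t^\Omega(x,x)$ on $\Omega$ by domain monotonicity of heat kernels, and the supremum dominates the average, one obtains
\begin{equation*}
\sup_{x \in M} p_t(x,x) \geq \frac{e^{-\lambda_1(\Omega)t}}{\mu(\Omega)}.
\end{equation*}
Applying this with $\Omega = \Omega_v$ from the anti-Faber-Krahn hypothesis gives, for every $v > 0$,
\begin{equation*}
\sup_{x \in M} p_t(x,x) \geq \frac{e^{-\Lambda(v) t}}{v}. \qquad (\ast)
\end{equation*}

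The second step is to optimize $v$ as a function of $t$ so that the right-hand side of $(\ast)$ becomes $1/\gamma$ of the correct argument. Differentiating the defining identity \eqref{Equ:LambdaV-gamma_t} of $\gamma$ yields
\begin{equation*}
\gamma'(\tau) = \gamma(\tau)\,\Lambda(\gamma(\tau)),
\end{equation*}
equivalently $(\log\gamma)'(\tau) = \Lambda(\gamma(\tau))$. The natural choice is $v = \gamma(\tfrac{2}{c_\gamma}t)$, because then the factor $1/v$ in $(\ast)$ coincides with $1/\gamma(\tfrac{2}{c_\gamma}t)$, and it remains to show that $\Lambda(v)\,t = t\,\gamma'(\tfrac{2}{c_\gamma}t)/\gamma(\tfrac{2}{c_\gamma}t)$ is bounded by a universal constant. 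This is the role of hypothesis (D): integrating $(\log\gamma)'$ over the dyadic interval $[\tfrac{t}{c_\gamma},\tfrac{2t}{c_\gamma}]$ and using (D) to compare the integrand with its value at the right endpoint, one controls $t\,\Lambda(\gamma(\tfrac{2}{c_\gamma}t))$ by a constant depending only on $c_\gamma$. Substituting back into $(\ast)$ gives the claimed bound, with the remaining multiplicative constant absorbed by the factor $\tfrac{2}{c_\gamma}$ in the argument of $\gamma$.

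The main obstacle is the calibration in this last step: (D) must be applied to an interval that is long enough for $(\log\gamma)'$ to accumulate a usable estimate, yet short enough that the exponential prefactor $e^{-\Lambda(v)t}$ costs at most a harmless multiplicative constant. This delicate balance is the sole place where (D) enters the proof, and it reflects the general principle that sharp on-diagonal heat kernel lower bounds require matching the observation time $t$ to the intrinsic scale on which the Faber-Krahn function $\Lambda$ varies.
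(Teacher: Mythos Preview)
The paper does not give its own proof of this statement; it is quoted verbatim from \cite[Theorem 3.2]{CG97}. So there is no in-paper argument to compare against, only the student's attempt to reconstruct the Coulhon--Grigor'yan proof.

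Your first step is correct and standard: the trace inequality $\int_\Omega p_t^\Omega(x,x)\,d\mu \ge e^{-\lambda_1(\Omega)t}$ together with domain monotonicity yields $(\ast)$. The gap is in the optimization. You set $v=\gamma(\tfrac{2}{c_\gamma}t)$, which makes the factor $1/v$ match the target, but then $e^{-\Lambda(v)t}<1$ and you are left with a multiplicative loss $e^{-C}$ that you claim can be ``absorbed by the factor $\tfrac{2}{c_\gamma}$ in the argument of $\gamma$''. That absorption is not justified: nothing in the hypotheses lets you trade a constant multiplying $1/\gamma$ for a constant inside $\gamma$. Moreover, your sketch of how (D) bounds $t\,\Lambda(\gamma(\tfrac{2}{c_\gamma}t))$ does not close: comparing $(\log\gamma)'$ to its value at the \emph{right} endpoint of $[\tfrac{t}{c_\gamma},\tfrac{2t}{c_\gamma}]$ is automatic by monotonicity of $\Lambda\circ\gamma$ and does not use (D), while the resulting bound $c_\gamma\bigl(\log\gamma(\tfrac{2t}{c_\gamma})-\log\gamma(\tfrac{t}{c_\gamma})\bigr)$ is not a universal constant.

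The fix is to choose $v=\gamma(\tfrac{t}{c_\gamma})$ instead. Then the inequality to be proved becomes
\[
\log\gamma\Bigl(\tfrac{2t}{c_\gamma}\Bigr)-\log\gamma\Bigl(\tfrac{t}{c_\gamma}\Bigr)
\;\ge\; t\,(\log\gamma)'\Bigl(\tfrac{t}{c_\gamma}\Bigr),
\]
and this follows \emph{exactly} from (D) applied on the dyadic interval $[\tfrac{t}{c_\gamma},\tfrac{2t}{c_\gamma}]$ with the base point at the \emph{left} endpoint: $(\log\gamma)'(s)\ge c_\gamma(\log\gamma)'(\tfrac{t}{c_\gamma})$ integrates to give the right-hand side with no leftover constant. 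With this correction your outline becomes a complete proof.
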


To apply Theorem \ref{Thm:sup-lower-heat-kernel} we need the following lemma.

\begin{lemma}
\label{lmm:lambda_1_Br-estimate}Consider the manifold $M=\mathbb{R}%
^{n}\setminus \left\{ 0\right\} $ with measure $d\mu =\mathrm{e}^{-\frac{1}{%
\left\vert x\right\vert ^{\alpha }}}dx$ where $\alpha >0$. For any $r>0$ set 
\begin{equation*}
B_{r}:=\{x\in \mathbb{R}^{n}\setminus \{0\}\colon \left\vert x\right\vert
<r\}.
\end{equation*}%
There exists some constant $C>0$ such that for all $0<r<1$, 
\begin{equation}
\lambda _{1}(B_{r})\leq Cr^{-2(1+\alpha )}
\label{Equ:lambda_1_Br-estimate-small-r}
\end{equation}%
(in fact $\lambda _{1}(B_{r})\approx r^{-2(1+\alpha )}$) and for all $r\geq 1
$ 
\begin{equation}
\lambda _{1}(B_{r})\leq Cr^{-2}  \label{Equ:lambda_1_Br-estimate-large-r}
\end{equation}%
(in fact $\lambda _{1}\left( B_{r}\right) \approx r^{-2}$).
\end{lemma}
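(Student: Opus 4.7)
The plan is to use the variational (Rayleigh quotient) characterization of the bottom of the Dirichlet spectrum: for any non-zero $f\in C_c^\infty(B_r)$,
\begin{equation*}
\lambda_1(B_r)\leq \frac{\int_{B_r}|\nabla f|^2\,d\mu}{\int_{B_r}f^2\,d\mu},
\end{equation*}
so it suffices to exhibit, for each $r$, a test function whose Rayleigh quotient is of the right order.

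For the case $r\geq 1$ the measure $\mu$ is comparable to Lebesgue measure on $\{|x|\geq 1\}$ (since $e^{-|x|^{-\alpha}}\in[e^{-1},1]$ there), so I would take $f$ radial, supported in the annulus $\{r/2<|x|<r\}$, equal to $1$ on $\{5r/8<|x|<7r/8\}$, with $|\nabla f|\leq C/r$. Then $\int f^2\,d\mu\asymp r^n$ and $\int|\nabla f|^2\,d\mu\leq Cr^{n-2}$, giving Rayleigh quotient $\leq C r^{-2}$ as required.

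The main content is the case $0<r<1$. The intuition is that $e^{-|x|^{-\alpha}}$ is sharply concentrated in a boundary layer near $\partial B_r$: a first-order Taylor expansion of $s\mapsto s^{-\alpha}$ at $s=r$ shows that if $w=\kappa\,r^{\alpha+1}$ for a small enough constant $\kappa=\kappa(\alpha)>0$, then on the shell $\{r-3w\leq |x|\leq r\}$ one has $|s^{-\alpha}-r^{-\alpha}|\lesssim 1$, uniformly in $r\in(0,1)$ (this uses precisely that $\kappa r^\alpha$ is small, which holds because $r<1$, provided $\kappa=\kappa(\alpha)$ is chosen small). Hence on this shell the weight $e^{-|x|^{-\alpha}}$ is comparable to $e^{-r^{-\alpha}}$. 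I would then take a radial cut-off $f$ equal to $1$ on $\{r-2w\leq|x|\leq r-w\}$, supported in $\{r-3w\leq|x|\leq r\}\subset B_r$, with $|\nabla f|\leq C/w$. The measure of the support is $\asymp w\cdot r^{n-1}\cdot e^{-r^{-\alpha}}$, the integral $\int f^2\,d\mu$ is bounded below by the same order, and $\int|\nabla f|^2\,d\mu\leq (C/w^2)\cdot w\cdot r^{n-1}\cdot e^{-r^{-\alpha}}$. The exponential factors cancel in the quotient, and the width $w$ of the shell cancels on all but the factor $1/w^2$, yielding
\begin{equation*}
\lambda_1(B_r)\leq \frac{C}{w^2}=C\kappa^{-2}\,r^{-2(1+\alpha)}.
\end{equation*}

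The one delicate point is pinning down the constant $\kappa$ and checking the comparability of the weight on the shell uniformly in $r\in(0,1)$; after that, everything is a routine Riemann-integration estimate over the shell (using that on a thin shell near radius $r$ the $(n-1)$-dimensional angular factor contributes $\asymp r^{n-1}$). The parenthetical matching lower bounds $\lambda_1(B_r)\asymp r^{-2(1+\alpha)}$ and $\lambda_1(B_r)\asymp r^{-2}$ are not needed for the statement of the lemma itself and would require a separate weighted Poincaré-type argument, so I would omit them from the proof.
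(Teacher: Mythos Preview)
Your proposal is correct. For $r\geq 1$ you and the paper do essentially the same thing: a radial test function on an annulus of scale $r$, using that the weight is comparable to $1$ there.

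For $0<r<1$ your route is genuinely different from the paper's. The paper does \emph{not} build a test function directly; instead it invokes a Hardy--Muckenhoupt type characterization (from \cite{Gri06_JW06,Gri99_OT}) asserting that
\[
\lambda_1(B_r)\approx \frac{1}{F(r)},\qquad F(r)=\sup_{0<\xi<r}\Bigl[V(\xi)\int_\xi^r \frac{dt}{S(t)}\Bigr],
\]
with $S(t)=\omega_n t^{n-1}e^{-t^{-\alpha}}$ and $V(t)=\mu(B_t)$. It then takes $\xi=r/2$ and estimates the integral by restricting to $[\xi,\xi+\xi^{1+\alpha}]$, on which $e^{t^{-\alpha}}\approx e^{\xi^{-\alpha}}$; this yields $F(r)\gtrsim r^{2(1+\alpha)}$ and hence the upper bound on $\lambda_1$. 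Your shell width $w=\kappa r^{1+\alpha}$ is exactly the same geometric scale the paper isolates inside that integral, so the underlying insight coincides; you simply package it as an explicit Rayleigh-quotient test function rather than through the $F(r)$ formula. Your argument is more elementary and self-contained (no external theorem needed). The paper's approach, on the other hand, gives the two-sided estimate $\lambda_1(B_r)\approx r^{-2(1+\alpha)}$ for free, which is why the parenthetical remarks are justified there; you are right that your method only yields the upper bound, which is all the lemma actually asserts and all that is used downstream.
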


\begin{proof}
Let us first prove (\ref{Equ:lambda_1_Br-estimate-large-r}). Fix $r\geq 1$
and consider a test function%
\begin{equation*}
\varphi \left( x\right) =\left\{ 
\begin{array}{ll}
\left( \left\vert x\right\vert -r/4\right) _{+}, & \left\vert x\right\vert
\leq r/2, \\ 
\left( r-\left\vert x\right\vert \right) _{+} & \left\vert x\right\vert \geq
3r/4, \\ 
\frac{1}{4}r, & r/2<\left\vert x\right\vert <3r/4,%
\end{array}%
\right.
\end{equation*}%
that is a Lipschitz function with compact support in $B_{r}$. By the
variational principle, we have%
\begin{equation*}
\lambda _{1}\left( B_{R}\right) \leq \frac{\int_{M}\left\vert \nabla \varphi
\right\vert ^{2}d\mu }{\int_{M}\varphi ^{2}d\mu }.
\end{equation*}%
Clearly, we have%
\begin{equation*}
\int_{M}\varphi ^{2}d\mu \geq \int_{B_{r/2}\setminus B_{r/4}}\varphi
^{2}d\mu =\left( \frac{1}{4}r\right) ^{2}\mu \left( B_{r/2}\setminus
B_{r/4}\right) \approx r^{2}r^{n}=r^{n+2},
\end{equation*}%
where we use the fact that outside $B_{r/4}$ the measure $\mu $ is finitely
proportional to the Lebesgue measure. Also, since $\left\vert \nabla \varphi
\right\vert \leq 1$, we have%
\begin{equation*}
\int_{M}\left\vert \nabla \varphi \right\vert ^{2}d\mu \leq \mu \left(
B_{r}\right) \leq Cr^{n}.
\end{equation*}%
Combining this with the previous line, we obtain (\ref%
{Equ:lambda_1_Br-estimate-large-r}).

Let us now prove (\ref{Equ:lambda_1_Br-estimate-small-r}). Set $S(r)=\mu
^{+}(B_{r})$ and $V(r)=\mu (B_{r})$. By \cite[Theorem 2.10]{Gri06_JW06} (see
also \cite{Gri99_OT}) we have 
\begin{equation}
\lambda _{1}(B_{r})\approx \frac{1}{F(r)}  \label{Equ:lambda_1-1/Fr}
\end{equation}%
for all $r>0$, where 
\begin{equation*}
F(r):=\sup_{0<\xi <r}\left[ V(\xi )\int_{\xi }^{r}\frac{dt}{S(t)}\right] .
\end{equation*}%
By definition of $\mu $ we have 
\begin{equation*}
S(r)=\omega _{n}r^{n-1}\mathrm{e}^{-\frac{1}{r^{\alpha }}}
\end{equation*}%
and 
\begin{equation*}
V(r)=\int_{0}^{r}S(t)\,dt\approx r^{n+\alpha }\mathrm{e}^{-\frac{1}{%
r^{\alpha }}}.
\end{equation*}%
Let us show that there exists some $c>0$ such that for $0<r<1$ 
\begin{equation*}
V\left( \frac{r}{2}\right) \int_{r/2}^{r}\frac{dt}{S\left( t\right) }\geq
cr^{2\left( 1+\alpha \right) },
\end{equation*}%
which would imply $F\left( r\right) \geq cr^{2\left( 1+\alpha \right) }$
and, hence, (\ref{Equ:lambda_1_Br-estimate-small-r}).

Set $\xi =r/2$ and observe that 
\begin{equation*}
\int_{\xi }^{r}\frac{1}{S(t)}\,dt=\frac{1}{\omega _{n}}\int_{\xi
}^{r}t^{1-n}\exp \left( \frac{1}{t^{\alpha }}\right) \,dt\approx
r^{1-n}\int_{\xi }^{r}\exp \left( \frac{1}{t^{\alpha }}\right) \,dt,
\end{equation*}%
whence%
\begin{equation}
V(\xi )\int_{\xi }^{r}\frac{1}{S(t)}\,dt\approx r^{1+\alpha }\exp \left( -%
\frac{1}{\xi ^{\alpha }}\right) \int_{\xi }^{r}\exp \left( \frac{1}{%
t^{\alpha }}\right) \,dt.  \label{Equ:V-int-estimate1}
\end{equation}
Next let us verify that%
\begin{equation}
\exp \left( \frac{1}{\left( \xi +\xi ^{1+\alpha }\right) ^{\alpha }}\right)
\geq C^{-1}\exp \left( \frac{1}{\xi ^{\alpha }}\right)
\label{Equ:exp-t-alpha}
\end{equation}%
for some $C>0$. Indeed, 
\begin{equation*}
\frac{\exp \left( \frac{1}{\xi ^{\alpha }}\right) }{\exp \left( \frac{1}{%
\left( \xi +\xi ^{1+\alpha }\right) ^{\alpha }}\right) }=\exp \left( \frac{1%
}{\xi ^{\alpha }}-\frac{1}{\xi ^{\alpha }\left( 1+\xi ^{\alpha }\right)
^{\alpha }}\right) =\exp \left( \frac{\left( 1+\xi ^{\alpha }\right)
^{\alpha }-1}{\xi ^{\alpha }\left( 1+\xi ^{\alpha }\right) ^{\alpha }}%
\right) .
\end{equation*}%
Since the function $x\mapsto \frac{\left( 1+x\right) ^{\alpha }-1}{x}$ is
bounded for $x\in \left( 0,1\right) $, say by a constant $C$, we obtain%
\begin{equation*}
\frac{\exp \left( \frac{1}{\xi ^{\alpha }}\right) }{\exp \left( \frac{1}{%
\left( \xi +\xi ^{1+\alpha }\right) ^{\alpha }}\right) }\leq \exp \left( 
\frac{C}{\left( 1+\xi ^{\alpha }\right) ^{\alpha }}\right) \leq \exp \left(
C\right) ,
\end{equation*}%
which proves (\ref{Equ:exp-t-alpha}). Since $r\geq \xi +\xi ^{1+\alpha }$,
it follows that 
\begin{equation*}
\int_{\xi }^{r}\exp \left( \frac{1}{t^{\alpha }}\right) \,dt\geq \int_{\xi
}^{\xi +\xi ^{1+\alpha }}\exp \left( \frac{1}{t^{\alpha }}\right) \,dt\geq
\xi ^{1+\alpha }\exp \left( \frac{1}{\left( \xi +\xi ^{1+\alpha }\right)
^{\alpha }}\right) \geq C^{-1}\xi ^{1+\alpha }\exp \left( \frac{1}{\xi
^{\alpha }}\right) .
\end{equation*}%
Substituting the estimate above into (\ref{Equ:V-int-estimate1}) we obtain
that, for some constant $C_{1}>0$, 
\begin{equation*}
V(\xi )\int_{\xi }^{r}\frac{1}{S(t)}\,dt\geq C_{1}r^{1+\alpha }\exp \left( -%
\frac{1}{\xi ^{\alpha }}\right) C^{-1}\xi ^{1+\alpha }\exp \left( \frac{1}{%
\xi ^{\alpha }}\right) \approx r^{2(1+\alpha )},
\end{equation*}%
which finishes the proof of (\ref{Equ:lambda_1_Br-estimate-small-r}).
\end{proof}

\bigskip

Finally we can prove a lower bound of the heat kernel.

\begin{theorem}
\label{Thm:sup-lower-heat-kernel-apply}For the manifold $M=\mathbb{R}%
^{n}\setminus \left\{ 0\right\} $ with measure $d\mu =\mathrm{e}^{-\frac{1}{%
\left\vert x\right\vert ^{\alpha }}}dx$, there exist constants $c,c_{0}>0$
depending on $n$ and $\alpha $, such that the heat kernel of $\left( M,\mu
\right) $ satisfies the following estimate 
\begin{equation}
\sup_{x\in M}p_{t}(x,x)\geq c\left\{ 
\begin{array}{ll}
\exp \left( \frac{c_{0}}{t^{\frac{\alpha }{2+\alpha }}}\right) , & 0<t<1, \\ 
t^{-n/2}, & t\geq 1.%
\end{array}%
\right.  \label{Equ:sup-lower-heat-kernel-apply}
\end{equation}
\end{theorem}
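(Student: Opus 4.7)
The plan is to apply Theorem \ref{Thm:sup-lower-heat-kernel}, with the anti-Faber-Krahn inequality witnessed by the Euclidean balls $B_r$ themselves. The function $\Lambda$ we expect to use is essentially the same one from (\ref{Equ:Lambda-Faber-Krahn1}) that was used on the upper-bound side, which is the reason the exponents match between Theorems \ref{Thm:upper-bound-hk} and \ref{Thm:sup-lower-heat-kernel-apply}.

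First, I would relate the parameter $r$ of $B_r$ to the measure $v=\mu(B_r)$. From the computations in the proof of Proposition \ref{Prop:J-def} we have $\mu(B_r)\approx \omega_n r^{n+\alpha}\mathrm{e}^{-1/r^\alpha}$ for small $r$, so $\log(1/v)\approx 1/r^\alpha$, i.e.\ $r\approx (\log 1/v)^{-1/\alpha}$; for large $r$ clearly $\mu(B_r)\approx r^n$. Combining these with Lemma \ref{lmm:lambda_1_Br-estimate}, i.e.\ $\lambda_1(B_r)\leq Cr^{-2(1+\alpha)}$ for small $r$ and $\lambda_1(B_r)\leq Cr^{-2}$ for large $r$, gives, after reparametrising by $v$, the anti-Faber-Krahn bound $\lambda_1(B_r)\leq \Lambda(\mu(B_r))$ with
\begin{equation*}
\Lambda(v)=C\begin{cases} \left(\log\frac{1}{v}\right)^{2+\frac{2}{\alpha}}, & 0<v\leq \tau,\\ v^{-2/n}, & v>\tau,\end{cases}
\end{equation*}
which is, up to constants, exactly the function from (\ref{Equ:Lambda-Faber-Krahn1}). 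So for every $v>0$ we can choose $\Omega_v=B_{r(v)}$ with $\mu(\Omega_v)=v$ and $\lambda_1(\Omega_v)\leq\Lambda(v)$.

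Next I would compute $\gamma(t)$ defined by (\ref{Equ:LambdaV-gamma_t}). The integral $\int_0^{\gamma(t)}dv/(v\Lambda(v))$ is exactly the one evaluated in the proof of Theorem \ref{Thm:upper-bound-hk}, so for small $t$ we obtain $\gamma(t)\asymp \exp(-c_0/t^{\alpha/(\alpha+2)})$, and for large $t$ we obtain $\gamma(t)\asymp t^{n/2}$. To apply Theorem \ref{Thm:sup-lower-heat-kernel} I still need the regularity condition (\ref{Equ:D-property}). Differentiating (\ref{Equ:LambdaV-gamma_t}) gives $\gamma'(t)/\gamma(t)=\Lambda(\gamma(t))$, so (\ref{Equ:D-property}) reduces to the doubling-type inequality $\Lambda(\gamma(2t))\geq C_\gamma\Lambda(\gamma(t))$. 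Plugging in the asymptotics: for small $t$ one gets $\Lambda(\gamma(t))\asymp t^{-(2\alpha+2)/(\alpha+2)}$, so the ratio is the fixed constant $2^{-(2\alpha+2)/(\alpha+2)}$; for large $t$ one gets $\Lambda(\gamma(t))\asymp t^{-1}$ and the ratio is $1/2$. Hence (\ref{Equ:D-property}) holds with some $C_\gamma>0$.

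Then Theorem \ref{Thm:sup-lower-heat-kernel} yields $\sup_{x\in M}p_t(x,x)\geq 1/\gamma(\tfrac{2}{C_\gamma}t)$, and inverting the asymptotics of $\gamma$ derived above produces (\ref{Equ:sup-lower-heat-kernel-apply}) for small and large $t$ separately. The short-time bound for intermediate $t$ then follows from monotonicity of $t\mapsto\sup_x p_t(x,x)$, exactly as at the end of the proof of Theorem \ref{Thm:upper-bound-hk}. I expect the only genuinely delicate point to be checking the regularity condition (\ref{Equ:D-property}) across the ``glueing'' value $v=\tau$ where $\Lambda$ changes form; this is handled by separating the small-$t$ and large-$t$ regimes and using that $\gamma$ is monotone, so that for any fixed $t$ both $\gamma(t)$ and $\gamma(2t)$ lie in the same piece of the definition of $\Lambda$ once $t$ is sufficiently small or sufficiently large.
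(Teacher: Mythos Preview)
Your proposal is correct and follows essentially the same route as the paper: choose $\Omega_v=B_{r(v)}$, invoke Lemma~\ref{lmm:lambda_1_Br-estimate} together with the volume asymptotics (\ref{Equ:v-approx-R}) to obtain the anti-Faber-Krahn function $\Lambda$ of~(\ref{Equ:Lambda-Faber-Krahn1}), compute $\gamma$ as in the proof of Theorem~\ref{Thm:upper-bound-hk}, and apply Theorem~\ref{Thm:sup-lower-heat-kernel}. Your verification of~(\ref{Equ:D-property}) via the identity $\gamma'(t)/\gamma(t)=\Lambda(\gamma(t))$ is a bit more explicit than the paper's, which simply asserts the condition is ``easy to verify''.
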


\begin{proof}
For any $v>0$, take $\Omega _{v}=B_{r}$ where $r$ is chosen so that $\mu
(B_{r})=v$. If $v$ is small enough then by (\ref{Equ:v-approx-R}) we have 
\begin{equation*}
r^{-1}\approx \left( \log \frac{1}{v}\right) ^{\frac{1}{\alpha }}.
\end{equation*}%
Hence by Lemma \ref{lmm:lambda_1_Br-estimate} we obtain 
\begin{equation*}
\lambda _{1}(\Omega _{v})\leq Cr^{-2\left( 1+\alpha \right) }\leq C^{\prime
}\left( \log \frac{1}{v}\right) ^{\frac{2(1+\alpha )}{\alpha }}:=\Lambda (v).
\end{equation*}%
As in the proof of Theorem \ref{Thm:upper-bound-hk}, the function $\gamma $
from (\ref{Equ:LambdaV-gamma_t}) has the expression 
\begin{equation*}
\gamma (t)=\exp \left( -\frac{C_{0}}{t^{\frac{\alpha }{2+\alpha }}}\right)
\end{equation*}%
for some $C_{0}>0.$ It is easy to verify that this function $\gamma $
satisfies property (\ref{Equ:D-property}). Hence by Theorem \ref%
{Thm:sup-lower-heat-kernel} we obtain the lower bound (\ref%
{Equ:sup-lower-heat-kernel}) for small values of $t$. The case of large
values of $t$ is treated similarly.
\end{proof}

\begin{acknowledgment}
We would like to thank Yuri Kondratiev for stimulating discussions that
motivated this work.
\end{acknowledgment}

\def\cprime{$'$}
\providecommand{\bysame}{\leavevmode\hbox to3em{\hrulefill}\thinspace}
\providecommand{\MR}{\relax\ifhmode\unskip\space\fi MR }
\providecommand{\MRhref}[2]{%
  \href{http://www.ams.org/mathscinet-getitem?mr=#1}{#2}
}
\providecommand{\href}[2]{#2}

\end{document}